\documentclass{article}

\usepackage[utf8]{inputenc}
\usepackage[margin=3cm]{geometry}
\usepackage[english]{babel}
\usepackage{verbatim}

\usepackage{enumerate}
\usepackage{graphicx}
\usepackage{appendix}
\usepackage{enumitem}
\setenumerate[1]{label={(\arabic*)}} 

\usepackage{lmodern} 
\usepackage{amsmath}
\usepackage{amssymb}
\usepackage{physics}
\usepackage{braket}

\usepackage[svgnames]{xcolor}
\usepackage{tikz}

\usepackage{hyphsubst}
\usepackage{mathrsfs}

\usepackage{amsthm}

\usepackage{algorithm}

\usepackage{algpseudocode}
\usepackage{setspace}
\usepackage{centernot}
\usepackage{listings}
\usepackage{cancel}
\usepackage{caption}

\usepackage{dsfont}
\usepackage{blindtext}
\usepackage{adjustbox}
\usepackage{textcomp}
\usepackage{ytableau}

\graphicspath{ {./images/} }
\usepackage[autostyle,italian=guillemets]{csquotes}
\usepackage[colorlinks=true,
            linkcolor=Purple,
            citecolor=red,
            urlcolor=black,
            hyperfootnotes=false]{hyperref}

\usepackage{titlesec}
\titleformat{\paragraph}
{\normalfont\normalsize\bfseries}{\theparagraph}{1em}{}
\titlespacing*{\paragraph}
{0pt}{3.25ex plus 1ex minus .2ex}{1.5ex plus .2ex}

\def\titlerunning#1{\gdef\titrun{#1}}
\makeatletter
\def\author#1{\gdef\autrun{\def\and{\unskip, }#1}\gdef\@author{#1}}
\def\address#1{{\def\and{\\\hspace*{18pt}}\renewcommand{\thefootnote}{}%
\footnote {#1}}%
\markboth{\autrun}{\titrun}}
\makeatother
\def\email#1{\hspace*{4pt}{\em e-mail}: #1}
\def\MSC#1{{\renewcommand{\thefootnote}{}%
\footnote{\emph{Mathematics Subject Classification (2020):} #1}}}
\def\keywords#1{\par\medskip
\noindent\textbf{Keywords:} #1}

\theoremstyle{definition}
\newtheorem{definition}{Definition}[section]
\newtheorem{proposition}[definition]{Proposition}
\newtheorem{example}[definition]{Example}
\newtheorem{remark}[definition]{Remark}

\newtheorem{theorem}[definition]{Theorem}
\newtheorem{corollary}[definition]{Corollary}
\newtheorem{lemma}[definition]{Lemma}
\newtheorem{conjecture}[definition]{Conjecture}
\newtheorem{construction}[definition]{Construction}
\lstdefinestyle{mystyle}{
    basicstyle=\ttfamily\footnotesize,
    breakatwhitespace=false,         
    breaklines=true,                 
    captionpos=b,                    
    keepspaces=true,                                  
    showspaces=false,                
    showstringspaces=false,
    showtabs=false,                  
    tabsize=3
}
\newcommand{\mc}[1]{\mathcal{#1}}
\renewcommand{\set}[1]{\mc{#1}_{sm,d,j}}
\newcommand{\setq}[4]{\mc{#1}_{#2,#3,#4}}
\DeclareMathOperator{\rk}{rk}

\DeclareMathOperator{\srk}{srk}

\newcommand*{\tran}{^{\mkern-1.5mu\mathsf{T}}}

\newcommand{\F}{\mathbb F}
\newcommand{\Fq}{\mathbb F_q}
\newcommand{\Fqm}{\mathbb F_{q^m}}

\newcommand{\num}[2]{\nu_{\min{}}(\mathcal{#1},#2)}

\titlerunning{}
\date{}
\title{On the Etzion-Silberstein conjecture for block Ferrers
diagrams}
\author{Marco Calderini \and Marta Messia \and Alessandro Neri}

\begin{document}

\maketitle

\address{
\,\,M. Calderini: Department of Mathematics, University of Trento, Trento, Italy; \email{marco.calderini@unitn.it}
\and 
M. Messia:  Department of Mathematics and Data Science, Vrije Universiteit Brussel Pleinlaan 2, 1050 Brussel, Belgium; \email{marta.messia@vub.be}
\and 
A. Neri: Department of Mathematics and Applications ``R. Caccioppoli'', University of Naples ``Federico II'', Naples, Italy; \email{alessandro.neri@unina.it}
}

\bigskip

\MSC{Primary 11T71; Secondary 94B05}
\begin{abstract}
 { Ferrers diagram rank-metric codes are rank-metric codes with prescribed support, and their dimension is bounded from above by the Etzion--Silberstein bound. In this paper, we study this problem for block Ferrers diagrams, namely Ferrers diagrams whose dots are grouped into square blocks of a fixed size. Motivated by the diagonal construction for MDS-constructible Ferrers diagrams, we introduce the notion of MSRD-constructibility, where MDS codes on diagonals are replaced by maximum sum-rank distance (MSRD) codes on block diagonals. We show that MSRD-constructible pairs yield optimal Ferrers diagram rank-metric codes over sufficiently large finite fields. We then relate MSRD-constructibility of a block Ferrers diagram to MDS-constructibility of its contraction, proving an equivalence when the distance is compatible with the block size and giving lifting criteria in the general case. As a consequence, we obtain MSRD-constructibility for strictly block-monotone and initially block-convex diagrams. Finally, we prove a reduction to block triangular diagrams and use it to obtain new arbitrary-field cases of the Etzion--Silberstein conjecture for MSRD-constructible block Ferrers diagrams.}
 \keywords{Rank-metric codes, block Ferrers diagrams, Etzion-Silberstein conjecture, MSRD-constructible diagram}
\end{abstract}
%\tableofcontents
\section{Introduction}

\subsubsection*{Context}
Rank-metric codes were introduced by Delsarte~\cite{bib:Delsarte} and later studied by Gabidulin~\cite{bib:Gabidulin}. Since then, they have become a central object in coding theory, partly because of their applications to network coding \cite{bib:silva}, crisscross error correction \cite{bib:roth},  and cryptography \cite{gabidulin1991ideals}. A particularly important class of rank-metric codes is given by maximum rank distance codes, or MRD codes, namely codes attaining the Singleton-like bound for the rank metric.

Ferrers diagram rank-metric codes arise naturally in the construction of constant-dimension subspace codes. In the multilevel construction proposed by Etzion and Silberstein~\cite{bib:etzion1}, one first fixes a Schubert cell in a Grassmannian. The pivot positions defining the Schubert cell determine a Ferrers diagram, and the matrices parameterizing the points in that cell are precisely matrices supported on this diagram. Thus, within each Schubert cell, the problem of constructing large subspace codes leads to the problem of constructing rank-metric codes with a prescribed Ferrers-diagram support. Given a Ferrers diagram $\mc F$, an $[\mc F,k,d]_q$ Ferrers diagram rank-metric code is a rank-metric code whose codewords are supported on $\mc F$. Etzion and Silberstein proved a Singleton-like upper bound for the dimension of such codes~\cite{bib:etzion1}. More precisely, if  the minimum rank distance is $d$, then the dimension of any $[\mc F,k,d]_q$ code is at most $\nu_{\min}(\mc F,d)$, a combinatorial value obtained from the diagram $\mc F$ by erasing $d-1$ between rows and columns. Thus, throughout the paper, $\nu_{\min}(\mc F,d)$ denotes precisely the Etzion--Silberstein upper bound. Etzion and Silberstein conjectured that this bound is always attainable: for every Ferrers diagram $\mc F$ of order $n$, every $1\le d\le n$, and every finite field $\Fq$, there exists an $[\mc F,\nu_{\min}(\mc F,d),d]_q$ maximum Ferrers diagram rank-metric code.

Despite substantial progress, the Etzion--Silberstein conjecture remains open in full generality. Several families of Ferrers diagrams are known to satisfy the conjecture; see, for instance,~\cite{bib:liu} for an overview. Two main approaches have proved especially useful. The first constructs optimal Ferrers diagram codes as subcodes of MRD codes. A systematic account of such MRD-subcode constructions can be found in~\cite{bib:liu2}. This approach completely settles the case $d=2$ and is closely related to reduction arguments that pass from known optimal diagrams to new ones via inclusions and puncturing. These reduction ideas already appear in~\cite{bib:antrobus} and have recently been studied further in~\cite{bib:beeloo}.

The second approach is based on placing MDS codes, in the Hamming metric, on the diagonals of the Ferrers diagram. This method leads to the notion of MDS-constructible pairs. Roughly speaking, a pair $(\mc F,d)$ is MDS-constructible if the Etzion--Silberstein bound $\nu_{\min}(\mc F,d)$ is attained by the dimension predicted by the diagonal MDS-code construction.
The idea was developed in the study of optimal Ferrers diagram codes in~\cite{bib:etzion} and was later refined in the work of Antrobus and Gluesing-Luerssen~\cite{bib:antrobus}. More recently, Neri and Stanojkovski proved that every MDS-constructible pair admits an optimal Ferrers diagram rank-metric code over every finite field~\cite{bib:neri}. Their proof combines the diagonal construction with MRD-subcode techniques, and also settles the conjecture for monotone and initially convex Ferrers diagrams.

The purpose of this paper is to investigate an analogous diagonal construction for block Ferrers diagrams. These are Ferrers diagrams whose dots are grouped into square blocks of a fixed size $m$. Block Ferrers diagrams already appear naturally in previous works on the Etzion--Silberstein conjecture. In particular, Neri and Stanojkovski studied block diagrams in the case where the block size is a prime power and related them to contractions of Ferrers diagrams over fields of suitable characteristic~\cite{bib:neri}. Further constructions of optimal Ferrers diagram rank-metric codes  for block diagrams  over sufficiently large fields  were obtained by Pratihar and Randrianarisoa using automorphisms of rational function fields~\cite{rakhi2023}. More recently, Liu obtained additional optimal codes from MRD codes, including some special results for block triangular Ferrers diagrams~\cite{bib:Liu2023} over small fields. These works show that block Ferrers diagrams form a natural and rich testing ground for the Etzion--Silberstein conjecture.

In the block setting, the diagonal MDS-code construction has a natural sum-rank analogue. Indeed, a block diagonal consists of several square matrix blocks, and the relevant metric on such a tuple of blocks is the sum-rank metric~\cite{bib:nobrega}. Therefore, the role of MDS codes is naturally replaced by that of maximum sum-rank distance codes, or MSRD codes~\cite{bib:martinez}. This leads to the notion of MSRD-constructibility for block Ferrers diagrams: an $m$-block Ferrers diagram $\mc F$ is MSRD-constructible for distance $d$ when the Etzion--Silberstein bound $\nu_{\min}(\mc F,d)$ coincides with the dimension obtained by placing MSRD codes on the block diagonals of $\mc F$.

\subsubsection*{Our contributions}

This paper starts from the observation that the diagonal construction for MDS-constructible Ferrers diagrams has a natural block analogue. In the classical setting, one places MDS codes on the diagonals of a Ferrers diagram. When the diagram is made of square blocks, however, a diagonal is no longer just a set of entries: it is a sequence of matrix blocks. The natural metric on such a sequence is the sum-rank metric, and so MDS codes should be replaced by MSRD codes.

We formalize this idea by introducing the notion of MSRD-constructibility for block Ferrers diagrams. We then show that this notion has the expected meaning: whenever the MSRD diagonal construction reaches the Etzion--Silberstein bound $\nu_{\min}(\mc F,d)$, it gives an optimal Ferrers diagram rank-metric code. This is the content of Lemma~\ref{le:dimMSRD} and Theorem~\ref{th:optMSRD}. In this sense, MSRD-constructibility is the block version of MDS-constructibility.

The next question is how this new notion is related to the old one. Every $m$-block Ferrers diagram has a contraction, obtained by replacing each full $m\times m$ block with a single dot. We prove that, when the distance is compatible with the block structure, MSRD-constructibility of the block diagram is exactly the same as MDS-constructibility of its contraction (Theorem~\ref{th:conndiv}). Thus, in this case, the block theory does not merely resemble the classical theory: it is equivalent to it after contraction.

When the distance is not aligned with the block size, the relation between a block diagram and its contraction becomes more delicate, since Singleton deletions may cut through individual blocks. We give criteria ensuring that MDS-constructibility of the contraction still lifts to MSRD-constructibility of the block expansion (Theorems~\ref{th:connection} and~\ref{th:connection2}), and use them to transfer natural families from the classical theory to the block setting. In particular, we prove that strictly $m$-monotone diagrams, and by adjunction initially $m$-convex diagrams, are MSRD-constructible for all relevant distances (Corollary~\ref{th:monoMSRD1}).

The final part of the paper addresses a second issue. The diagonal MSRD construction requires the existence of suitable MSRD codes, and therefore gives optimal Ferrers diagram codes only under a field-size assumption. To move beyond this restriction, we prove a reduction theorem inspired by the corresponding strategy for MDS-constructible diagrams. Namely, we show that the existence of optimal codes for an MSRD-constructible block Ferrers diagram can be reduced to the existence of optimal codes for a suitable block triangular diagram (Theorem~\ref{theo:tri}).

This reduction shifts the problem to block triangular diagrams, which play the same role here as triangular Ferrers diagrams in the classical MDS setting. We then construct optimal codes for block triangular diagrams in several extremal distance ranges (Theorem~\ref{th:triangular_conj}). Combining these constructions with the reduction theorem, we obtain new arbitrary-field cases of the Etzion--Silberstein conjecture for MSRD-constructible block Ferrers diagrams (Corollary~\ref{cor:triangular_construction}).

\subsubsection*{Outline}
The paper is organized as follows. In Section~\ref{sec:preliminaries} we recall the necessary background on Ferrers diagram rank-metric codes, MDS-constructible pairs, and
sum-rank-metric codes. In Section~\ref{sec:MSRD-contruction} we introduce MSRD-constructibility and
describe the MSRD diagonal construction.  Section~\ref{sec:MDSvsMSRD-constructibility} explores the relation
between MDS-constructibility of a Ferrers diagram and MSRD-constructibility of
its block expansion. Finally, in Section~\ref{sec:triangular_reduction} we prove a reduction theorem for
MSRD-constructible block diagrams and treat several families of block triangular
Ferrers diagrams, obtaining new optimal Ferrers diagram rank-metric codes.

\subsubsection*{Acknowledgments}
This research has been partially supported by the Italian National Group for Algebraic and Geometric Structures and their Applications (GNSAGA - INdAM) and by VUB-OZR project "Discrete Structures, and their applications in Data Science” (OZR3637)”.  
M. Calderini is supported by MUR-Italy via
PRIN 2022RFAZCJ ‘Algebraic methods in cryptanalysis’. A. Neri is supported by the INdAM - GNSAGA Project CUP E53C24001950001  ``Noncommutative polynomials in coding theory''. M. Messia is supported by FWO research project “Finite Geometry Applications to Coding Theory and Cryptography” (FWO grant nr. G0AJI25N).

\section{Preliminaries}\label{sec:preliminaries}

In this section, we introduce the preliminary notions and results that we will use throughout the paper. In particular, we give the necessary background on Ferrers diagrams, rank-metric codes, and sum-rank-metric codes.

\subsection{Ferrers diagram rank-metric codes}

Throughout the paper, the set of natural numbers $\mathbb N$ will be $\{1,2,3,\ldots\}$.
For a natural number $n$, we denote by $[n]$ the set $\{1,2,\ldots,n\}$ and we set $[0]=\emptyset$ by convention. We will also use $q$ to denote a prime power, and $\F_q$ is the finite field with $q$ elements. 

For every $m,n\in{\mathbb{N}}$, denote by $\mathbb{F}_q^{m\times{n}}$ the $\Fq$-vector  space of all $m\times{n}$ matrices with entries in $\mathbb{F}_q$. We can define the rank-metric on $\mathbb{F}_q^{m\times{n}}$ through the map
$$
\begin{array}{rccl}
d_{\rk}:&\mathbb{F}_q^{m\times{n}}\times \mathbb{F}_q^{m\times{n}}& \longrightarrow &\mathbb{Z}_{\ge0}, \\ & (A,B)&\longmapsto & \rk (A-B).
\end{array}
$$

\begin{definition}
An $[m\times{n},k,d]_q$ \textit{rank-metric code} is a $k$-dimensional linear subspace $\mathcal{C}$ of $\mathbb{F}_q^{m\times{n}}$ equipped with the rank distance. The parameter $d$ is the \emph{minimum rank distance} of $\mathcal{C}$, defined as $d=\min\{\,\rk(A)\,:\,A\in\mc{C},\,A\neq0\,\}$.
\end{definition}

As shown by Delsarte \cite{bib:Delsarte}, the parameters of a rank-metric code are related through the well-known Singleton-like bound:
$$
k \le \min\{m(n- d + 1), n(m- d + 1)\}. 
$$
Codes meeting the bound with equality are called \emph{maximum rank distance} (MRD) codes.

\begin{definition}
\label{def:ferr}
A (top-right justified) \textit{Ferrers diagram} $\mathcal{F}$ of order $n$ is a subset of $[n]^2$ with the following properties
\begin{enumerate}
\item if $(i,j)\in{\mathcal{F}}$ and $j<n$, then $(i,j+1)\in{\mathcal{F}}$,
\item if $(i,j)\in{\mathcal{F}}$ and $i>1$, then $(i-1,j)\in{\mathcal{F}}$.
\end{enumerate}
\end{definition}
Observe that we can graphically represent each diagram $\mathcal{F}$ as an $n\times n$ grid where we write a dot for every nonzero entry, as in the following example.

A Ferrers diagram $\mathcal{F}$ can be addressed by its columns using the notation $c_j=|\{\,(i,j)\,:\,1\le{i}\le{n},\,(i,j)\in{\mathcal{F}}\,\}|$, $j\in{[n]}$. Then, writing $\mathcal{F}=[c_1,\ldots,c_n]$, one gets a complete description of the diagram, given the number of dots in every column and the top-right aligned property.

\begin{example} The Ferrers diagram of order $5$, $\mc{F}=[1,2,3,3,5]$.
\begin{center}
  \begin{tikzpicture}[scale=0.5]
\draw[help lines, very thick, white, fill=blue!10] (0,1) -- (0,-4) -- (5,-4)--(5,1)--(0,1);

\draw[fill=black] (0.5,0.5) circle (0.1cm);
\draw[fill=black] (1.5,0.5) circle (0.1cm);
\draw[fill=black] (2.5,0.5) circle (0.1cm);
\draw[fill=black] (3.5,0.5) circle (0.1cm);
\draw[fill=black] (4.5,0.5) circle (0.1cm);

\draw[fill=black] (1.5,-0.5) circle (0.1cm);
\draw[fill=black] (2.5,-0.5) circle (0.1cm);
\draw[fill=black] (3.5,-0.5) circle (0.1cm);
\draw[fill=black] (4.5,-0.5) circle (0.1cm);

\draw[fill=black] (2.5,-1.5) circle (0.1cm);
\draw[fill=black] (3.5,-1.5) circle (0.1cm);
\draw[fill=black] (4.5,-1.5) circle (0.1cm);

\draw[fill=black] (4.5,-2.5) circle (0.1cm);
\draw[fill=black] (4.5,-3.5) circle (0.1cm);
\end{tikzpicture}    
\end{center}
 
\end{example}

For a square grid $[n]^2$, we introduce the following notation for denoting its rows, columns and diagonals.
\begin{enumerate}
    \item Rows: $ R_i:=\{\,(i,j)\in[n]^2\,:\,j\in[n]\,\},\quad i\in[n]$.
    \item Columns:
        $C_j:=\{\,(i,j)\in [n]^2\,:\,i\in[n]\,\},\quad j\in[n]$.

    \item Diagonals:
       $ D_i:=\{\,(l,n-i+l)\in [n]^2\,:\,1\leq{l}\leq{i}\,\},\quad i\in[n]$.
\end{enumerate}
Therefore, for a Ferrers diagram $\mc{F}$ of order $n$, its rows, columns and upper-diagonals are given by $R_i\cap\mc{F}$, $C_i\cap\mc{F}$ and  $D_i\cap\mc{F}$, respectively, for $i\in [n]$.
\begin{example}
The Ferrers diagram $\mc{F}=[1,2,3,4,4,6]$ of order $6$ with its rows, columns and diagonals highlighted.
\begin{center}
\begin{tikzpicture}[scale=0.42]
\draw[help lines, very thick, white, fill=blue!10] (1,1) -- (1,-5) -- (7,-5)--(7,1)--(1,1);

\draw[black](1.5,0.75)--(1.5,0.25);
\draw[black](2.5,0.75)--(2.5,-0.75);
\draw[black](3.5,0.75)--(3.5,-1.75);
\draw[black](4.5,0.75)--(4.5,-2.75);
\draw[black](5.5,0.75)--(5.5,-2.75);
\draw[black](6.5,0.75)--(6.5,-4.75);

\draw[fill=black] (1.5,0.5) circle (0.1cm)node[black,above,yshift=5pt]{C$_1$};
\draw[fill=black] (2.5,0.5) circle (0.1cm)node[black,above,yshift=5pt]{C$_2$};
\draw[fill=black] (3.5,0.5) circle (0.1cm)node[black,above,yshift=5pt]{C$_3$};
\draw[fill=black] (4.5,0.5) circle (0.1cm)node[black,above,yshift=5pt]{C$_4$};
\draw[fill=black] (5.5,0.5) circle (0.1cm)node[black,above,yshift=5pt]{C$_5$};
\draw[fill=black] (6.5,0.5) circle (0.1cm)node[black,above,yshift=5pt]{C$_6$};

\draw[fill=black] (2.5,-0.5) circle (0.1cm);
\draw[fill=black] (3.5,-0.5) circle (0.1cm);
\draw[fill=black] (4.5,-0.5) circle (0.1cm);
\draw[fill=black] (5.5,-0.5) circle (0.1cm);
\draw[fill=black] (6.5,-0.5) circle (0.1cm);

\draw[fill=black] (3.5,-1.5) circle (0.1cm);
\draw[fill=black] (4.5,-1.5) circle (0.1cm);
\draw[fill=black] (5.5,-1.5) circle (0.1cm);
\draw[fill=black] (6.5,-1.5) circle (0.1cm);

\draw[fill=black] (4.5,-2.5) circle (0.1cm);
\draw[fill=black] (5.5,-2.5) circle (0.1cm);
\draw[fill=black] (6.5,-2.5) circle (0.1cm);

\draw[fill=black] (6.5,-3.5) circle (0.1cm);

\draw[fill=black] (6.5,-4.5) circle (0.1cm)node[white,below right,xshift=5pt]{D$_6$};
\end{tikzpicture}
\begin{tikzpicture}[scale=0.42]
\draw[help lines, very thick, white, fill=blue!10] (1,1) -- (1,-5) -- (7,-5)--(7,1)--(1,1);

\draw[black](1.25,0.5)--(6.75,0.5);
\draw[black](2.25,-0.5)--(6.75,-0.5);
\draw[black](3.25,-1.5)--(6.75,-1.5);
\draw[black](4.25,-2.5)--(6.75,-2.5);
\draw[black](6.25,-3.5)--(6.75,-3.5);
\draw[black](6.25,-4.5)--(6.75,-4.5);

\draw[fill=black] (1.5,0.5) circle (0.1cm);
\draw[fill=black] (2.5,0.5) circle (0.1cm);
\draw[fill=black] (3.5,0.5) circle (0.1cm);
\draw[fill=black] (4.5,0.5) circle (0.1cm);
\draw[fill=black] (5.5,0.5) circle (0.1cm);
\draw[fill=black] (6.5,0.5) circle (0.1cm)node[right,xshift=5pt]{R$_1$};

\draw[fill=black] (2.5,-0.5) circle (0.1cm);
\draw[fill=black] (3.5,-0.5) circle (0.1cm);
\draw[fill=black] (4.5,-0.5) circle (0.1cm);
\draw[fill=black] (5.5,-0.5) circle (0.1cm);
\draw[fill=black] (6.5,-0.5) circle (0.1cm)node[right,xshift=5pt]{R$_2$};

\draw[fill=black] (3.5,-1.5) circle (0.1cm);
\draw[fill=black] (4.5,-1.5) circle (0.1cm);
\draw[fill=black] (5.5,-1.5) circle (0.1cm);
\draw[fill=black] (6.5,-1.5) circle (0.1cm)node[right,xshift=5pt]{R$_3$};

\draw[fill=black] (4.5,-2.5) circle (0.1cm);
\draw[fill=black] (5.5,-2.5) circle (0.1cm);
\draw[fill=black] (6.5,-2.5) circle (0.1cm)node[right,xshift=5pt]{R$_4$};

\draw[fill=black] (6.5,-3.5) circle (0.1cm)node[right,xshift=5pt]{R$_5$};

\draw[fill=black] (6.5,-4.5) circle (0.1cm)node[right,xshift=5pt]{R$_6$}node[white,below right,xshift=5pt]{D$_6$};
\end{tikzpicture}
\begin{tikzpicture}[scale=0.42]
\draw[help lines, very thick, white, fill=blue!10] (1,1) -- (1,-5) -- (7,-5)--(7,1)--(1,1);

\draw[fill=black] (1.5,0.5) circle (0.1cm);
\draw[fill=black] (2.5,0.5) circle (0.1cm);
\draw[fill=black] (3.5,0.5) circle (0.1cm);
\draw[fill=black] (4.5,0.5) circle (0.1cm);
\draw[fill=black] (5.5,0.5) circle (0.1cm);
\draw[fill=black] (6.5,0.5) circle (0.1cm)node[below right,xshift=5pt]{D$_1$};

\draw[fill=black] (2.5,-0.5) circle (0.1cm);
\draw[fill=black] (3.5,-0.5) circle (0.1cm);
\draw[fill=black] (4.5,-0.5) circle (0.1cm);
\draw[fill=black] (5.5,-0.5) circle (0.1cm);
\draw[fill=black] (6.5,-0.5) circle (0.1cm)node[below right,xshift=5pt]{D$_2$};

\draw[fill=black] (3.5,-1.5) circle (0.1cm);
\draw[fill=black] (4.5,-1.5) circle (0.1cm);
\draw[fill=black] (5.5,-1.5) circle (0.1cm);
\draw[fill=black] (6.5,-1.5) circle (0.1cm)node[below right,xshift=5pt]{D$_3$};

\draw[fill=black] (4.5,-2.5) circle (0.1cm);
\draw[fill=black] (5.5,-2.5) circle (0.1cm);
\draw[fill=black] (6.5,-2.5) circle (0.1cm)node[below right,xshift=5pt]{D$_4$};

\draw[fill=black] (6.5,-3.5) circle (0.1cm)node[below right,xshift=5pt]{D$_5$};

\draw[fill=black] (6.5,-4.5) circle (0.1cm)node[below right,xshift=5pt]{D$_6$};

\draw[black] (1.25,0.75)--(6.75,-4.75);
\draw[black] (2.25,0.75)--(6.75,-3.75);
\draw[black] (3.25,0.75)--(6.75,-2.75);
\draw[black] (4.25,0.75)--(6.75,-1.75);
\draw[black] (5.25,0.75)--(6.75,-0.75);
\draw[black] (6.25,0.75)--(6.75,0.25);
\end{tikzpicture}
\end{center}

\end{example}

We now introduce the notion of adjoint of a Ferrers diagram, which corresponds to reflection across the antidiagonal.

\begin{definition}
Let $\mathcal{F}$ be a Ferrers diagram of order $n$. The \textit{adjoint} Ferrers diagram $\mathcal{F}^{\tran}$ is defined as
\begin{equation*}
\mathcal{F}^{\tran}=\{\,(n+1-j,n+1-i) \,:\,(i,j)\in{\mathcal{F}}\,\}.
\end{equation*}
\end{definition}

For a given Ferrers diagram $\mc{F} = [c_1, \ldots, c_n]$ of order $n$, we denote by $\mathbb{F}_q^{\mc{F}}$ the space of $n\times n$
matrices over $\mathbb{F}_q$ supported on $\mc{F}$, that is,
$$
\mathbb{F}_q^{\mc{F}}=\{M = (m_{i,j}) \in \mathbb{F}_q^{n\times n} \,:\, (i,j)\notin\mc{F}\text{ implies }m_{i,j} = 0\}.
$$

\begin{definition}
Let $\mathcal{F}$ be a Ferrers diagram of order $n$. An $[\mathcal{F},k,d]_q$ \textit{Ferrers diagram rank-metric code} is a $k$-dimensional linear subspace $\mathcal{C}$ of $\mathbb{F}_q^{\mathcal{F}}$, endowed with the rank distance. The parameter $d$ is the \textit{minimum rank distance} of $\mathcal{C}$, defined as
\begin{equation*}
d:=\min\{\,\rk(A) \,:\,A\in{\mathcal{C}}, A\ne{0}\,\}.
\end{equation*}
\end{definition}

\begin{definition}
Let $\mathcal{F}=[c_1,\ldots,c_n]$ be a Ferrers diagram of order $n$ and let $d\in[n]$. For $j\in{\{\,0,\ldots,d-1\,\}}$ we define
\begin{equation*}
\nu_j(\mathcal{F},d)=\sum_{i=1}^{n-j}\max\{0,\,c_i-d+1+j\}.
\end{equation*}
Furthermore, we set $\nu_{\min}(\mathcal{F},d)=\min\{\nu_0(\mc{F},d),\ldots,\nu_{d-1}(\mc{F},d)\}$.
\end{definition}

Graphically speaking, $\nu_j(\mc{F},d)$ counts the dots that remain in the Ferrers diagram after removing the first $d-j-1$ rows and the last $j$ columns, as we can see in the following small example.

\begin{example}
\label{ex:quattro}
Consider $\mc{F}=[1,2,3,4,4,6]$ and $d=4$. Here we have in red the dots that contribute to each $\nu_j(\mc{F},d)$ and the removed rows and columns are dashed.
\begin{center}

\begin{tikzpicture}[scale=0.42]
\draw[help lines, very thick, white, fill=blue!10] (1,1) -- (1,-5) -- (7,-5)--(7,1)--(1,1);

% \draw[black,dashed](1.5,0.75)--(1.5,-0.75);
% \draw[black,dashed](2.5,0.75)--(2.5,-0.75);
% \draw[black,dashed](3.5,0.75)--(3.5,-1.75);
% \draw[black,dashed](4.5,0.75)--(4.5,-2.75);
% \draw[black,dashed](5.5,0.75)--(5.5,-2.75);
% \draw[black,dashed](6.5,0.75)--(6.5,-4.75);

\draw[black,dashed](1.25,0.5)--(6.75,0.5);
\draw[black,dashed](1.25,-0.5)--(6.75,-0.5);
\draw[black,dashed](1.25,-1.5)--(6.75,-1.5);
% \draw[black,dashed](4.25,-2.5)--(6.75,-2.5);
% \draw[black,dashed](6.25,-3.5)--(6.75,-3.5);
% \draw[black,dashed](6.25,-4.5)--(6.75,-4.5);

\draw[fill=black] (1.5,0.5) circle (0.1cm);
\draw[fill=black] (2.5,0.5) circle (0.1cm);
\draw[fill=black] (3.5,0.5) circle (0.1cm);
\draw[fill=black] (4.5,0.5) circle (0.1cm);
\draw[fill=black] (5.5,0.5) circle (0.1cm);
\draw[fill=black] (6.5,0.5) circle (0.1cm);

\draw[fill=black] (2.5,-0.5) circle (0.1cm);
\draw[fill=black] (3.5,-0.5) circle (0.1cm);
\draw[fill=black] (4.5,-0.5) circle (0.1cm);
\draw[fill=black] (5.5,-0.5) circle (0.1cm);
\draw[fill=black] (6.5,-0.5) circle (0.1cm);

\draw[fill=black] (3.5,-1.5) circle (0.1cm);
\draw[fill=black] (4.5,-1.5) circle (0.1cm);
\draw[fill=black] (5.5,-1.5) circle (0.1cm);
\draw[fill=black] (6.5,-1.5) circle (0.1cm);

\draw[red,fill=red] (4.5,-2.5) circle (0.1cm);
\draw[red,fill=red] (5.5,-2.5) circle (0.1cm);
\draw[red,fill=red] (6.5,-2.5) circle (0.1cm);

\draw[red,fill=red] (6.5,-3.5) circle (0.1cm);

\draw[red,fill=red] (6.5,-4.5) circle (0.1cm);
\end{tikzpicture}\quad
\begin{tikzpicture}[scale=0.42]
\draw[help lines, very thick, white, fill=blue!10] (1,1) -- (1,-5) -- (7,-5)--(7,1)--(1,1);

% \draw[black,dashed](1.5,0.75)--(1.5,-0.75);
% \draw[black,dashed](2.5,0.75)--(2.5,-0.75);
% \draw[black,dashed](3.5,0.75)--(3.5,-1.75);
% \draw[black,dashed](4.5,0.75)--(4.5,-2.75);
% \draw[black,dashed](5.5,0.75)--(5.5,-2.75);
\draw[black,dashed](6.5,0.75)--(6.5,-4.75);

\draw[black,dashed](1.25,0.5)--(6.75,0.5);
\draw[black,dashed](1.25,-0.5)--(6.75,-0.5);
% \draw[black,dashed](3.25,-1.5)--(6.75,-1.5);
% \draw[black,dashed](4.25,-2.5)--(6.75,-2.5);
% \draw[black,dashed](6.25,-3.5)--(6.75,-3.5);
% \draw[black,dashed](6.25,-4.5)--(6.75,-4.5);

\draw[fill=black] (1.5,0.5) circle (0.1cm);
\draw[fill=black] (2.5,0.5) circle (0.1cm);
\draw[fill=black] (3.5,0.5) circle (0.1cm);
\draw[fill=black] (4.5,0.5) circle (0.1cm);
\draw[fill=black] (5.5,0.5) circle (0.1cm);
\draw[fill=black] (6.5,0.5) circle (0.1cm);

\draw[fill=black] (2.5,-0.5) circle (0.1cm);
\draw[fill=black] (3.5,-0.5) circle (0.1cm);
\draw[fill=black] (4.5,-0.5) circle (0.1cm);
\draw[fill=black] (5.5,-0.5) circle (0.1cm);
\draw[fill=black] (6.5,-0.5) circle (0.1cm);

\draw[red,fill=red] (3.5,-1.5) circle (0.1cm);
\draw[red,fill=red] (4.5,-1.5) circle (0.1cm);
\draw[red,fill=red] (5.5,-1.5) circle (0.1cm);
\draw[fill=black] (6.5,-1.5) circle (0.1cm);

\draw[red,fill=red] (4.5,-2.5) circle (0.1cm);
\draw[red,fill=red] (5.5,-2.5) circle (0.1cm);
\draw[fill=black] (6.5,-2.5) circle (0.1cm);

\draw[fill=black] (6.5,-3.5) circle (0.1cm);

\draw[fill=black] (6.5,-4.5) circle (0.1cm);
\end{tikzpicture}\quad
\begin{tikzpicture}[scale=0.42]
\draw[help lines, very thick, white, fill=blue!10] (1,1) -- (1,-5) -- (7,-5)--(7,1)--(1,1);

% \draw[black,dashed](1.5,0.75)--(1.5,-0.75);
% \draw[black,dashed](2.5,0.75)--(2.5,-0.75);
% \draw[black,dashed](3.5,0.75)--(3.5,-1.75);
% \draw[black,dashed](4.5,0.75)--(4.5,-2.75);
\draw[black,dashed](5.5,0.75)--(5.5,-4.75);
\draw[black,dashed](6.5,0.75)--(6.5,-4.75);

\draw[black,dashed](1.25,0.5)--(6.75,0.5);
% \draw[black,dashed](1.25,-0.5)--(6.75,-0.5);
% \draw[black,dashed](3.25,-1.5)--(6.75,-1.5);
% \draw[black,dashed](4.25,-2.5)--(6.75,-2.5);
% \draw[black,dashed](6.25,-3.5)--(6.75,-3.5);
% \draw[black,dashed](6.25,-4.5)--(6.75,-4.5);

\draw[fill=black] (1.5,0.5) circle (0.1cm);
\draw[fill=black] (2.5,0.5) circle (0.1cm);
\draw[fill=black] (3.5,0.5) circle (0.1cm);
\draw[fill=black] (4.5,0.5) circle (0.1cm);
\draw[fill=black] (5.5,0.5) circle (0.1cm);
\draw[fill=black] (6.5,0.5) circle (0.1cm);

\draw[red,fill=red] (2.5,-0.5) circle (0.1cm);
\draw[red,fill=red] (3.5,-0.5) circle (0.1cm);
\draw[red,fill=red] (4.5,-0.5) circle (0.1cm);
\draw[fill=black] (5.5,-0.5) circle (0.1cm);
\draw[fill=black] (6.5,-0.5) circle (0.1cm);

\draw[red,fill=red] (3.5,-1.5) circle (0.1cm);
\draw[red,fill=red] (4.5,-1.5) circle (0.1cm);
\draw[fill=black] (5.5,-1.5) circle (0.1cm);
\draw[fill=black] (6.5,-1.5) circle (0.1cm);

\draw[red,fill=red] (4.5,-2.5) circle (0.1cm);
\draw[fill=black] (5.5,-2.5) circle (0.1cm);
\draw[fill=black] (6.5,-2.5) circle (0.1cm);

\draw[fill=black] (6.5,-3.5) circle (0.1cm);

\draw[fill=black] (6.5,-4.5) circle (0.1cm);
\end{tikzpicture}\quad
\begin{tikzpicture}[scale=0.42]
\draw[help lines, very thick, white, fill=blue!10] (1,1) -- (1,-5) -- (7,-5)--(7,1)--(1,1);

% \draw[black,dashed](1.5,0.75)--(1.5,-0.75);
% \draw[black,dashed](2.5,0.75)--(2.5,-0.75);
% \draw[black,dashed](3.5,0.75)--(3.5,-1.75);
\draw[black,dashed](4.5,0.75)--(4.5,-4.75);
\draw[black,dashed](5.5,0.75)--(5.5,-4.75);
\draw[black,dashed](6.5,0.75)--(6.5,-4.75);

\draw[red,fill=red] (1.5,0.5) circle (0.1cm);
\draw[red,fill=red] (2.5,0.5) circle (0.1cm);
\draw[red,fill=red] (3.5,0.5) circle (0.1cm);
\draw[fill=black] (4.5,0.5) circle (0.1cm);
\draw[fill=black] (5.5,0.5) circle (0.1cm);
\draw[fill=black] (6.5,0.5) circle (0.1cm);

\draw[red,fill=red] (2.5,-0.5) circle (0.1cm);
\draw[red,fill=red] (3.5,-0.5) circle (0.1cm);
\draw[fill=black] (4.5,-0.5) circle (0.1cm);
\draw[fill=black] (5.5,-0.5) circle (0.1cm);
\draw[fill=black] (6.5,-0.5) circle (0.1cm);

\draw[red,fill=red] (3.5,-1.5) circle (0.1cm);
\draw[fill=black] (4.5,-1.5) circle (0.1cm);
\draw[fill=black] (5.5,-1.5) circle (0.1cm);
\draw[fill=black] (6.5,-1.5) circle (0.1cm);

\draw[fill=black] (4.5,-2.5) circle (0.1cm);
\draw[fill=black] (5.5,-2.5) circle (0.1cm);
\draw[fill=black] (6.5,-2.5) circle (0.1cm);

\draw[fill=black] (6.5,-3.5) circle (0.1cm);

\draw[fill=black] (6.5,-4.5) circle (0.1cm);
\end{tikzpicture}

$\nu_0(\mc{F},4)=5 \qquad\quad \nu_1(\mc{F},4)=5\qquad\quad \nu_2(\mc{F},4)=6 \qquad\quad\nu_3(\mc{F},4)=6$
\end{center}
\end{example}

\begin{definition}
Let $\mc{F}=[c_1,\ldots,c_n]$ be a Ferrers diagram of order $n$, let $d\in[n]$ and let $j\in\{\,0,\ldots,d-1\,\}$. The pair $(\mc{F},d)$ is called $j$\textit{-Singleton} if
\begin{equation*}
    \num{F}{d}=\nu_j(\mathcal{F},d).
\end{equation*}
\end{definition}
In words, a pair $(\mc{F},d)$ is $j$-Singleton if to obtain $\num{F}{d}$ the last $j$ columns and the first $d-j-1$ rows of the diagram are removed. It is easy to see that there are pairs $(\mc{F},d)$ that are $j$-Singleton for multiple values of $j$.

The upper-triangular Ferrers diagram of order $n$ will be denoted by
\begin{equation*}
\mc{T}_n=\{\,(i,j)\,:\,i,j\in[n],i\le j\,\}.
\end{equation*}
For the diagram $\mc{T}_n$ we can note that 
$$
\nu_{\min}(\mc{T}_n,d)=\sum_{i=1}^{n-d+1}i=\binom{n-d+2}{2}=\frac{(n-d+1)(n-d+2)}{2}.
$$
Moreover, $(\mc{T}_n,d)$ is $j$-Singleton for any $j\in\{0,\ldots,d-1\}$.

\begin{definition}
Let $n\in\mathbb{N}$, $d\in[n]$ and $j\in\{\,0,\ldots,d-1\,\}$. We define the following sets
\begin{align*}
\mc{S}_{n,d,j}&=\{\,(i,l)\in [n]^2\,:\,i\in\{\,d-j,\ldots,n\,\},l\in[n-j]\,\},\\
\mc{T}_{n,d,j}&=\mc{S}_{n,d,j}\cap\mc{T}_{n},\\
\mc{L}_{n,d,j}&=[n]^2\setminus{\mc{S}_{n,d,j}}.
\end{align*}  
\end{definition}
Observe that, graphically speaking, $\mc{S}_{n,d,j}$ is the set of dots of $[n]^2$ that remain after deleting the last $j$ columns and the first $d-1-j$ rows. Therefore, if $\mc{F}$ is a Ferrers diagram of order $n$, then we have
\begin{equation*}
\nu_j(\mc{F},d)=\sum_{i=1}^{n-j}\max{\{\,0,c_i-d+1+j\,\}}=|\mc{F}\cap\mc{S}_{n,d,j}|=|\mc{F}|-|\mc{F}\cap\mc{L}_{n,d,j}|,
\end{equation*}
and hence
\begin{align*}
\num{F}{d}&=\min_{j\in\{\,0,\ldots,d-1\,\}}{\nu_j(\mc{F},d)}=\min_{j\in\{\,0,\ldots,d-1\,\}}|\mc{F}\cap\mc{S}_{n,d,j}|.
\end{align*}

The following bound on the parameters of a Ferrers diagram rank-metric code was proved by Etzion and Silberstein in \cite{bib:etzion1}.
\begin{theorem}[{see \cite[Theorem 1]{bib:etzion1}}]
\label{th:dimension}
Let $\mc{F}$ be a Ferrers diagram of order $n$ and let $\mathcal{C}$ be an $[\mathcal{F},{k},d]_q$-code. Then
\begin{equation*}
{k}\leq{\nu_{\min}(\mathcal{F},d)}.
\end{equation*}
\end{theorem}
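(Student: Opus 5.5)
Fix $j\in\{0,\ldots,d-1\}$. We will show that $k\le \nu_j(\mc F,d)=|\mc F\cap\mc S_{n,d,j}|$; taking the minimum over $j$ then gives the theorem. The approach is the standard puncturing (erasure) argument behind every Singleton-type bound. We delete from every codeword the entries lying in $\mc L_{n,d,j}$, that is, the first $d-1-j$ rows and the last $j$ columns, and we show that this deletion map is injective on $\mathcal C$.

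Concretely, consider the $\Fq$-linear map
$$\pi_j:\mathcal C\longrightarrow \Fq^{\mc F\cap\mc S_{n,d,j}},$$
which sends a matrix $A\in\mathcal C$ to the restriction of its entries to the positions of $\mc F\cap\mc S_{n,d,j}$. Because every $A\in\mathcal C$ is supported on $\mc F$, the codomain has dimension exactly $|\mc F\cap\mc S_{n,d,j}|=\nu_j(\mc F,d)$, by the identity recorded just before the theorem. So it suffices to prove that $\ker\pi_j=0$.

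Let $A\in\ker\pi_j$. Then every nonzero entry of $A$ lies in $\mc L_{n,d,j}$, so it lies either in one of the first $d-1-j$ rows or in one of the last $j$ columns. Write $A=A_1+A_2$, where $A_1$ is supported on the first $d-1-j$ rows and $A_2$ is supported on the last $j$ columns; entries in the overlap can be assigned to either part. Since $A_1$ has at most $d-1-j$ nonzero rows, $\rk(A_1)\le d-1-j$. Since $A_2$ has at most $j$ nonzero columns, $\rk(A_2)\le j$. By subadditivity of rank,
$$\rk(A)\le (d-1-j)+j=d-1<d.$$
The minimum distance of $\mathcal C$ is $d$, so $A=0$. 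Hence $\pi_j$ is injective, and $k=\dim\mathcal C\le\nu_j(\mc F,d)$.

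No step here is a real obstacle. The only points needing care are that the codomain dimension is exactly $\nu_j$, which uses the support condition together with the formula $\nu_j(\mc F,d)=|\mc F\cap\mc S_{n,d,j}|$, and that the rank estimate handles rows and columns separately through $A=A_1+A_2$. The Ferrers shape plays no role beyond fixing the support.
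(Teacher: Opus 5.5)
Your proof is correct: a codeword that vanishes on $\mathcal{F}\cap\mathcal{S}_{n,d,j}$ is supported on $d-1-j$ rows and $j$ columns, so it has rank at most $d-1$ and must be zero, which makes the restriction map injective and gives $k\le\nu_j(\mathcal{F},d)$ for every $j$. The paper does not prove this statement itself but cites it from Etzion--Silberstein, and your puncturing argument is essentially the standard proof given there.
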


As with rank-metric codes and Hamming metric codes, we define an optimal Ferrers diagram rank-metric code as one that meets the bound with equality.
\begin{definition}
Let $\mc{F}$ be a Ferrers diagram of order $n$. An $[\mathcal{F},{k},d]_q$-code $\mathcal{C}$ is called \emph{maximum
Ferrers diagram} (MFD) code if ${k}=\nu_{\min}(\mathcal{F},d)$.
\end{definition}

Etzion and Silberstein stated in \cite{bib:etzion1} the following conjecture.
\begin{conjecture}[{see \cite[Conjecture 1]{bib:etzion1}}]
\label{con:etzsilb}
For every Ferrers diagram $\mathcal{F}$ of order $n$, every integer $1\leq{d}\leq n$ and every finite field $\mathbb{F}_q$ there exists an  $[\mathcal{F},{\nu_{\min}(\mathcal{F},d)},d]_q$ MFD code.
\end{conjecture}

Although Conjecture \ref{con:etzsilb} remains open in its full generality, it has been verified in several particular cases; see e.g. \cite{bib:liu} for an overview. Two main approaches have been developed: constructing MFD codes as subcodes of MRD codes, and employing the concept of MDS-constructibility. A comprehensive summary of the MRD-subcode constructions can be found in \cite{bib:liu2}. Notably, the MRD-based approach completely settles the case where $d=2$. The main idea of this MRD-subcode construction relies on a reduction argument, that is, constructing MFD codes on new diagrams starting from existing MFD codes on other diagrams, via inclusion or puncturing. This idea is based on a simple observation originally stated in \cite{bib:antrobus}, which has been deeply studied in \cite{bib:beeloo}.

The second approach relies on the theory of MDS codes in the Hamming metric. When first introduced, this method enabled the proof of Conjecture \ref{con:etzsilb} over sufficiently large finite fields and for Ferrers diagrams satisfying the MDS-constructibility property (see Definition \ref{def:MDScon}). More recently, in  \cite{bib:neri}  the conjecture was proved to hold for all MDS-constructible Ferrers diagrams over arbitrary finite fields.
To eliminate the dependency on field size, the authors combined their method with MRD-subcode constructions.

\subsection{MDS-constructible Ferrers diagrams}

In this section, we give a brief overview of MDS-constructible pairs. Roughly speaking, these are pairs $(\mathcal F,d)$ such that $\mathcal F$ is a Ferrers diagram over which one can construct $[\mathcal{F},{\nu_{\min}(\mathcal{F},d)},d]_q$ MFD codes using MDS codes in the Hamming metric on the diagonals of $\mathcal F$.
Here, we give the formal definition of MDS-constructible pairs.

\begin{definition}\label{def:MDScon}
    Let $\mc{F}$ be a Ferrers diagram of order $n$ and $d\in[n]$ an integer. The pair $(\mathcal{F},d)$ is said to be \textit{MDS-constructible} if
    \begin{equation*}
        \nu_{\min}(\mathcal{F},d)=\sum_{i=1}^n\max\{\,|D_i\cap\mc{F}|-d+1,0\,\}.
    \end{equation*}
\end{definition}

\begin{definition}
A Ferrers diagram $\mc{F}=[c_1,\ldots, c_n]$ is called 
\textit{monotone} if, whenever
$i \in [n- 1]$ is such that $0 < c_i < n$, one has $c_{i+1} > c_i$. We say that $\mc{F}$ is
\textit{strictly monotone} if, whenever $i\in[n-1]$ and $c_i>0$, one has $c_{i+1}>c_i$. 
\end{definition}
\begin{definition}
A Ferrers diagram $\mc{F}=[c_1,\ldots, c_n]$ is called \textit{convex} 
if $c_{i+1}- c_i \le 1$ for every $i\in[n-1]$. We say that $\mc{F}$ is
\textit{initially convex} if it is convex and $c_1\leq 1$.
\end{definition}
\begin{remark}
    The reader can easily verify that the empty diagram $\mc{F}= \emptyset$ and the full diagram $[n]^2$ are both monotone and convex. Moreover, convex Ferrers diagrams  and monotone Ferrers diagrams are adjoint of each others. 
\end{remark}
\begin{remark}
The empty diagram is clearly both strictly monotone and initially convex; the same can be said for triangular diagrams, but the full diagram $[n]^2$ is not strictly monotone nor initially convex. Furthermore, the adjoint of a strictly monotone diagram is initially convex and vice versa.
\end{remark}

\begin{theorem}[{see \cite[Theorem 4.10]{bib:neri}}]
\label{th:incon}
   % Let $d,n$ be positive integers with $1\leq d\leq n$. Then 
    The following statements hold.
    \begin{enumerate}
        \item Conjecture \ref{con:etzsilb} holds true for strictly monotone Ferrers diagrams %of order $n$ 
        over any finite field. 
        \item Conjecture \ref{con:etzsilb} holds true for initially convex Ferrers diagrams %of order $n$ 
        over any finite field.
    \end{enumerate}
\end{theorem}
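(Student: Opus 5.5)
This result is cited from \cite[Theorem 4.10]{bib:neri}, so the plan reconstructs that argument. It has two main ingredients.

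\textbf{Reduction to part (1).} I would first obtain (2) from (1) by adjunction. The map $M\mapsto (JMJ)\tran$, where $J$ is the antidiagonal permutation matrix, is an $\Fq$-linear isometry for the rank metric. It sends $\Fq^{\mc F}$ onto $\Fq^{\mc F\tran}$. Under adjunction the quantity $\nu_j(\mc F,d)$ becomes $\nu_{d-1-j}(\mc F\tran,d)$, because deleting rows is exchanged with deleting columns. Hence $\nu_{\min}$ is preserved, and MFD codes for $(\mc F,d)$ and $(\mc F\tran,d)$ correspond. The adjoint of an initially convex diagram is strictly monotone (by the Remark above), so it suffices to prove (1).

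\textbf{Step 1: strictly monotone pairs are MDS-constructible.} Write $\mc F=[c_1,\dots,c_n]$, with nonzero entries strictly increasing. Then every nonempty diagonal $D_i\cap\mc F$ has a simple shape. Its length is governed by how far the ``staircase'' lies below the main antidiagonal structure, and moving from column $l$ to column $l+1$ changes $|D_i\cap\mc F|$ in a controlled way. I would take $j$ to be the index at which the minimum of $\nu_j$ is attained. I would then show that deleting the first $d-1-j$ rows and last $j$ columns removes exactly $\min\{d-1,|D_i\cap\mc F|\}$ dots from each diagonal $D_i$. Strict monotonicity is what guarantees that no diagonal loses fewer dots than this. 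Combined with the general inequality $\nu_{\min}\ge\sum_i\max\{|D_i\cap \mc F|-d+1,0\}$, this gives equality, i.e.\ MDS-constructibility. This is a counting argument on the diagram, done with care.

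\textbf{Step 2: MDS-constructible pairs admit MFD codes over every $\Fq$.} This is the main obstacle, since placing MDS codes on the diagonals only works when $q$ is at least the diagonal length. My plan is an induction or reduction to triangular diagrams. First I would show that an MDS-constructible $(\mc F,d)$ can be decomposed, by puncturing and inclusion arguments in the style of \cite{bib:antrobus}, so that an MFD code for a suitable triangular diagram $\mc T_r$ with the same $d$ yields one for $\mc F$. The idea is that the diagonals of length $\ge d$ fit inside a triangle, and the rest can be completed by a direct sum with a zero-padded part. Second, for $\mc T_r$ itself I would take an MRD (Gabidulin) code in $\Fq^{r\times r}$ of minimum distance $d$. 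I would intersect it with $\Fq^{\mc T_r}$ and use a systematic-form argument on the linearized polynomials, degree by degree along the diagonals, to show the intersection has dimension $\binom{r-d+2}{2}=\nu_{\min}(\mc T_r,d)$. The hard part will be verifying this dimension count, namely that the linear conditions imposed by vanishing below the diagonal are independent. I would attempt it by writing the code in a $q$-Vandermonde-type basis adapted to the triangular support.
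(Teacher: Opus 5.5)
\textbf{Where your argument stands.} The adjunction step reducing (2) to (1) is correct. Step~1 is the known fact \cite[Proposition 4.15]{bib:neri}, which the paper quotes before Corollary~\ref{th:monoMSRD1}. The first half of Step~2 is the reduction to triangular diagrams that the paper reproduces in block form as Theorem~\ref{theo:tri}.

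But $\mc T_r$ is itself strictly monotone, so these steps only reduce the statement to a special case of itself. You also cannot shortcut this by invoking Theorem~\ref{th:optany}: the paper says explicitly that the triangular corollary and Theorem~\ref{th:optany} are deduced \emph{from} Theorem~\ref{th:incon}, so in the source the implications run the other way. The paper gives no proof of its own here; it only cites \cite[Theorem 4.10]{bib:neri}. Everything therefore rests on producing a $[\mc T_r,\binom{r-d+2}{2},d]_q$ code over every $\Fq$. For $q$ large compared with $r$, the diagonal MDS construction already does this, so the small-field case is the only non-routine part of the theorem.

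\textbf{The gap in the triangular base case.} Your plan there is not just unfinished; it targets the wrong property. A Gabidulin code $\mc G\subseteq\Fq^{r\times r}$ with minimum distance $d$ has dimension $r(r-d+1)$. Vanishing below the diagonal imposes $\binom{r}{2}$ linear conditions. If these conditions were independent on $\mc G$, as you propose to show, you would get
\[
\dim\bigl(\mc G\cap \Fq^{\mc T_r}\bigr)=r(r-d+1)-\binom{r}{2}=\binom{r-d+2}{2}-\binom{d-1}{2}.
\]
This falls short of $\nu_{\min}(\mc T_r,d)$ by $\binom{d-1}{2}>0$ whenever $d\ge 3$. For $r=4$, $d=3$ it gives $2$ instead of $3$. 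For $d=r\ge 3$ it is not even positive, although the target is $1$.

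What you actually need is that exactly $\binom{d-1}{2}$ of those conditions become \emph{redundant} on $\mc G$. That means choosing a Gabidulin code and an $\Fq$-basis such that an unusually large subcode stabilises the corresponding flag, and doing so for every $q$. Your $q$-Vandermonde/systematic-form sketch gives no mechanism for this. So the key step of the proof is missing, and you need a genuinely different construction for the triangular (equivalently, strictly monotone) case over small fields.
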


\begin{corollary}
Conjecture \ref{con:etzsilb} holds true for upper triangular matrices over any finite field.
\end{corollary}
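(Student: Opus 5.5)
The plan is to deduce the corollary from Theorem~\ref{th:incon}: it is enough to check that the upper triangular diagram $\mc{T}_n$ belongs to one of the two families treated there. The definition of $\mc{T}_n$ shows that it is the Ferrers diagram $\mc{T}_n=[1,2,\ldots,n]$, since column $j$ contains exactly the dots $(i,j)$ with $i\le j$. So $c_j=j$ for every $j\in[n]$.

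We then check strict monotonicity directly. For every $i\in[n-1]$ we have $c_i=i>0$ and $c_{i+1}=i+1>c_i$, so $\mc{T}_n$ is strictly monotone in the sense of the definition. Part (1) of Theorem~\ref{th:incon} then gives, for every $d\in[n]$ and every finite field $\Fq$, an $[\mc{T}_n,\nu_{\min}(\mc{T}_n,d),d]_q$ MFD code. By the computation preceding the corollary, this code has dimension $\binom{n-d+2}{2}$, which is exactly the statement of Conjecture~\ref{con:etzsilb} for $\mc{T}_n$.

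As a cross-check, we can use part (2) instead. Here $c_{i+1}-c_i=1\le 1$ and $c_1=1\le 1$, so $\mc{T}_n$ is also initially convex, consistent with the remark that triangular diagrams have both properties and that $\mc{T}_n$ is self-adjoint under reflection across the antidiagonal.

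There is no real obstacle. The only point needing care is the bookkeeping that identifies $\mc{T}_n$ with the column vector $[1,2,\ldots,n]$ and confirms that it satisfies the top-right justified axioms of Definition~\ref{def:ferr}. Both facts are immediate from $i\le j$: if $(i,j)\in\mc{T}_n$ with $j<n$, then $i\le j+1$; and if $i>1$, then $i-1\le j$.
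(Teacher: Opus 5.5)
Your proposal is correct and takes the same approach as the paper. The paper gives no separate proof: it treats this corollary as an immediate consequence of Theorem~\ref{th:incon} together with the preceding remark that triangular diagrams are strictly monotone and initially convex, and your identification $\mc{T}_n=[1,2,\ldots,n]$ with the check $c_{i+1}>c_i>0$ is exactly the verification that remark relies on.
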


The following is a well-known result whose proof can be found in \cite{bib:antrobus} and gives criteria for the existence of MFD codes on subdiagrams or superdiagrams.
\begin{lemma}[{see \cite[Remark II.12 and Remark II.14]{bib:antrobus}}]
\label{lem:incldots}
Let $\mathcal{F}$ be a Ferrers diagram of order $n$ and let $2\leq d \leq n$ be an integer.
\begin{enumerate}
    \item Let $\bar{\mathcal{F}}\subseteq\mathcal{F}$ be a Ferrers diagram of order $n$ such that $\nu_\text{min}(\mathcal{\mathcal{F}},d)=\nu_\text{min}(\bar{\mathcal{F}},d)$. If $\mathcal{C}$ is an $[\bar{\mathcal{F}},{\nu_{\min}(\bar{\mathcal{F}},d)},d]_q$ MFD code, then it is also an $[\mathcal{F},{\nu_{\min}(\mathcal{F},d)},d]_q$ MFD code.
     \item Let $\bar{\mathcal{F}}\supseteq\mathcal{F}$ be a Ferrers diagram of order $n$ such that
     $\nu_{\min}(\mathcal{F},d)=\nu_{\min{}}(\bar{\mathcal{F}},d)-|\bar{\mathcal{F}}\setminus{\mathcal{F}}|$. If $\mathcal{C}$ is an $[\bar{\mathcal{F}},{\nu_{\min}(\bar{\mathcal{F}},d)},d]_q$ MFD code, then $\mathcal{C}\cap\mathbb{F}_q^{\mathcal{F}}$ is an $[\mathcal{F},{\nu_{\min}(\mathcal{F},d)},d]_q$ MFD code.
\end{enumerate}
\end{lemma}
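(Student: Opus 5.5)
The plan is to prove both parts directly from the definition of an MFD code and the Etzion--Silberstein bound (Theorem~\ref{th:dimension}), using two observations. Since $\mathbb{F}_q^{\bar{\mathcal F}}\subseteq \mathbb{F}_q^{\mathcal F}$ whenever $\bar{\mathcal F}\subseteq\mathcal F$, the set of codes supported on a subdiagram is contained in the set of codes supported on the larger diagram. The minimum rank distance is a property of the code as a set of matrices and does not depend on which ambient diagram we regard it in.

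For part (1), take an $[\bar{\mathcal F},\nu_{\min}(\bar{\mathcal F},d),d]_q$ MFD code $\mathcal C$. Every codeword is supported on $\bar{\mathcal F}\subseteq\mathcal F$, so $\mathcal C\subseteq\mathbb{F}_q^{\mathcal F}$ is an $[\mathcal F,k,d]_q$ code with $k=\nu_{\min}(\bar{\mathcal F},d)$. The hypothesis gives $k=\nu_{\min}(\mathcal F,d)$, so $\mathcal C$ is MFD for $\mathcal F$. Nothing else is needed here.

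For part (2), let $\mathcal C$ be an $[\bar{\mathcal F},\nu_{\min}(\bar{\mathcal F},d),d]_q$ MFD code and set $\mathcal C'=\mathcal C\cap\mathbb{F}_q^{\mathcal F}$. This is a linear subspace of $\mathbb{F}_q^{\mathcal F}$, and its nonzero elements are nonzero elements of $\mathcal C$, so each has rank at least $d$.

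For the dimension, observe that $\mathbb{F}_q^{\mathcal F}$ is the kernel inside $\mathbb{F}_q^{\bar{\mathcal F}}$ of the coordinate projection onto the entries indexed by $\bar{\mathcal F}\setminus\mathcal F$. Hence $\mathcal C'$ is the kernel of this projection restricted to $\mathcal C$. Since the image has dimension at most $|\bar{\mathcal F}\setminus\mathcal F|$, rank--nullity gives
\[
\dim\mathcal C'\ge \nu_{\min}(\bar{\mathcal F},d)-|\bar{\mathcal F}\setminus\mathcal F|=\nu_{\min}(\mathcal F,d).
\]

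It remains to pin down the minimum distance and the dimension exactly. Theorem~\ref{th:dimension}, applied to $\mathcal C'$ with its true minimum distance $d'\ge d$, gives $\dim\mathcal C'\le\nu_{\min}(\mathcal F,d')\le\nu_{\min}(\mathcal F,d)$, using that $\nu_{\min}(\mathcal F,\cdot)$ is nonincreasing in $d$. If one prefers not to invoke that monotonicity, it suffices to note that any code with minimum distance at least $d$ satisfies the same bound, since the Etzion--Silberstein argument only uses that nonzero codewords have rank at least $d$.

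The remaining step, and the only real obstacle, is the degenerate case $\nu_{\min}(\mathcal F,d)=0$, where $\mathcal C'$ may be the zero code. When $\mathcal C'\neq0$, equality of dimension holds, and we may take a subspace of $\mathcal C'$ if needed. Its minimum distance is at least $d$, and by the usual convention for MFD codes (distance at least $d$ with maximal dimension, or exactly $d$ once one checks attainment via the bound with $d'$) it is then an $[\mathcal F,\nu_{\min}(\mathcal F,d),d]_q$ MFD code. When $\nu_{\min}(\mathcal F,d)=0$, the statement is trivial with the zero code. For exact attainment of distance $d$ in the nontrivial case, if $d'>d$ one would have $\nu_{\min}(\mathcal F,d')<\nu_{\min}(\mathcal F,d)$ whenever the latter is positive, which I would check from the formula for $\nu_j$. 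This contradicts the dimension count and forces $d'=d$.
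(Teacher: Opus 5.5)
Your proof is correct and is the standard argument behind the cited remarks of Antrobus and Gluesing-Luerssen; the paper only cites them and gives no proof of its own. Part (1) follows by inclusion of supports. Part (2) follows by rank--nullity on the projection of $\mathcal C$ onto the entries indexed by $\bar{\mathcal F}\setminus\mathcal F$, combined with Theorem~\ref{th:dimension}.

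The step you left unchecked is a one-liner. Suppose $\nu_{\min}(\mathcal F,d)=\nu_j(\mathcal F,d)>0$. Then some summand $c_i-d+1+j$ is positive, and it drops by one when $d$ is replaced by $d+1$; the index $j\le d-1$ is still admissible for $d+1$. Hence $\nu_{\min}(\mathcal F,d')\le\nu_j(\mathcal F,d+1)<\nu_{\min}(\mathcal F,d)$ for every $d'>d$, which forces $\mathcal C\cap\mathbb F_q^{\mathcal F}$ to have minimum distance exactly $d$. When $\nu_{\min}(\mathcal F,d)=0$, the same bound forces $\mathcal C\cap\mathbb F_q^{\mathcal F}=\{0\}$. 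So you do not need to pass to a subspace or invoke any convention on the minimum distance, and it is worth removing those hedges.
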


In \cite{bib:neri}, Neri and Stanojkovski proved that for  MDS-constructible pairs, the existence of an MFD code over a finite field $\mathbb{F}_q$ can be reduced to the existence of an MFD code on triangular Ferrers diagrams over the same field. Consequently, from Theorem~\ref{th:incon}, they were able to deduce the following result.

\begin{theorem}[{see \cite[Theorem 4.22]{bib:neri}}]
\label{th:optany}
Let $\mc{F}$ be a Ferrers diagram of order $n$ and let $2 \leq d \leq n$ be an integer. If $(\mc{F},d)$ is MDS-constructible, then there exists an $[\mathcal{F},{\nu_{\min}(\mathcal{F},d)},d]_q$ MFD code over any finite field $\mathbb{F}_q$.
\end{theorem}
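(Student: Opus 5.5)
The statement is Theorem~\ref{th:optany}, a cited result from \cite{bib:neri}. My plan follows its reduction strategy: first reduce an arbitrary MDS-constructible pair $(\mc F,d)$ to a triangular diagram, then invoke Theorem~\ref{th:incon} (upper triangular diagrams are strictly monotone) to get an MFD code on that triangle over any $\Fq$, and finally transport the code back to $\mc F$ using Lemma~\ref{lem:incldots}.

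\textbf{Step 1 (reduction to a staircase).} Fix $j$ such that $(\mc F,d)$ is $j$-Singleton. Since the pair is MDS-constructible, $\nu_j(\mc F,d)=\sum_i\max\{|D_i\cap\mc F|-d+1,0\}$. From this I would derive the structure of $\mc F$: the region removed by the $j$-th Singleton deletion must contain exactly $d-1$ dots of every diagonal $D_i$ with $|D_i\cap\mc F|\ge d$. Equivalently, $\mc F\cap\mc L_{n,d,j}$ is forced to lie along diagonals in a controlled way.

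\textbf{Step 2 (sandwich between $\mc F$ and a triangle).} I would build a Ferrers diagram $\bar{\mc F}\supseteq\mc F$ obtained by filling each long diagonal. More precisely, I would take $\bar{\mc F}$ to be the union of $\mc F$ with a suitable shifted triangle $\mc T$, chosen so that $\bar{\mc F}\setminus\mc F$ lies entirely in $\mc S_{n,d,j}$ and each added dot raises $\nu_{\min}$ by exactly one. This gives $\nu_{\min}(\mc F,d)=\nu_{\min}(\bar{\mc F},d)-|\bar{\mc F}\setminus\mc F|$, so by part (2) of Lemma~\ref{lem:incldots} it suffices to find an MFD code on $\bar{\mc F}$. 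Next I would choose $\bar{\mc F}$ so that its dots outside a triangular block $\mc T_{N}$ (placed in the top-right corner, possibly after removing empty rows and columns) do not change $\nu_{\min}$. Then part (1) of Lemma~\ref{lem:incldots} applies, with an MFD code for $(\mc T_N,d)$, or for an equivalent triangular configuration, embedded in $\bar{\mc F}$. Theorem~\ref{th:incon} supplies that triangular code over every $\Fq$.

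\textbf{Main obstacle.} The hard part is the combinatorial bookkeeping in Step 2: choosing $\bar{\mc F}$ so that both equalities of $\nu_{\min}$ hold simultaneously, for the correct $j$. The difficulty is that $\nu_{\min}$ is a minimum over $j$, so adding dots could change which $j$ attains the minimum. I would first try the choice $\bar{\mc F}=\mc F\cup\mc T_{n,d,j}$-type fillings and check, using $|\mc F\cap\mc S_{n,d,j}|$ counts diagonal by diagonal, that $\nu_{j'}$ for $j'\neq j$ grows at least as fast as $\nu_j$. If a single step fails, I would proceed by induction on the number of dots of $\mc F$ missing from the triangle, adding one diagonal at a time.
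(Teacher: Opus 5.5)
Your sandwich $\mc F\subseteq\bar{\mc F}\supseteq\mc T$ is a workable route, but the proposal stops exactly where the proof has content. You never say which triangle to take, and the obstacle you flag (enlarging $\mc F$ might change which $j$ realizes $\nu_{\min}$) is left to a growth-rate check or an induction that you do not carry out. The missing idea is to take $\mc T=\mc T_l:=D_1\cup\cdots\cup D_l$, the top-right triangle with $l:=\max I(\mc F,d)$. (If $I(\mc F,d)=\emptyset$, then $\nu_{\min}(\mc F,d)=0$ and the zero code works.) You then need two containments:
(a) $\mc F\cap\mc S_{n,d,j}\subseteq\mc T_l$. This is parts (1) and (2) of Lemma~\ref{lem:puntidots}, i.e.\ the equality case of your counting argument: no dot of $\mc F\cap\mc S_{n,d,j}$ lies below the main diagonal or on a diagonal outside $I(\mc F,d)$.
(b) $\mc T_l\cap\mc L_{n,d,j}\subseteq\mc F$. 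Your Step 1 gives (b) only on the diagonals $D_i$ with $i\in I(\mc F,d)$. For the shorter diagonals you must add that $D_l\cap\mc L_{n,d,j}\subseteq\mc F$ propagates to every $D_i$ with $i<l$ by the Ferrers property: move right inside the first $d-1-j$ rows and up inside the last $j$ columns. This is the analogue of Lemma~\ref{lem:diagonals_blocks}.

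With (a) and (b), $\bar{\mc F}:=\mc F\cup\mc T_l$ satisfies $\bar{\mc F}\cap\mc S_{n,d,j}=\mc T_l\cap\mc S_{n,d,j}$ and $\bar{\mc F}\setminus\mc F\subseteq\mc S_{n,d,j}$. Your obstacle then disappears in one line: for every $j'$, $\nu_{j'}(\bar{\mc F},d)\ge\nu_{j'}(\mc T_l,d)\ge\nu_{\min}(\mc T_l,d)=\nu_j(\mc T_l,d)=\nu_j(\bar{\mc F},d)$, since a triangle is $j$-Singleton for every $j$. So $(\bar{\mc F},d)$ is $j$-Singleton with $\nu_{\min}(\bar{\mc F},d)=\nu_{\min}(\mc T_l,d)$. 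This gives the hypotheses of Lemma~\ref{lem:incldots}(1) for $\mc T_l\subseteq\bar{\mc F}$ and of Lemma~\ref{lem:incldots}(2) for $\mc F\subseteq\bar{\mc F}$; no induction is needed.

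For comparison, the cited proof (its block version is the proof of Theorem~\ref{theo:tri}) uses the intersection $\mc F\cap\mc T_l=(\mc F\cap\mc S_{n,d,j})\cup(\mc T_l\cap\mc L_{n,d,j})$ rather than the union. It needs Lemma~\ref{le:subdiblock} to see that this subdiagram stays $j$-Singleton, whereas your union only needs monotonicity of $\nu_{j'}$. Both routes output the same code, $\mc C_{\mc T}\cap\F_q^{\mc F}=\mc C_{\mc T}\cap\F_q^{\mc F\cap\mc T_l}$.
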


\subsection{Sum-rank-metric codes}
\label{chsec:unosumrank}

In this section, we introduce the basic notions on sum-rank-metric codes. These codes were originally introduced in the context of multishot network coding \cite{bib:nobrega}, and in recent years they have gained popularity also for their intrinsic mathematical features; see e.g. \cite{bib:martinez,bib:byrneSRM2021,bib:nerilin}.

Let $t,m$ be positive integers and consider the $\Fq$-vector space
 \begin{equation*}
     % \mathbb{F}_q^\textbf{m}:=\bigoplus_{i=1}^t\mathbb{F}_q^{m},
(\mathbb{F}_q^{m \times m})^t:=\bigoplus_{i=1}^t\mathbb{F}_q^{m\times m}.
 \end{equation*}
 \begin{definition}
     Let $X=(X_1,\ldots,X_t)\in    (\mathbb{F}_q^{m \times m})^t$. 
     The \textit{sum-rank weight} of $X$ is the quantity
     \begin{equation*}
         \text{w}_{\srk}(X):=\sum_{i=1}^t\rk(X_i).
     \end{equation*}
     Moreover, we can endow the space $(\mathbb{F}_q^{m \times m})^t$ with the \textit{sum-rank distance}, defined as
     \begin{equation*}
     \begin{array}{rccl}
         \text{d}_{\srk} & :\,  (\mathbb{F}_q^{m \times m})^t\times   (\mathbb{F}_q^{m \times m})^t &\longrightarrow &\mathbb{Z}_{\ge 0}, \\
        & (X,Y)& \longmapsto &\text{w}_{\srk}(X-Y).
         \end{array}
     \end{equation*}
 \end{definition}
Observe that this definition is a generalization of both the Hamming distance and the rank distance. On the one hand, taking $t=1$ one has $(\mathbb{F}_q^{m \times m})^t=\mathbb{F}_q^{m\times m}$ and the sum-rank distance coincides with the rank distance. On the other hand, taking $m=1$, one obtains $(\mathbb{F}_q^{m \times m})^t=\mathbb{F}_q^{t}$, and in this case the sum-rank distance is simply the Hamming distance. In the literature, the sum-rank distance has been studied also over more general spaces, where each summand is a matrix space with possibly different size.
 
 \begin{definition}
     An $[(m\times m)^t,k,d]_{q}$ \textit{sum-rank-metric code} $\mathcal{C}$ is a linear subspace of $(\mathbb{F}_q^{m \times m})^t$ of dimension $k$ endowed with the sum-rank distance. The parameter $d$ is the \textit{minimum sum-rank distance} of $\mathcal{C}$, which  is defined as
     \begin{equation*}
         d=\text{d}_{\srk}(\mathcal{C}):=\min\{\,\text{w}_{\srk}(X)\,:\,X\in\mathcal{C},X\ne{0}\,\}.
     \end{equation*}
 \end{definition}

 \begin{remark}[see \cite{bib:gorlasumrank}]
Observe that every sum-rank-metric code can be seen as a code with the rank-metric. In particular, the linear injection $\iota\,:\,(\mathbb{F}_q^{m \times m})^t\rightarrow\mathbb{F}_q^{tm\times tm}$ given by
   \begin{equation*}
       (C_1,\ldots,C_t)\mapsto 
    \begin{pmatrix}
    C_1 &        &        \\
        & \ddots &        \\
        &        & C_t
    \end{pmatrix}
   \end{equation*}
   is distance preserving, i.e. $\text{w}_{\srk}(C)=\rk\iota(C)$ for all $C\in(\mathbb{F}_q^{m \times m})^t$. In this way a sum-rank-metric code $\mathcal{C}\subseteq(\mathbb{F}_q^{m \times m})^t$ can be identified with its image $\iota(\mathcal{C})$, which is a rank-metric code in $\mathbb{F}_q^{tm\times tm}$.
 \end{remark}
The following theorem is a special case of the Singleton-like bound for sum-rank-metric codes. A more general statement can be given without supposing all square matrices of the same size (see for instance \cite{bib:byrneSRM2021,bib:gorlaSRM2022}).
 \begin{theorem}[{see \cite[Corollary 2]{bib:Martinez2019}}]
     Let $\mathcal{C}$ be an $[(m\times m)^t,k,d]_q$ code. Then,
         $k\leq m (t m-d+1)$.
 \end{theorem}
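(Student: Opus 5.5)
The plan is to prove the bound by a puncturing argument: we erase $d-1$ columns in total, spread across the blocks. We then show that the resulting projection is injective on $\mathcal{C}$, and compare dimensions.

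First, note that every nonzero $X\in(\mathbb{F}_q^{m\times m})^t$ has $\text{w}_{\srk}(X)\le tm$, so $d\le tm$ and $d-1\le tm-1$. Write $d-1=am+b$ with integers $a\ge 0$ and $0\le b<m$. Since $d-1<tm$, we get $a\le t-1$, so there is at least one block that is not entirely erased. Define the $\Fq$-linear map
$$\pi:(\mathbb{F}_q^{m\times m})^t\longrightarrow \bigl(\mathbb{F}_q^{m\times m}\bigr)^{t-a-1}\oplus\mathbb{F}_q^{m\times(m-b)}$$
as follows. It discards the last $a$ blocks $X_{t-a+1},\ldots,X_t$ completely. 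It also deletes the last $b$ columns of the block $X_{t-a}$, and it keeps the blocks $X_1,\ldots,X_{t-a-1}$ unchanged. The target space has dimension
$$m\bigl(m(t-a-1)+(m-b)\bigr)=m\bigl(tm-(am+b)\bigr)=m(tm-d+1).$$

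Next, we show that $\pi|_{\mathcal{C}}$ is injective. Suppose $X\in\mathcal{C}$ is nonzero with $\pi(X)=0$. Then $X_i=0$ for $i<t-a$. Moreover, $X_{t-a}$ is supported on only $b$ columns, so $\rk(X_{t-a})\le b$. Each of the remaining $a$ blocks has rank at most $m$. Hence
$$\text{w}_{\srk}(X)\le b+am=d-1<d,$$
which contradicts the definition of the minimum sum-rank distance. Therefore $\ker\pi\cap\mathcal{C}=\{0\}$. Consequently $k=\dim\mathcal{C}\le\dim\operatorname{im}\pi\le m(tm-d+1)$.

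There is no real obstacle in this argument. The only point needing care is the bookkeeping of the division $d-1=am+b$. It guarantees that the erased coordinates form whole columns of blocks, so that the rank of each block can be bounded by the number of surviving columns in its support. It also guarantees $a<t$, so that $\pi$ is well defined.
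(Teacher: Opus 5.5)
Your proof is correct. Since $a\le t-1$, the map $\pi$ is well defined and its target has dimension $m(tm-d+1)$. A nonzero codeword in $\ker\pi$ would have sum-rank weight at most $am+b=d-1$, which is impossible.

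The paper gives no proof of its own here; it simply quotes the bound from \cite{bib:Martinez2019}. Your puncturing argument is the standard self-contained route to such Singleton-type bounds. Equivalently, it observes that $\mathcal{C}$ meets trivially the $m(d-1)$-dimensional space of tuples supported on the $d-1$ erased columns. If you phrase injectivity as \emph{two codewords with the same image differ by a tuple of sum-rank weight at most $d-1$}, the same argument also gives $|\mathcal{C}|\le q^{m(tm-d+1)}$ for non-linear codes.
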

 \begin{definition}
Let $\mathcal{C}$ be an $[(m\times m)^t,k,d]_q$ code. The code $\mc{C}$ is a \textit{maximum sum-rank distance (MSRD) code} if $k=m (t m-d+1)$.
 \end{definition}
 A construction of MSRD codes for every admissible $d, m$ and $t$ is given in \cite{bib:martinez}, under the assumption that $t\le q-1$. These codes are now known as \emph{linearized Reed-Solomon codes}.
 
 \begin{theorem}
     Let $q$ be a prime power and let $d,t,m$ be  positive integers such that $1\le d \le tm$. If $t\leq q-1$, then there exists an $[(m\times m)^t,k,d]_q$ MSRD code, that is, a sum-rank-metric code such that
        $ k= m (t m-d+1).$
     \label{th:MSRD}
 \end{theorem}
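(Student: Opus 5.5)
This is the existence of linearized Reed--Solomon codes due to Mart\'inez-Pe\~nas~\cite{bib:martinez}. I would prove it by writing down that construction explicitly and checking its minimum distance by a root-counting argument for skew polynomials.

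\emph{Setup.} Fix an $\Fq$-basis of $\Fqm$, so that each $\Fq$-linear map $\Fqm\to\Fqm$ is an $m\times m$ matrix over $\Fq$. Let $\sigma$ be the Frobenius $x\mapsto x^q$. Since $t\le q-1$, pick $t$ elements $a_1,\ldots,a_t\in\Fqm^*$ whose norms $N_{\Fqm/\Fq}(a_i)$ are pairwise distinct. This is possible because the norm map is surjective onto $\Fq^*$, which has $q-1\ge t$ elements. Then the $a_i$ lie in pairwise distinct $\sigma$-conjugacy classes.

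\emph{Construction.} Set $k'=tm-d+1$ and let $\mathcal P$ be the space of skew polynomials $f=\sum_{j<k'} f_j x^j\in\Fqm[x;\sigma]$. To each $f$ associate the tuple of $\Fq$-linear maps
\[
X_i=f(a_i)\colon\ \beta\mapsto \sum_j f_j\,\sigma^j(\beta)\,N_j(a_i),
\qquad N_j(a)=a\,\sigma(a)\cdots\sigma^{j-1}(a),
\]
that is, the linearized evaluation $\beta\mapsto (f\cdot\beta)(a_i)$, viewed as an element of $\Fq^{m\times m}$.

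\emph{Dimension.} The code $\{(X_1,\ldots,X_t)\}$ is an $\Fq$-subspace of $(\Fq^{m\times m})^t$. Its $\Fq$-dimension is $mk'=m(tm-d+1)$, provided the evaluation map is injective. Injectivity follows from the weight bound below, because $tm-k'+1=d\ge1$.

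\emph{Weight bound.} Take $f\neq0$. The main step, and the main obstacle, is the bound
\[
\sum_i \dim_{\Fq}\ker X_i\le \deg f\le k'-1.
\]
I would prove it as follows. Choose bases $\beta_{i,1},\ldots,\beta_{i,r_i}$ of $\ker X_i$, with $r_i=\dim_{\Fq}\ker X_i$. Then $f$ vanishes (right evaluation) at all the conjugates $\sigma(\beta_{i,l})\,a_i\,\beta_{i,l}^{-1}$. By Lam's theory of P-independent sets, or equivalently by Mart\'inez-Pe\~nas's result that conjugates coming from distinct classes and from $\Fq$-independent $\beta$'s form a P-independent set, the minimal skew polynomial vanishing on all of them has degree $\sum_i r_i$. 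It right-divides $f$, which gives the bound. The delicate point here is that P-independence holds across different classes; this is exactly where the distinct norms are used.

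\emph{Conclusion.} From the bound,
\[
\mathrm{w}_{\srk}(X)=\sum_i (m-r_i)\ge tm-k'+1=d.
\]
Together with the Singleton bound this shows the code is MSRD with $k=m(tm-d+1)$. Since a sum-rank weight bound only implies minimum distance \emph{at least} $d$, I would note that equality is forced by the Singleton-like bound: the code has dimension $m(tm-d+1)$, so its minimum distance cannot exceed $d$.
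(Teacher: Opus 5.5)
Your proposal is correct and takes essentially the paper's route: the paper gives no argument of its own for this theorem but simply invokes the linearized Reed--Solomon codes of \cite{bib:martinez}, and you have reconstructed exactly that construction. In particular, choosing $a_1,\ldots,a_t\in\Fqm^*$ with distinct norms and using P-independence of conjugates across distinct $\sigma$-classes is precisely where $t\le q-1$ enters, and your closing use of the Singleton-like bound to force the minimum distance to equal $d$ (not merely be at least $d$) is a correct detail.
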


\section{MSRD-constructibility and optimality}\label{sec:MSRD-contruction}
In this section, we focus on block diagrams. 
For those diagrams, we introduce the concept of MSRD-constructible pairs, which are a generalization of the MDS-constructible ones. Moreover, we show how we can use MSRD codes to construct MFD codes for MSRD-constructible pairs. Our approach generalizes the construction of MFD codes for MDS-constructible pairs, where one uses MDS codes \cite{bib:etzion}.

\begin{definition}
Let $m \in \mathbb N$. We define the $(i,j)$-th \textit{block} of side $m$ as the set
$Q_{i,j}^{(m)}:=([im]\setminus[(i-1)m])\times ([jm]\setminus[(j-1)m]).$
\end{definition}

From this, the definition of a block Ferrers diagram follows easily by juxtaposing blocks.

\begin{definition}
    Let $m,s \in \mathbb N$. A Ferrers diagram $\mathcal{F}$ of order $sm$ is an $m$\textit{-block Ferrers diagram} if for every $i, j \in [s]$, $Q_{i,j}^{(m)}\cap \mathcal{F}\in \{\emptyset, Q_{i,j}^{(m)}  \}$. 
\end{definition}

We stress here that we will always consider square blocks of the same size.

\begin{definition}
Let $\mathcal{F}$ be an $m$-block Ferrers diagram of order $sm$. We define the \textit{block set} of $\mathcal{F}$ as
\begin{equation*}
    \mathbf{B}(\mathcal{F})=\{Q_{i,j}^{(m)}\,:\,i,j\in[s] \text{ and } Q_{i,j}^{(m)}\cap \mathcal{F}=Q_{i,j}^{(m)}\}.
\end{equation*}
\end{definition}
In every diagram we consider all blocks of the same size $m$, thus to lighten up the notation we write $Q_{i,j}$ instead of $Q_{i,j}^{(m)}$.
Let us consider block rows, block columns and block diagonals of $[sm]^2$:
\begin{enumerate}
    \item Block rows: $R_i^B:=\{Q_{i,j}\in \mathbf{B}([sm]^2)\,:\,j\in[s]\},\quad i\in[s].$
    \item Block columns: $C_j^B:=\{Q_{i,j}\in\mathbf{B}([sm]^2)\,:\,i\in[s]\},\quad j\in[s].$
    \item Block diagonals: $D^B_i:=\{Q_{l,s-i+l}\in\mathbf{B}([sm]^2)\,:\,1\leq{l}\leq{i}\},\quad i\in[s].$
\end{enumerate}
Furthermore, if $\mc{F}$ is an $m$-block Ferrers diagram of order $sm$, its block rows, block columns and block diagonals are given by $R_i^B\cap\mathbf{B}(\mc{F})$, $C_i^B\cap\mathbf{B}(\mc{F})$ and $D_i^B\cap\mathbf{B}(\mc{F})$, $i\in[s]$, respectively. Moreover, as already said, a diagram can be identified by considering only the number of elements per column and the top-right aligned property (i.e. writing $\mathcal{F}=[c_1,\ldots,c_n]$, if $\mathcal{F}$ has order $n$). In the same way, if $\mathcal{F}$ is an $m$-block Ferrers diagram with order ${sm}$, we will write $\mathcal{F}=[[h_1,\ldots,h_s]]_m$ where $h_j=|C^B_j\cap\mathbf{B}(\mc{F})|$ for every $j\in[s]$.

\begin{example}
The diagram $\mathcal{F}=[[1,1,2,2,5]]_2$ is a $2$-block Ferrers diagram of order $10$ with block set
\begin{equation*}
\mathbf{B}(\mathcal{F})=\{Q_{1,1},Q_{1,2},Q_{1,3},Q_{1,4},Q_{1,5},Q_{2,3},Q_{2,4},Q_{2,5},Q_{3,5},Q_{4,5},Q_{5,5}\}.
\end{equation*}
\begin{center} 
\begin{tikzpicture}[scale=0.4]
\draw[help lines, white, fill=teal!5] (0,1)-- (0,-9)-- (10,-9)--(10,1)--(0,1);
\draw[help lines, thick, blue] (0.1,0.9)-- (0.1,-0.9)-- (1.9,-0.9)--(1.9,0.9)--(0.1,0.9);
\draw[help lines, thick, orange] (2.1,0.9)-- (2.1,-0.9)-- (3.9,-0.9)--(3.9,0.9)--(2.1,0.9);
\draw[help lines, thick, red] (4.1,0.9)-- (4.1,-2.9)-- (5.9,-2.9)--(5.9,0.9)--(4.1,0.9);
\draw[help lines, thick, green] (6.1,0.9)-- (6.1,-2.9)-- (7.9,-2.9)--(7.9,0.9)--(6.1,0.9);
\draw[help lines,thick, magenta] (8.1,0.9)-- (8.1,-8.9)-- (9.9,-8.9)--(9.9,0.9)--(8.1,0.9);

\draw[help lines, dashed] (2,1) -- (2,-9);
\draw[help lines, dashed] (0,-1) -- (10,-1);
\draw[help lines, dashed] (4,1) -- (4,-9);
\draw[help lines, dashed] (0,-3) -- (10,-3);
\draw[help lines, dashed] (6,1) -- (6,-9);
\draw[help lines, dashed] (0,-5) -- (10,-5);
\draw[help lines, dashed] (8,1) -- (8,-9);
\draw[help lines, dashed] (0,-7) -- (10,-7);

 %prima riga
\draw[fill=black] (0.5,0.5) circle (0.1cm)node[above right,yshift=5pt]{C$^B_1$};
\draw[fill=black] (1.5,0.5) circle (0.1cm);
\draw[fill=black] (2.5,0.5) circle (0.1cm)node[above right,yshift=5pt]{C$^B_2$};
\draw[fill=black] (3.5,0.5) circle (0.1cm);
\draw[fill=black] (4.5,0.5) circle (0.1cm)node[above right,yshift=5pt]{C$^B_3$};
\draw[fill=black] (5.5,0.5) circle (0.1cm);
\draw[fill=black] (6.5,0.5) circle (0.1cm)node[above right,yshift=5pt]{C$^B_4$};
\draw[fill=black] (7.5,0.5) circle (0.1cm);
\draw[fill=black] (8.5,0.5) circle (0.1cm)node[above right,yshift=5pt]{C$^B_5$};
\draw[fill=black] (9.5,0.5) circle (0.1cm);

%seconda riga
\draw[fill=black] (0.5,-0.5) circle (0.1cm);
\draw[fill=black] (1.5,-0.5) circle (0.1cm);
\draw[fill=black] (2.5,-0.5) circle (0.1cm);
\draw[fill=black] (3.5,-0.5) circle (0.1cm);
\draw[fill=black] (4.5,-0.5) circle (0.1cm);
\draw[fill=black] (5.5,-0.5) circle (0.1cm);
\draw[fill=black] (6.5,-0.5) circle (0.1cm);
\draw[fill=black] (7.5,-0.5) circle (0.1cm);
\draw[fill=black] (8.5,-0.5) circle (0.1cm);
\draw[fill=black] (9.5,-0.5) circle (0.1cm);

%terza riga
\draw[fill=black] (4.5,-1.5) circle (0.1cm);
\draw[fill=black] (5.5,-1.5) circle (0.1cm);
\draw[fill=black] (6.5,-1.5) circle (0.1cm);
\draw[fill=black] (7.5,-1.5) circle (0.1cm);
\draw[fill=black] (8.5,-1.5) circle (0.1cm);
\draw[fill=black] (9.5,-1.5) circle (0.1cm);

%quarta riga
\draw[fill=black] (4.5,-2.5) circle (0.1cm);
\draw[fill=black] (5.5,-2.5) circle (0.1cm);
\draw[fill=black] (6.5,-2.5) circle (0.1cm);
\draw[fill=black] (7.5,-2.5) circle (0.1cm);
\draw[fill=black] (8.5,-2.5) circle (0.1cm);
\draw[fill=black] (9.5,-2.5) circle (0.1cm);

\draw[fill=black] (8.5,-3.5) circle (0.1cm);
\draw[fill=black] (9.5,-3.5) circle (0.1cm);

\draw[fill=black] (8.5,-4.5) circle (0.1cm);
\draw[fill=black] (9.5,-4.5) circle (0.1cm);

\draw[fill=black] (8.5,-5.5) circle (0.1cm);
\draw[fill=black] (9.5,-5.5) circle (0.1cm);

\draw[fill=black] (8.5,-6.5) circle (0.1cm);
\draw[fill=black] (9.5,-6.5) circle (0.1cm);

\draw[fill=black] (8.5,-7.5) circle (0.1cm);
\draw[fill=black] (9.5,-7.5) circle (0.1cm);

\draw[fill=black] (8.5,-8.5) circle (0.1cm);
\draw[fill=black] (9.5,-8.5) circle (0.1cm);
\end{tikzpicture} 
\begin{tikzpicture}[scale=0.4]
\draw[help lines, white, fill=teal!5] (0,1)-- (0,-9)-- (10,-9)--(10,1)--(0,1);
\draw[help lines, thick,blue] (0.1,0.9)-- (9.9,0.9)-- (9.9,-0.9)--(0.1,-0.9)--(0.1,0.9);
\draw[help lines, thick,orange] (4.1,-1.1)-- (9.9,-1.1)-- (9.9,-2.9)--(4.1,-2.9)--(4.1,-1.1);
\draw[help lines, thick, red] (8.1,-3.1)-- (9.9,-3.1)-- (9.9,-4.9)--(8.1,-4.9)--(8.1,-3.1);
\draw[help lines, thick, green] (8.1,-5.1)-- (9.9,-5.1)-- (9.9,-6.9)--(8.1,-6.9)--(8.1,-5.1);
\draw[help lines, thick, magenta] (8.1,-7.1)-- (9.9,-7.1)-- (9.9,-8.9)--(8.1,-8.9)--(8.1,-7.1);

\draw[help lines, dashed] (2,1) -- (2,-9);
\draw[help lines, dashed] (0,-1) -- (10,-1);
\draw[help lines, dashed] (4,1) -- (4,-9);
\draw[help lines, dashed] (0,-3) -- (10,-3);
\draw[help lines, dashed] (6,1) -- (6,-9);
\draw[help lines, dashed] (0,-5) -- (10,-5);
\draw[help lines, dashed] (8,1) -- (8,-9);
\draw[help lines, dashed] (0,-7) -- (10,-7);

 %prima riga
\draw[fill=black] (0.5,0.5) circle (0.1cm);
\draw[fill=black] (1.5,0.5) circle (0.1cm);
\draw[fill=black] (2.5,0.5) circle (0.1cm);
\draw[fill=black] (3.5,0.5) circle (0.1cm);
\draw[fill=black] (4.5,0.5) circle (0.1cm);
\draw[fill=black] (5.5,0.5) circle (0.1cm);
\draw[fill=black] (6.5,0.5) circle (0.1cm);
\draw[fill=black] (7.5,0.5) circle (0.1cm);
\draw[fill=black] (8.5,0.5) circle (0.1cm);
\draw[fill=black] (9.5,0.5) circle (0.1cm);

%seconda riga
\draw[fill=black] (0.5,-0.5) circle (0.1cm);
\draw[fill=black] (1.5,-0.5) circle (0.1cm);
\draw[fill=black] (2.5,-0.5) circle (0.1cm);
\draw[fill=black] (3.5,-0.5) circle (0.1cm);
\draw[fill=black] (4.5,-0.5) circle (0.1cm);
\draw[fill=black] (5.5,-0.5) circle (0.1cm);
\draw[fill=black] (6.5,-0.5) circle (0.1cm);
\draw[fill=black] (7.5,-0.5) circle (0.1cm);
\draw[fill=black] (8.5,-0.5) circle (0.1cm);
\draw[fill=black] (9.5,-0.5) circle (0.1cm) node[above right,xshift=5pt]{R$^B_1$};

%terza riga
\draw[fill=black] (4.5,-1.5) circle (0.1cm);
\draw[fill=black] (5.5,-1.5) circle (0.1cm);
\draw[fill=black] (6.5,-1.5) circle (0.1cm);
\draw[fill=black] (7.5,-1.5) circle (0.1cm);
\draw[fill=black] (8.5,-1.5) circle (0.1cm);
\draw[fill=black] (9.5,-1.5) circle (0.1cm);

%quarta riga
\draw[fill=black] (4.5,-2.5) circle (0.1cm);
\draw[fill=black] (5.5,-2.5) circle (0.1cm);
\draw[fill=black] (6.5,-2.5) circle (0.1cm);
\draw[fill=black] (7.5,-2.5) circle (0.1cm);
\draw[fill=black] (8.5,-2.5) circle (0.1cm);
\draw[fill=black] (9.5,-2.5) circle (0.1cm) node[above right,xshift=5pt]{R$^B_2$};

\draw[fill=black] (8.5,-3.5) circle (0.1cm);
\draw[fill=black] (9.5,-3.5) circle (0.1cm);

\draw[fill=black] (8.5,-4.5) circle (0.1cm);
\draw[fill=black] (9.5,-4.5) circle (0.1cm)node [above right,xshift=5pt]{R$^B_3$};

\draw[fill=black] (8.5,-5.5) circle (0.1cm);
\draw[fill=black] (9.5,-5.5) circle (0.1cm);

\draw[fill=black] (8.5,-6.5) circle (0.1cm);
\draw[fill=black] (9.5,-6.5) circle (0.1cm)node[above right,xshift=5pt]{R$^B_4$};

\draw[fill=black] (8.5,-7.5) circle (0.1cm);
\draw[fill=black] (9.5,-7.5) circle (0.1cm);

\draw[fill=black] (8.5,-8.5) circle (0.1cm);
\draw[fill=black] (9.5,-8.5) circle (0.1cm)node[above right,xshift=5pt]{R$^B_5$};
\end{tikzpicture} 
\begin{tikzpicture}[scale=0.4]
\draw[help lines, white, fill=teal!5] (0,1)-- (0,-9)-- (10,-9)--(10,1)--(0,1);
\draw[help lines, thick, blue] (0.1,0.9)-- (0.1,-0.9)-- (1.9,-0.9)--(1.9,0.9)--(0.1,0.9);
\draw[help lines, thick, blue] (8.1,-7.1)-- (8.1,-8.9)-- (9.9,-8.9)--(9.9,-7.1)--(8.1,-7.1);
\draw[help lines, thick, orange] (2.1,0.9)-- (2.1,-0.9)-- (3.9,-0.9)--(3.9,0.9)--(2.1,0.9);
\draw[help lines, thick, orange] (4.1,-1.1)-- (4.1,-2.9)-- (5.9,-2.9)--(5.9,-1.1)--(4.1,-1.1);
\draw[help lines, thick, orange] (8.1,-5.1)-- (8.1,-6.9)-- (9.9,-6.9)--(9.9,-5.1)--(8.1,-5.1);
\draw[help lines, thick, red] (4.1,0.9)-- (4.1,-0.9)-- (5.9,-0.9)--(5.9,0.9)--(4.1,0.9);
\draw[help lines, thick, red] (6.1,-1.1)-- (6.1,-2.9)-- (7.9,-2.9)--(7.9,-1.1)--(6.1,-1.1);
\draw[help lines, thick, red] (8.1,-3.1)-- (8.1,-4.9)-- (9.9,-4.9)--(9.9,-3.1)--(8.1,-3.1);
\draw[help lines, thick, green] (6.1,0.9)-- (6.1,-0.9)-- (7.9,-0.9)--(7.9,0.9)--(6.1,0.9);
\draw[help lines, thick, green] (8.1,-1.1)-- (8.1,-2.9)-- (9.9,-2.9)--(9.9,-1.1)--(8.1,-1.1);
\draw[help lines, thick, lime] (8.1,0.9)-- (8.1,-0.9)-- (9.9,-0.9)--(9.9,0.9)--(8.1,0.9);

\draw[help lines, dashed] (2,1) -- (2,-9);
\draw[help lines, dashed] (0,-1) -- (10,-1);
\draw[help lines, dashed] (4,1) -- (4,-9);
\draw[help lines, dashed] (0,-3) -- (10,-3);
\draw[help lines, dashed] (6,1) -- (6,-9);
\draw[help lines, dashed] (0,-5) -- (10,-5);
\draw[help lines, dashed] (8,1) -- (8,-9);
\draw[help lines, dashed] (0,-7) -- (10,-7);

 %prima riga
\draw[fill=black] (0.5,0.5) circle (0.1cm);
\draw[fill=black] (1.5,0.5) circle (0.1cm);
\draw[fill=black] (2.5,0.5) circle (0.1cm);
\draw[fill=black] (3.5,0.5) circle (0.1cm);
\draw[fill=black] (4.5,0.5) circle (0.1cm);
\draw[fill=black] (5.5,0.5) circle (0.1cm);
\draw[fill=black] (6.5,0.5) circle (0.1cm);
\draw[fill=black] (7.5,0.5) circle (0.1cm);
\draw[fill=black] (8.5,0.5) circle (0.1cm);
\draw[fill=black] (9.5,0.5) circle (0.1cm);

%seconda riga
\draw[fill=black] (0.5,-0.5) circle (0.1cm);
\draw[fill=black] (1.5,-0.5) circle (0.1cm);
\draw[fill=black] (2.5,-0.5) circle (0.1cm);
\draw[fill=black] (3.5,-0.5) circle (0.1cm);
\draw[fill=black] (4.5,-0.5) circle (0.1cm);
\draw[fill=black] (5.5,-0.5) circle (0.1cm);
\draw[fill=black] (6.5,-0.5) circle (0.1cm);
\draw[fill=black] (7.5,-0.5) circle (0.1cm);
\draw[fill=black] (8.5,-0.5) circle (0.1cm);
\draw[fill=black] (9.5,-0.5) circle (0.1cm)node[above,xshift=15pt]{$D^B_1$};

%terza riga
\draw[fill=black] (4.5,-1.5) circle (0.1cm);
\draw[fill=black] (5.5,-1.5) circle (0.1cm);
\draw[fill=black] (6.5,-1.5) circle (0.1cm);
\draw[fill=black] (7.5,-1.5) circle (0.1cm);
\draw[fill=black] (8.5,-1.5) circle (0.1cm);
\draw[fill=black] (9.5,-1.5) circle (0.1cm);

%quarta riga
\draw[fill=black] (4.5,-2.5) circle (0.1cm);
\draw[fill=black] (5.5,-2.5) circle (0.1cm);
\draw[fill=black] (6.5,-2.5) circle (0.1cm);
\draw[fill=black] (7.5,-2.5) circle (0.1cm);
\draw[fill=black] (8.5,-2.5) circle (0.1cm);
\draw[fill=black] (9.5,-2.5) circle (0.1cm)node[above,xshift=15pt]{$D^B_2$};

\draw[fill=black] (8.5,-3.5) circle (0.1cm);
\draw[fill=black] (9.5,-3.5) circle (0.1cm);

\draw[fill=black] (8.5,-4.5) circle (0.1cm);
\draw[fill=black] (9.5,-4.5) circle (0.1cm)node[above,xshift=15pt]{$D^B_3$};

\draw[fill=black] (8.5,-5.5) circle (0.1cm);
\draw[fill=black] (9.5,-5.5) circle (0.1cm);

\draw[fill=black] (8.5,-6.5) circle (0.1cm);
\draw[fill=black] (9.5,-6.5) circle (0.1cm)node[above,xshift=15pt]{$D^B_4$};

\draw[fill=black] (8.5,-7.5) circle (0.1cm);
\draw[fill=black] (9.5,-7.5) circle (0.1cm);

\draw[fill=black] (8.5,-8.5) circle (0.1cm);
\draw[fill=black] (9.5,-8.5) circle (0.1cm)node[above,xshift=15pt]{$D^B_5$};
\end{tikzpicture} 
\end{center}
Here the diagram $\mathcal{F}$ appears first with the columns highlighted, then with the
rows, and finally with the diagonals.
\end{example}

Informally speaking, the aim is to construct MFD codes on block Ferrers diagrams by inserting MSRD codes on the diagonals of the diagram. Here we formalize this idea.
\begin{construction}\label{con:MSRD}
Let $\mathcal{F}$ be an $m$-block Ferrers diagram of order $sm$, and let $d\in[sm]$. We define $D^B_{\max{}}:=\underset{i\in[s]}{\max}|D^B_i \cap\mathbf{B}(\mathcal{F})|$ and let $q$ be a prime power with
$q\ge{D^B_{\max}+1}$.
Let $A_j\subseteq (\mathbb{F}_q^{m\times m})^j$ be an $[(m\times m)^j,m(mj-d+1),d]_q$ MSRD code for each $j\in\{\lceil\frac{d}{m}\rceil,\dots,D^B_{\max}\}$ and $A_j:=\{0\}$ for all $j\in[\lceil\frac{d}{m}\rceil-1]$. Now we can define the rank-metric code $\mc{C}$
\begin{equation*}
    \mc{C}:=\big{\{}C\in\mathbb{F}_q^{\mc{F}}\,:\,\text{the i-th block diagonal of } C \text{ is a codeword of }{A_{|D^B_i \cap\mathbf{B}(\mathcal{F})|}},\forall i \in[s]\big{\}}.
\end{equation*}
\end{construction}
Using similar arguments as in \cite[ Lemma 4]{bib:etzion}, we have the following.
\begin{lemma}
\label{le:dimMSRD}
    Construction \ref{con:MSRD} provides for all $q\ge{D^B_{\max}+1}$ a linear $[\mathcal{F},k,d]_q$ Ferrers diagram rank-metric code $\mc{C}$ of dimension
    \begin{equation*}
        k=m\Bigg{(}\sum_{i=1}^s\max{}\{0,m|D^B_i \cap\mathbf{B}(\mathcal{F})|-d+1\}\Bigg{)}.
    \end{equation*}
\end{lemma}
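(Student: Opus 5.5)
The proof has three parts: linearity and support, the dimension count, and the minimum distance bound.

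\emph{Linearity and support.} Every element of $\mc C$ lies in $\mathbb{F}_q^{\mc F}$ by definition. The condition on each block diagonal is a linear condition, because each $A_j$ is a linear code. Hence $\mc C$ is an $\mathbb{F}_q$-subspace of $\mathbb{F}_q^{\mc F}$. The hypothesis $q\ge D^B_{\max}+1$ is exactly what Theorem~\ref{th:MSRD} needs to guarantee that MSRD codes $A_j$ of length $j\le D^B_{\max}\le q-1$ blocks exist, so the construction is well defined.

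\emph{Dimension.} Since $\mc F$ is an $m$-block diagram, $\mathbb{F}_q^{\mc F}$ decomposes as a direct sum over $i\in[s]$ of the spaces of matrices supported on the blocks of $D^B_i\cap\mathbf B(\mc F)$. Each such space is identified with $(\mathbb{F}_q^{m\times m})^{t_i}$, where $t_i=|D^B_i\cap\mathbf B(\mc F)|$. The blocks of distinct block diagonals are disjoint, and every block of $\mathbf B(\mc F)$ lies on exactly one block diagonal. Therefore $\mc C\cong\bigoplus_i A_{t_i}$, and $\dim\mc C=\sum_i\dim A_{t_i}$. If $t_i\ge\lceil d/m\rceil$, then $\dim A_{t_i}=m(mt_i-d+1)$, which is positive. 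Otherwise $mt_i<d$, so $A_{t_i}=\{0\}$ and $\max\{0,mt_i-d+1\}=0$. Summing over $i$ gives the stated formula for $k$.

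\emph{Minimum distance.} This is the main step, and I would adapt the argument of \cite[Lemma 4]{bib:etzion}. Take a nonzero $C\in\mc C$ and let $i$ be the smallest index such that the $i$-th block diagonal of $C$ is nonzero. The block diagonals are ordered so that $D^B_1$ is the top-right block and $D^B_s$ is the main block diagonal. Thus all blocks $Q_{l,j}$ with $j-l>s-i$, i.e.\ those strictly to the upper right of $D^B_i$, vanish in $C$.

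Now consider the submatrix $M$ of $C$ formed by the block rows $l$ and block columns $s-i+l$ for $l\in[i]$ (or just those indices where the block lies in $\mathbf B(\mc F)$; blocks outside $\mc F$ are zero anyway). After permuting columns suitably, $M$ is block upper triangular with respect to the blocks $Q_{l,s-i+l}$. Concretely, in block row $l$ the only possibly nonzero blocks among these columns are those in columns $s-i+l'$ with $l'\le l$. With rows ordered by $l$ and columns reordered by decreasing $l'$, this is a block anti/upper triangular shape whose diagonal blocks are $C_{Q_{l,s-i+l}}$.

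The standard fact $\rk\begin{pmatrix}X&Y\\0&Z\end{pmatrix}\ge\rk X+\rk Z$, applied inductively, gives
\[
\rk C\ \ge\ \rk M\ \ge\ \sum_{l}\rk\bigl(C_{Q_{l,s-i+l}}\bigr),
\]
and the right-hand side is the sum-rank weight of the $i$-th block diagonal. This diagonal is a nonzero codeword of $A_{t_i}$, so its weight is at least $d$. (Note that $A_{t_i}\neq\{0\}$ in this case, because the diagonal is nonzero.) Hence $\rk C\ge d$.

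The delicate point is arranging the block triangular structure correctly: blocks above $D^B_i$ must vanish, while blocks below it are arbitrary. I would verify carefully, via the indexing $j-l$ of block positions, that with rows in increasing order and the chosen columns in increasing order, the zero blocks sit on the correct side. Then the rank inequality for block triangular matrices applies.
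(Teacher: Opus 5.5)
Your proposal is correct and follows the paper's proof. The MSRD codes exist by Theorem~\ref{th:MSRD} since $q\ge D^B_{\max}+1$, the dimension is the sum of the $\dim A_{t_i}$, and the distance bound comes from the block-triangular rank argument of \cite[Lemma 4]{bib:etzion} applied to an extremal nonzero block diagonal. Like the paper, you tacitly need codewords to vanish on blocks of $\mathcal{F}$ strictly below the main block diagonal, since those blocks lie on no $D^B_i$; your claim that every block of $\mathbf{B}(\mathcal{F})$ lies on exactly one block diagonal holds only under that convention.

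The one difference is that you take the topmost nonzero block diagonal, whereas the paper takes the bottommost one. With block rows and block columns both in increasing order, your submatrix is therefore block \emph{lower} triangular, and you should use $\rk\begin{pmatrix}X&0\\Y&Z\end{pmatrix}\ge\rk X+\rk Z$, which is the transpose of the displayed inequality. Reversing only the column order, as you suggest at one point, would instead put the blocks of $D^B_i$ on the block anti-diagonal.
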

\begin{proof}
    Since $D^B_{\max}\leq{q-1}$, we have $|D^B_i \cap\mathbf{B}(\mathcal{F})|\leq{q-1}$ for all $i\in[s]$, then Theorem \ref{th:MSRD} can be applied. Hence, for all $i\in[s]$ such that $|D^B_i \cap\mathbf{B}(\mathcal{F})|\ge \lceil\frac{d}{m}\rceil$, there exists an MSRD code over $\mathbb{F}_q$ of length $|D^B_i \cap\mathbf{B}(\mathcal{F})|$ and dimension
    \begin{align*}
        k_i=&m\Bigg{(}\sum_{j=1}^{|D^B_i \cap\mathbf{B}(\mathcal{F})|}m-d+1\Bigg{)}\\
        =&m(m|D^B_i \cap\mathbf{B}(\mathcal{F})|-d+1).
    \end{align*}
    The code $\mc{C}$ is linear since all MSRD codes are linear and the dimension of the resulting code is the sum of the MSRD codes' dimensions, then
    \begin{equation*}
        k=m\Bigg{(}\sum_{i=1}^s\max\{0,m|D^B_i \cap\mathbf{B}(\mathcal{F})|-d+1\}\Bigg{)}.
    \end{equation*}
    In order to compute the minimum distance, it is sufficient to compute the minimum rank weight of any non-zero codeword.

    Observe that
    \[\rk
    \begin{pmatrix}
        A&B\\
        0&D
    \end{pmatrix}\ge{\rk(A)+\rk(D)},
    \]
    for any three matrices $A,B$, and $D$ of suitable sizes. To apply this reasoning, cut any matrix recursively into smaller matrices, so the rank of each element of $\mc{C}$ is bounded from below by the sum-rank weight of the bottommost non-zero block diagonal. Since every non-zero block diagonal is a non-zero codeword of an MSRD code of minimum sum-rank distance $d$, then the rank of any non-zero codeword is at least $d$.
\end{proof}

\begin{definition}
    Let $\mathcal{F}$ be an $m$-block Ferrers diagram of order $sm$ and $d\in[sm]$. We say that a pair $(\mc{F},d)$ is \textit{MSRD-constructible} if
    \begin{equation*}
        \nu_{\min}(\mathcal{F},d)=m\sum_{i=1}^s\max\{m|D^B_i \cap\mathbf{B}(\mathcal{F})|-d+1,0\}.
    \end{equation*}
\end{definition}
From Lemma \ref{le:dimMSRD} we have the following optimality result for MSRD-constructible pairs.
\begin{theorem}
\label{th:optMSRD}
   Let $\mc{F}$ be an $m$-block Ferrers diagram of order $sm$ and $d\in[sm]$ such that $(\mc{F},d)$ is MSRD-constructible. Then, Construction \ref{con:MSRD} gives an $[\mathcal{F},{\nu_{\min}(\mathcal{F},d)},d]_q$ MFD code for every $q\ge D^B_{\max}+1$. 
\end{theorem}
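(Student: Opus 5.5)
The plan is to combine Lemma~\ref{le:dimMSRD} with the definition of MSRD-constructibility and the Etzion--Silberstein bound (Theorem~\ref{th:dimension}).

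First, fix $q\ge D^B_{\max}+1$. Lemma~\ref{le:dimMSRD} then says that Construction~\ref{con:MSRD} yields a linear code $\mc{C}\subseteq\mathbb{F}_q^{\mc F}$ with minimum rank distance at least $d$. Its dimension is
\begin{equation*}
k=m\sum_{i=1}^s\max\{0,m|D^B_i\cap\mathbf{B}(\mc F)|-d+1\}.
\end{equation*}
Second, since $(\mc F,d)$ is MSRD-constructible, the right-hand side equals $\nu_{\min}(\mc F,d)$. Hence $k=\nu_{\min}(\mc F,d)$.

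The only point requiring care is that the lemma's distance argument gives $\min\rk\ge d$ rather than exactly $d$. This happens because each nonzero codeword has rank at least the sum-rank weight of its bottommost nonzero block diagonal, and that weight is at least $d$.

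\begin{itemize}
\item If the minimum distance $d'$ of $\mc{C}$ were strictly larger than $d$, then $\mc{C}$ would be an $[\mc F,k,d']_q$ code. Theorem~\ref{th:dimension} would then give $k\le\nu_{\min}(\mc F,d')$.
\item Since $\nu_j(\mc F,d)$ decreases strictly in $d$ whenever it is positive, we have $\nu_{\min}(\mc F,d')<\nu_{\min}(\mc F,d)=k$ in the case $k>0$. This is a contradiction, so the minimum distance is exactly $d$.
\item In the degenerate case $k=0$, the code is the zero code. It trivially meets the bound $\nu_{\min}(\mc F,d)=0$ and is vacuously MFD.
\end{itemize}

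Alternatively, one can simply read an $[\mc F,k,d]_q$ code as a code with minimum distance at least $d$. Under that reading the statement follows at once, since $k=\nu_{\min}(\mc F,d)$ is exactly the MFD condition. I expect no real obstacle beyond this bookkeeping about the exact distance.
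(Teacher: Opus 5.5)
Your proposal is correct and follows the paper's argument: the paper presents the theorem as an immediate consequence of Lemma~\ref{le:dimMSRD}, whose dimension formula equals $\nu_{\min}(\mc{F},d)$ by the definition of MSRD-constructibility. Your extra check that the minimum distance is exactly $d$, via Theorem~\ref{th:dimension} and the strict decrease of $\nu_j(\mc{F},\cdot)$ when positive, is valid bookkeeping that the paper omits; it can also be seen directly, since a codeword supported on a single block diagonal has rank equal to its sum-rank weight, so a minimum-weight MSRD codeword gives a matrix of rank exactly $d$.
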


\begin{example}

Let $\mc{F}=[[1,1,3,4]]_3$ be a $3$-block Ferrers diagram and $d=5$ the desired minimum distance. It is easy to see that $\num{F}{5}=\nu_1(\mc{F},5)=36$, whereas
\begin{equation*}
    m\sum_{i=1}^s\max\{m|D^B_i \cap\mathbf{B}(\mathcal{F})|-5+1,0\}=3(5+5+2)=36=\nu_{\min}(\mathcal{F},5).
\end{equation*}
Therefore, $(\mc{F},5)$ is MSRD-constructible.
\begin{center}
\begin{tikzpicture}[scale=0.35]
\draw[help lines, very thick, white, fill=teal!5] (0,1) -- (0,-11) -- (12,-11)--(12,1)--(0,1);

\draw[help lines, dashed] (3,0.95) -- (3,-10.95);
\draw[help lines, dashed] (0.05,-2) -- (11.95,-2);
\draw[help lines, dashed] (6,0.95) -- (6,-10.95);
\draw[help lines, dashed] (9,0.95) -- (9,-10.95);
\draw[help lines, dashed] (0.05,-5) -- (11.95,-5);
\draw[help lines, dashed] (0.05,-8) -- (11.95,-8);
  
\draw[fill=black] (0.5,0.5) circle (0.1cm);
\draw[fill=black] (1.5,0.5) circle (0.1cm);
\draw[fill=black] (2.5,0.5) circle (0.1cm);
\draw[fill=black] (3.5,0.5) circle (0.1cm);
\draw[fill=black] (4.5,0.5) circle (0.1cm);
\draw[fill=black] (5.5,0.5) circle (0.1cm);
\draw[fill=black] (6.5,0.5) circle (0.1cm);
\draw[fill=black] (7.5,0.5) circle (0.1cm);
\draw[fill=black] (8.5,0.5) circle (0.1cm);
\draw[fill=black] (9.5,0.5) circle (0.1cm);
\draw[fill=black] (10.5,0.5) circle (0.1cm);
\draw[fill=black] (11.5,0.5) circle (0.1cm);

\draw[fill=black] (0.5,-0.5) circle (0.1cm);
\draw[fill=black] (1.5,-0.5) circle (0.1cm);
\draw[fill=black] (2.5,-0.5) circle (0.1cm);
\draw[fill=black] (3.5,-0.5) circle (0.1cm);
\draw[fill=black] (4.5,-0.5) circle (0.1cm);
\draw[fill=black] (5.5,-0.5) circle (0.1cm);
\draw[fill=black] (6.5,-0.5) circle (0.1cm);
\draw[fill=black] (7.5,-0.5) circle (0.1cm);
\draw[fill=black] (8.5,-0.5) circle (0.1cm);
\draw[fill=black] (9.5,-0.5) circle (0.1cm);
\draw[fill=black] (10.5,-0.5) circle (0.1cm);
\draw[fill=black] (11.5,-0.5) circle (0.1cm);

\draw[fill=black] (0.5,-1.5) circle (0.1cm);
\draw[fill=black] (1.5,-1.5) circle (0.1cm);
\draw[fill=black] (2.5,-1.5) circle (0.1cm);
\draw[fill=black] (3.5,-1.5) circle (0.1cm);
\draw[fill=black] (4.5,-1.5) circle (0.1cm);
\draw[fill=black] (5.5,-1.5) circle (0.1cm);
\draw[fill=black] (6.5,-1.5) circle (0.1cm);
\draw[fill=black] (7.5,-1.5) circle (0.1cm);
\draw[fill=black] (8.5,-1.5) circle (0.1cm);
\draw[fill=black] (9.5,-1.5) circle (0.1cm);
\draw[fill=black] (10.5,-1.5) circle (0.1cm);
\draw[fill=black] (11.5,-1.5) circle (0.1cm);

\draw[fill=black] (6.5,-2.5) circle (0.1cm);
\draw[fill=black] (7.5,-2.5) circle (0.1cm);
\draw[fill=black] (8.5,-2.5) circle (0.1cm);
\draw[fill=black] (9.5,-2.5) circle (0.1cm);
\draw[fill=black] (10.5,-2.5) circle (0.1cm);
\draw[fill=black] (11.5,-2.5) circle (0.1cm);

\draw[fill=black] (6.5,-3.5) circle (0.1cm);
\draw[fill=black] (7.5,-3.5) circle (0.1cm);
\draw[fill=black] (8.5,-3.5) circle (0.1cm);
\draw[fill=black] (9.5,-3.5) circle (0.1cm);
\draw[fill=black] (10.5,-3.5) circle (0.1cm);
\draw[fill=black] (11.5,-3.5) circle (0.1cm);

\draw[fill=black] (6.5,-4.5) circle (0.1cm);
\draw[fill=black] (7.5,-4.5) circle (0.1cm);
\draw[fill=black] (8.5,-4.5) circle (0.1cm);
\draw[fill=black] (9.5,-4.5) circle (0.1cm);
\draw[fill=black] (10.5,-4.5) circle (0.1cm);
\draw[fill=black] (11.5,-4.5) circle (0.1cm);

\draw[fill=black] (6.5,-5.5) circle (0.1cm);
\draw[fill=black] (7.5,-5.5) circle (0.1cm);
\draw[fill=black] (8.5,-5.5) circle (0.1cm);
\draw[fill=black] (9.5,-5.5) circle (0.1cm);
\draw[fill=black] (10.5,-5.5) circle (0.1cm);
\draw[fill=black] (11.5,-5.5) circle (0.1cm);

\draw[fill=black] (6.5,-6.5) circle (0.1cm);
\draw[fill=black] (7.5,-6.5) circle (0.1cm);
\draw[fill=black] (8.5,-6.5) circle (0.1cm);
\draw[fill=black] (9.5,-6.5) circle (0.1cm);
\draw[fill=black] (10.5,-6.5) circle (0.1cm);
\draw[fill=black] (11.5,-6.5) circle (0.1cm);

\draw[fill=black] (6.5,-7.5) circle (0.1cm);
\draw[fill=black] (7.5,-7.5) circle (0.1cm);
\draw[fill=black] (8.5,-7.5) circle (0.1cm);
\draw[fill=black] (9.5,-7.5) circle (0.1cm);
\draw[fill=black] (10.5,-7.5) circle (0.1cm);
\draw[fill=black] (11.5,-7.5) circle (0.1cm);

\draw[fill=black] (9.5,-8.5) circle (0.1cm);
\draw[fill=black] (10.5,-8.5) circle (0.1cm);
\draw[fill=black] (11.5,-8.5) circle (0.1cm);

\draw[fill=black] (9.5,-9.5) circle (0.1cm);
\draw[fill=black] (10.5,-9.5) circle (0.1cm);
\draw[fill=black] (11.5,-9.5) circle (0.1cm);

\draw[fill=black] (9.5,-10.5) circle (0.1cm);
\draw[fill=black] (10.5,-10.5) circle (0.1cm);
\draw[fill=black] (11.5,-10.5) circle (0.1cm);
\end{tikzpicture} \qquad 
\begin{tikzpicture}[scale=0.35]
% Sfondo e griglia
\draw[help lines, very thick, white, fill=teal!5] (0,1) -- (0,-11) -- (12,-11)--(12,1)--(0,1);
\draw[help lines, dashed] (3,0.95) -- (3,-10.95);
\draw[help lines, dashed] (6,0.95) -- (6,-10.95);
\draw[help lines, dashed] (9,0.95) -- (9,-10.95);
\draw[help lines, dashed] (0.05,-2) -- (11.95,-2);
\draw[help lines, dashed] (0.05,-5) -- (11.95,-5);
\draw[help lines, dashed] (0.05,-8) -- (11.95,-8);

% Etichette di riga
\node[right] at (12,-0.7) {$D^B_1$};
\node[right] at (12,-3.7) {$D^B_2$};
\node[right] at (12,-6.7) {$D^B_3$};
\node[right] at (12,-9.7) {$D^B_4$};

% Comando per punto
\newcommand{\dotuno}[2]{\draw[fill=black] (#1,#2) circle (0.1cm);}

% Riga 1
\foreach \x in {0.5,1.5,...,11.5} { \dotuno{\x}{0.5} }
% Riga 2
\foreach \x in {0.5,1.5,...,11.5} { \dotuno{\x}{-0.5} }
% Riga 3
\foreach \x in {0.5,1.5,...,11.5} { \dotuno{\x}{-1.5} }

% Riga 4–8 (colonne da 6.5 a 11.5)
\foreach \y in {-2.5,-3.5,-4.5,-5.5,-6.5,-7.5} {
  \foreach \x in {6.5,7.5,8.5,9.5,10.5,11.5} {
    \dotuno{\x}{\y}
  }
}

% Riga 9–11 (colonne da 9.5 a 11.5)
\foreach \y in {-8.5,-9.5,-10.5} {
  \foreach \x in {9.5,10.5,11.5} {
    \dotuno{\x}{\y}
  }
}

% Rettangoli 3x3 con margine 0.1
\draw[blue,thick] (0.1, -1.9) rectangle (2.9, 0.9);
\draw[orange,thick] (3.1, -1.9) rectangle (5.9, 0.9);
\draw[green,thick] (6.1, -1.9) rectangle (8.9, 0.9);
% \draw[red,thick] (9.1, -1.9) rectangle (11.9, 0.9);

\draw[orange,thick] (6.1, -4.9) rectangle (8.9, -2.1);
\draw[green,thick] (9.1, -4.9) rectangle (11.9, -2.1);

\draw[blue,thick] (6.1, -7.9) rectangle (8.9, -5.1);
\draw[orange,thick] (9.1, -7.9) rectangle (11.9, -5.1);

\draw[blue,thick] (9.1, -10.9) rectangle (11.9, -8.1);
\end{tikzpicture}
\end{center}
The block diagonals that contribute to $\num{F}{5}$ are $D^B_2$, $D^B_3$ and $D^B_4$. Thus, following Construction \ref{con:MSRD}, we find that $D^B_{\max}=|D^B_4|=3$ and so we can take as the field size $q\geq4$. Now place on $D^B_3$ and $D^B_4$ a $[(3\times 3)^3,15,5]_q$ MSRD code and on $D^B_2$ a $[(3\times 3)^2,6,5]_q$ MSRD code. Hence, the obtained matrix subspace is an $[\mathcal{F},{\nu_{\min}(\mathcal{F},5)},5]_q$ MFD code, since its dimension is:
\begin{equation*}
    k=15+15+6=36=\num{F}{5}.
\end{equation*}
\end{example}

\begin{remark}
    Construction~\ref{con:MSRD} can be extended to the case of non-square blocks. Let
$n_1,\ldots,n_s$ and $m$ be positive integers such that $n_i\le m$ for every
$i=1,\ldots,s$. Consider a Ferrers diagram of the form
\[
\mathcal{F}
=
[\overbrace{c_1,\ldots,c_1}^{n_1\text{ times}},
\overbrace{c_2,\ldots,c_2}^{n_2\text{ times}},
\ldots,
\overbrace{c_s,\ldots,c_s}^{n_s\text{ times}}],
\]
where $c_1\le c_2\le\cdots\le c_s\le sm$ and $m\mid c_i$ for every
$i\in [s]$. 
Writing $c_i=\ell_i m$, the $n_i$ columns of length $c_i$ are naturally partitioned into $\ell_i$ rectangular blocks of size $m\times n_i$. Thus, the Ferrers diagram is no longer decomposed into square blocks of size $m\times m$, but into rectangular blocks of size $m\times n_i$, for $i\in [s]$.

Accordingly, the construction can be extended by placing MSRD codes (e.g. linearized Reed-Solomon codes) defined
over spaces of the form
\[
\bigoplus_{i\in J}\mathbb{F}_q^{m\times n_i}, \qquad J\subseteq [s],
\] with $q$ sufficiently large (for instance, $q>s$), on the corresponding diagonal blocks of the Ferrers diagram. The notion of an MSRD-constructible pair, as well as the proof of
optimality of the construction, extends to this more general setting in a
straightforward manner.

\end{remark}

\section{MDS-constructibility and
MSRD-constructibility}\label{sec:MDSvsMSRD-constructibility}

In this section, we investigate the connection between
MSRD-constructibility of a block Ferrers diagram and the MDS-constructibility of its \emph{contraction}.

We first rigorously define the notion of \textit{expanding a dot to a block or contracting a block to a dot}. 
\begin{definition}
Let $\mathcal{P}([s]^2)$ and $\mathcal{P}([sm]^2)$ be the powerset of $[s]^2$ and $[sm]^2$, respectively. Moreover, let $\mathcal{BP}([sm]^2):=\{\,S\in \mathcal{P}([sm]^2)\,:\,S\cap Q^{(m)}_{i,j}=\emptyset \text{ or }Q^{(m)}_{i,j}\text{ for every } i,j\in[s]\,\}$, the set of all subsets of $[sm]^2$ that are unions of $m\times m$ blocks. We define the map
 \begin{align*}    
 \phi_m:\quad\mathcal{P}([s]^2)&\to\mathcal{BP}([sm]^2)\\
 S&\mapsto\bigcup_{(i,j)\in S} Q_{i,j}^{(m)} .
    \end{align*}
    If $\mathcal{F'}$ is an $m$-block Ferrers diagram of order $sm$ such that $\phi_m(\mathcal{F})=\mathcal{F'}$, then we will say that $\mathcal{F'}$ is the $m$\textit{-block version} of $\mathcal{F}$ and we will write $\mc{F}^m:=\phi_m(\mc{F})$.
\end{definition}
    The map $\phi_m$ admits as inverse the map $\phi_m^{-1}$
\begin{align*}    
\phi_m^{-1}:\quad\mathcal{BP}([sm]^2)&\to\mathcal{P}([s]^2)\\
 S&\mapsto\{\,(i,j)\,:\, S \cap Q_{i,j}^{(m)}=Q_{i,j}^{(m) }\,\}
\end{align*}
If $\mc{F}$ is an $m$-block Ferrers diagram of order $sm$, then the diagram of order $s$ given by $\phi_m^{-1}(\mc{F})$ is called the $m$\textit{-contraction} of $\mc{F}$.

\begin{remark}
In \cite{bib:neri}, the authors investigated block Ferrers diagrams, $\mc{F}$, for which $m=p^h$ for some prime number $p$ and integer $h$. The associated Ferrers diagram to the block one was called $p$-contraction, and denoted by $\mc{F}^{(p)}$. In particular, let $\mc{F}$ be an $m$-block Ferrers diagram. If $m$ is a prime power, i.e. $m=p^h$ for some $p$ prime and an integer $h>0$, then
    \begin{equation*}
        \mc{F}^{(p)}=\phi_m^{-1}(\mc{F}).
    \end{equation*}
\end{remark}

We clarify here the action of $\phi_{m}$ using a small example.

\begin{example} A Ferrers diagram $\mc{F}=[1,2,2,4]$ and its $3$-block version $\phi_3(\mc{F})=[[1,2,2,4]]_3$.
\begin{center}
\begin{tikzpicture}[scale=0.4]
\draw[help lines, very thick, white, fill=blue!10] (1,1) -- (1,-3) -- (5,-3)--(5,1)--(1,1);

\draw[fill=black] (1.5,0.5) circle (0.1cm);
\draw[fill=black] (2.5,0.5) circle (0.1cm);
\draw[fill=black] (3.5,0.5) circle (0.1cm);
\draw[fill=black] (4.5,0.5) circle (0.1cm);

\draw[fill=black] (2.5,-0.5) circle (0.1cm);
\draw[fill=black] (3.5,-0.5) circle (0.1cm);
\draw[fill=black] (4.5,-0.5) circle (0.1cm);

\draw[fill=black] (4.5,-1.5) circle (0.1cm);

\draw[fill=black] (4.5,-2.5) circle (0.1cm);
\end{tikzpicture}\quad
\begin{tikzpicture}[scale=0.3]
\draw[help lines, very thick, white, fill=teal!5] (0,1) -- (0,-11) -- (12,-11)--(12,1)--(0,1);

\draw[help lines, dashed] (3,1) -- (3,-11);
\draw[help lines, dashed] (0,-2) -- (12,-2);
\draw[help lines, dashed] (6,1) -- (6,-11);
\draw[help lines, dashed] (9,1) -- (9,-11);
\draw[help lines, dashed] (0,-5) -- (12,-5);
\draw[help lines, dashed] (0,-8) -- (12,-8);
  
\draw[fill=black] (0.5,0.5) circle (0.1cm);
\draw[fill=black] (1.5,0.5) circle (0.1cm);
\draw[fill=black] (2.5,0.5) circle (0.1cm);
\draw[fill=black] (3.5,0.5) circle (0.1cm);
\draw[fill=black] (4.5,0.5) circle (0.1cm);
\draw[fill=black] (5.5,0.5) circle (0.1cm);
\draw[fill=black] (6.5,0.5) circle (0.1cm);
\draw[fill=black] (7.5,0.5) circle (0.1cm);
\draw[fill=black] (8.5,0.5) circle (0.1cm);
\draw[fill=black] (9.5,0.5) circle (0.1cm);
\draw[fill=black] (10.5,0.5) circle (0.1cm);
\draw[fill=black] (11.5,0.5) circle (0.1cm);

\draw[fill=black] (0.5,-0.5) circle (0.1cm);
\draw[fill=black] (1.5,-0.5) circle (0.1cm);
\draw[fill=black] (2.5,-0.5) circle (0.1cm);
\draw[fill=black] (3.5,-0.5) circle (0.1cm);
\draw[fill=black] (4.5,-0.5) circle (0.1cm);
\draw[fill=black] (5.5,-0.5) circle (0.1cm);
\draw[fill=black] (6.5,-0.5) circle (0.1cm);
\draw[fill=black] (7.5,-0.5) circle (0.1cm);
\draw[fill=black] (8.5,-0.5) circle (0.1cm);
\draw[fill=black] (9.5,-0.5) circle (0.1cm);
\draw[fill=black] (10.5,-0.5) circle (0.1cm);
\draw[fill=black] (11.5,-0.5) circle (0.1cm);

\draw[fill=black] (0.5,-1.5) circle (0.1cm);
\draw[fill=black] (1.5,-1.5) circle (0.1cm);
\draw[fill=black] (2.5,-1.5) circle (0.1cm);
\draw[fill=black] (3.5,-1.5) circle (0.1cm);
\draw[fill=black] (4.5,-1.5) circle (0.1cm);
\draw[fill=black] (5.5,-1.5) circle (0.1cm);
\draw[fill=black] (6.5,-1.5) circle (0.1cm);
\draw[fill=black] (7.5,-1.5) circle (0.1cm);
\draw[fill=black] (8.5,-1.5) circle (0.1cm);
\draw[fill=black] (9.5,-1.5) circle (0.1cm);
\draw[fill=black] (10.5,-1.5) circle (0.1cm);
\draw[fill=black] (11.5,-1.5) circle (0.1cm);

\draw[fill=black] (3.5,-2.5) circle (0.1cm);
\draw[fill=black] (4.5,-2.5) circle (0.1cm);
\draw[fill=black] (5.5,-2.5) circle (0.1cm);
\draw[fill=black] (6.5,-2.5) circle (0.1cm);
\draw[fill=black] (7.5,-2.5) circle (0.1cm);
\draw[fill=black] (8.5,-2.5) circle (0.1cm);
\draw[fill=black] (9.5,-2.5) circle (0.1cm);
\draw[fill=black] (10.5,-2.5) circle (0.1cm);
\draw[fill=black] (11.5,-2.5) circle (0.1cm);

\draw[fill=black] (3.5,-3.5) circle (0.1cm);
\draw[fill=black] (4.5,-3.5) circle (0.1cm);
\draw[fill=black] (5.5,-3.5) circle (0.1cm);
\draw[fill=black] (6.5,-3.5) circle (0.1cm);
\draw[fill=black] (7.5,-3.5) circle (0.1cm);
\draw[fill=black] (8.5,-3.5) circle (0.1cm);
\draw[fill=black] (9.5,-3.5) circle (0.1cm);
\draw[fill=black] (10.5,-3.5) circle (0.1cm);
\draw[fill=black] (11.5,-3.5) circle (0.1cm);

\draw[fill=black] (3.5,-4.5) circle (0.1cm);
\draw[fill=black] (4.5,-4.5) circle (0.1cm);
\draw[fill=black] (5.5,-4.5) circle (0.1cm);
\draw[fill=black] (6.5,-4.5) circle (0.1cm);
\draw[fill=black] (7.5,-4.5) circle (0.1cm);
\draw[fill=black] (8.5,-4.5) circle (0.1cm);
\draw[fill=black] (9.5,-4.5) circle (0.1cm);
\draw[fill=black] (10.5,-4.5) circle (0.1cm);
\draw[fill=black] (11.5,-4.5) circle (0.1cm);

\draw[fill=black] (9.5,-5.5) circle (0.1cm);
\draw[fill=black] (10.5,-5.5) circle (0.1cm);
\draw[fill=black] (11.5,-5.5) circle (0.1cm);

\draw[fill=black] (9.5,-6.5) circle (0.1cm);
\draw[fill=black] (10.5,-6.5) circle (0.1cm);
\draw[fill=black] (11.5,-6.5) circle (0.1cm);

\draw[fill=black] (9.5,-7.5) circle (0.1cm);
\draw[fill=black] (10.5,-7.5) circle (0.1cm);
\draw[fill=black] (11.5,-7.5) circle (0.1cm);

\draw[fill=black] (9.5,-8.5) circle (0.1cm);
\draw[fill=black] (10.5,-8.5) circle (0.1cm);
\draw[fill=black] (11.5,-8.5) circle (0.1cm);

\draw[fill=black] (9.5,-9.5) circle (0.1cm);
\draw[fill=black] (10.5,-9.5) circle (0.1cm);
\draw[fill=black] (11.5,-9.5) circle (0.1cm);

\draw[fill=black] (9.5,-10.5) circle (0.1cm);
\draw[fill=black] (10.5,-10.5) circle (0.1cm);
\draw[fill=black] (11.5,-10.5) circle (0.1cm);
\end{tikzpicture}
\end{center}

\end{example}

\subsection{Relations between $\nu_{\min}$ of $\mc{F}$ and $\mc{F}^m$}
We begin our analysis investigating how $\nu_{\min}$ of a Ferrers diagram $\mc{F}$ is related to that of its block version $\mc{F}^m$.

\begin{lemma}
\label{le:nublock}
Let $\mc{F}$ be an $m$-block Ferrers diagram of order $sm$. Let $d\in[sm]$ and $\delta\in[s]$ be such that $d-1=m(\delta-1)+r$, with $r\in\{0,\ldots,m-1\}$. Then
    \begin{equation*}
        \nu_{\min}(\mc{F},d) =\min\{\,\nu_{km}(\mc{F},d), \nu_{km+r}(\mc{F},d)\,:\, k \in \{\,0,\ldots,\delta-1\,\}\,\}.
    \end{equation*}
\end{lemma}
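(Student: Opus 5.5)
The plan is to study $j\mapsto\nu_j(\mc F,d)$ on $\{0,\ldots,d-1\}$ and show that, between consecutive points of the set $J:=\{km,\,km+r: 0\le k\le\delta-1\}$, it is concave. Its minimum over each such interval is then attained at an endpoint, which lies in $J$. Note that $J\subseteq\{0,\ldots,d-1\}$, since $(\delta-1)m+r=d-1$. This gives "$\le$" in the statement; "$\ge$" is trivial.

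We use the description $\nu_j(\mc F,d)=|\mc F\cap\mc S_{sm,d,j}|$. Set $a:=d-1-j$, the number of deleted top rows, and $b:=j$, the number of deleted right columns, so that $a+b=d-1$. Increasing $j$ by one shifts one unit from $a$ to $b$, giving
\[
\nu_{j+1}-\nu_j=\rho_{a}(b)-\gamma_{b+1}(a-1),
\]
where $\rho_a(b)$ is the number of dots of row $a$ lying in columns $1,\ldots,sm-b$, and $\gamma_{b+1}(a-1)$ is the number of dots of column $sm-b$ lying in rows $a,\ldots,sm$. (I will verify the exact indexing, and the overlap cell at position $(a,sm-b)$, in the write-up.) Since $\mc F$ is an $m$-block Ferrers diagram, the row length of row $a$ depends only on the block row $\lceil a/m\rceil$. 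Likewise the column height of column $sm-b$ depends only on its block column. Hence within a range of $j$ where neither $a$ nor $sm-b$ crosses a block boundary, the increment takes the form $(L-\text{const}+b)-(H-\text{const}'+a)$ for block-constant $L,H$. It is therefore $2j+\text{const}$, which is nondecreasing in $j$. So $\nu_j$ is convex in $j$ on such a range. Convexity alone would put the minimum in the interior, so I must examine the truncations $\max\{0,\cdot\}$ more carefully.

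The cleaner way, which I would actually pursue, is to write
\[
\nu_j=\sum_{i\le sm-j}\max\{0,c_i-a\}
\]
and treat $j$ as a continuous variable on a piece between consecutive breakpoints of $J$. On $j\in[km,km+r]$, $b$ stays within block column $s-k$ and $a$ stays within block row $\delta-k$. Changing $j$ by one removes column $sm-j$ (height $c_{sm-j}$, block-constant) and raises the threshold for all remaining columns by one. The result is a piecewise expression whose decrements are sums of terms $-(c_{sm-j}-a)^+ + \#\{i\le sm-j-1: c_i>a\}$. The count term is constant in $j$ here, because $a$ runs through a single block row and the $c_i$ are multiples of $m$. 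The removal term is $(\text{const}+j)^+$, which is nondecreasing, so the first difference is monotone. The same holds on $[km+r,(k+1)m]$, where $a$ stays in block row $\delta-k-1$ and $b$ stays in block column $s-k$.

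The key point is that on each piece $\nu_j$ is concave: its first difference is $\#\{\cdots\}-(\text{const}+j)^+$, which is nonincreasing. A concave function on an integer interval attains its minimum at an endpoint, and both endpoints of every piece lie in $J$.

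I expect the main obstacle to be the careful bookkeeping at the boundaries. I must show that the count $\#\{i\le sm-j-1: c_i>a\}$ really is constant on each piece, including at the endpoints where $a$ or $sm-j$ equals a multiple of $m$, with consistent conventions. I must also handle the degenerate cases $r=0$ and $r=m-1$, where the two types of breakpoints coincide or are adjacent. I would check these by directly computing $\nu_{j+1}-\nu_j$ from the column formula rather than from the picture.
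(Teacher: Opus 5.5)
Your plan is correct and rests on the same mechanism as the paper's proof, but it is organized differently. The paper uses an exchange argument. Deleting $a$ columns of a block column $C_j^B$ and $l-a$ rows of a block row $R_i^B$ removes $m|R_i^B|(l-a)+m|C_j^B|a-a(l-a)$ dots. This is at most what a pure deletion from the longer of the two removes, so a mixed deletion never beats a pure one.

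You instead show that $j\mapsto\nu_j(\mc F,d)$ has nonincreasing first differences on each interval between consecutive points of $J$, so its minimum on each interval is at an endpoint. Your version buys uniformity. The same computation handles both kinds of piece, $[km,km+r]$ and $[km+r,(k+1)m]$. On the second kind, $r$ rows or columns of the relevant blocks are already deleted, so the per-line counts are no longer $m|R_i^B|$ and $m|C_j^B|$. Your version also does not need to know whether the crossing block lies in $\mc F$. The paper's inequality is shorter and more visual, but it leaves these cases implicit. The degenerate values $r=0$ and $r=m-1$ need no special treatment, since the corresponding pieces simply have length $0$ or $1$.

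In the write-up, fix the following slips.

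\textbf{The increment.} With $a:=d-1-j$, passing from $j$ to $j+1$ restores row $a$ and deletes column $sm-j$, so
\[
\nu_{j+1}(\mc F,d)-\nu_j(\mc F,d)=\#\{i\le sm-j-1:\ c_i\ge a\}-\max\{0,c_{sm-j}-a\}.
\]
The threshold is lowered, not raised. With this convention, the condition $c_i\ge a$ depends only on the block row of $a$, because the $c_i$ are multiples of $m$. This settles the boundary bookkeeping you were worried about.

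\textbf{The sign in your first attempt.} Both signs are flipped there. If row $a$ has length $L$ and column $sm-j$ has height $H$, the increment is $\max\{0,L-j-1\}-\max\{0,H-d+1+j\}$. Before truncation this is $\mathrm{const}-2j$, so $\nu_j$ is concave, not convex. Truncation preserves both monotonicities, so that first computation already finishes the proof, with no extra care needed.

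\textbf{The count term.} It is not constant on a piece, because its range $i\le sm-j-1$ shrinks as $j$ grows; it equals the number of dots of row $a$ strictly left of column $sm-j$. It is nonincreasing, which is all you need.

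\textbf{The direction of the inequalities.} The trivial inequality is $\nu_{\min}(\mc F,d)\le\min_{j\in J}\nu_j(\mc F,d)$. Concavity gives the reverse inequality, not the other way round.
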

\begin{proof}
To prove this, it is sufficient to see that, if we want to remove $l$ rows and columns in total from an $m$-block diagram, with $1\le l\le m$, then the best strategy is always to remove all rows or all columns. Take $i,j\in[s],a\in\{0,\ldots,l\}$ and consider removing $a$ columns from $C_j^B$ and $l-a$ rows from $R_i^B$. Without loss of generality, we can assume $|R_i^B|\geq|C_j^B|$. 

Then, the number of elements removed is
\begin{align*}
m|R_i^B|(l-a)+m|C_j^B|a-a(l-a)&\leq m|R_i^B|(l-a)+m|R_i^B|a\\
&\leq m|R_i^B|l.
\end{align*}
That is, the number of elements removed if all had been taken from $R_i^B$. Thus, we obtain the statement, since, taking $k\in\{\,0,\ldots,\delta-1\,\}$:
\begin{itemize}
    \item the value $\nu_{km}(\mc{F},d)$ counts the elements left after removing $k$ block columns and $\delta-1-k$ block rows plus $r$ rows,
    \item the value $\nu_{km+r}(\mc{F},d)$ counts the elements left after removing $k$ block columns plus $r$ columns and $\delta-1-k$ block rows.
\end{itemize}
\end{proof}

When we are dealing with an $m$-block Ferrers diagram and a pair $(\mc{F},d)$ such that $d-1$ is divisible by $m$, then the value $\nu_{\min}(\mc{F},d)$ is strictly related to its contraction $\phi_m^{-1}(\mc{F})$.

\begin{lemma}
\label{le:jsingm}
Let $\mc{F}$ be a Ferrers diagram of order $s$ and $\mc{F}^m$ be its $m$-block version. Take $\delta\in[s]$ and $d\in[sm]$ such that $d-1=m(\delta-1)$. Then, for every $j\in\{\,0,\dots,\delta-1\,\}$, $(\mc{F},\delta)$ is $j$-Singleton if and only if $(\mc{F}^m,d)$ is $mj$-Singleton. In particular, it holds
\begin{equation*}
    \nu_{\min}(\mc{F}^m,d)=m^2\num{F}{\delta}.
\end{equation*}
\end{lemma}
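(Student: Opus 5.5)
The plan is to compute $\nu_{mj}(\mc{F}^m,d)$ explicitly in terms of $\nu_j(\mc{F},\delta)$, and then to use Lemma~\ref{le:nublock} to rule out every other $\nu_{j'}(\mc{F}^m,d)$ as a candidate for the minimum.

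\textbf{Step 1: the identity $\nu_{mj}(\mc{F}^m,d)=m^2\nu_j(\mc{F},\delta)$.} Fix $j\in\{0,\dots,\delta-1\}$. Since $d-1=m(\delta-1)$, computing $\nu_{mj}(\mc{F}^m,d)$ removes the last $mj$ columns and the first $d-1-mj=m(\delta-1-j)$ rows of $\mc{F}^m$. These are exactly the last $j$ block columns and the first $\delta-1-j$ block rows. Formally, $\mc{S}_{sm,d,mj}=\phi_m(\mc{S}_{s,\delta,j})$, and therefore
\[
\mc{F}^m\cap\mc{S}_{sm,d,mj}=\phi_m\bigl(\mc{F}\cap\mc{S}_{s,\delta,j}\bigr).
\]
Each block contains $m^2$ dots, so the claimed identity follows.

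\textbf{Step 2: the minimum is attained at a multiple of $m$.} Here $r=0$, so Lemma~\ref{le:nublock} gives
\[
\nu_{\min}(\mc{F}^m,d)=\min_{0\le k\le\delta-1}\nu_{km}(\mc{F}^m,d)=m^2\min_k\nu_k(\mc{F},\delta)=m^2\,\nu_{\min}(\mc{F},\delta).
\]
This proves the displayed formula.

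\textbf{Step 3: the equivalence.} If $(\mc{F},\delta)$ is $j$-Singleton, then
\[
\nu_{mj}(\mc{F}^m,d)=m^2\nu_j(\mc{F},\delta)=m^2\nu_{\min}(\mc{F},\delta)=\nu_{\min}(\mc{F}^m,d),
\]
so $(\mc{F}^m,d)$ is $mj$-Singleton. Conversely, if $\nu_{mj}(\mc{F}^m,d)=\nu_{\min}(\mc{F}^m,d)$, divide by $m^2$ to get $\nu_j(\mc{F},\delta)=\nu_{\min}(\mc{F},\delta)$.

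\textbf{Main obstacle.} The only delicate point is Step 2, namely excluding a minimum at some index $j'$ that is not a multiple of $m$. This is exactly what Lemma~\ref{le:nublock} provides in the case $r=0$. If I did not want to rely on that lemma, I would redo its exchange argument: moving a partial deletion entirely into the heavier of the two competing block rows or columns never increases the number of remaining dots. The rest is routine bookkeeping with $\phi_m$.
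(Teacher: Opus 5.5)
Your proposal is correct and follows the paper's proof: both apply Lemma~\ref{le:nublock} with $r=0$ to restrict the minimum to the indices $mk$, and both then use $\nu_{mk}(\mc{F}^m,d)=m^2\nu_k(\mc{F},\delta)$. What you add is only what the paper leaves implicit: the identity $\phi_m(\mc{S}_{s,\delta,j})=\mc{S}_{sm,d,mj}$ that justifies this equality, and the two directions of the $j$-Singleton equivalence.
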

\begin{proof}
This result is a direct consequence of Lemma \ref{le:nublock} in case $r=0$. Thus,
\begin{equation*}
    \nu_{\min}(\mc{F}^m,d)=\underset{k\in\{\,0,\dots,\delta-1\,\}}{\min}\nu_{mk}(\mc{F}^m,d)=\underset{k\in\{\,0,\dots,\delta-1\,\}}{\min}m^2\nu_{k}(\mc{F},\delta)=m^2\num{F}{\delta},
\end{equation*}
which concludes the proof.
\end{proof}

The following lemma aims to address the relationship between $\num{F}{\delta}$ and $\nu_{\min}(\mc{F}^m,d)$ when $d-1$ is not a multiple of $m$. Observe that, compared to Lemma \ref{le:jsingm}, we need to add some hypotheses to the pair $(\mc{F},{\delta+1})$. 
\begin{lemma}
\label{le:jsing}
Let $\mc{F}$ be a Ferrers diagram of order $s$ and $\mc{F}^m$ be its $m$-block version. Let $\delta\in[s-1]$  and $d\in[sm]$ be such that $d-1=(\delta-1)m+r$, with $r\in[m-1]$. Let $j\in\{\,0,\dots,\delta-1\,\}$ be such that $(\mc{F},\delta)$ is $j$-Singleton. Then:
\begin{enumerate}
    \item if $(\mc{F},\delta+1)$ is $j$-Singleton, then $(\mc{F}^m,d)$ is $mj$-Singleton.
    
    \item If $(\mc{F},\delta+1)$ is $(j+1)$-Singleton, then $(\mc{F}^m,d)$ is $(mj+r)$-Singleton.
\end{enumerate}
Moreover, in both cases it holds
    \begin{equation*}
    \nu_{\min}(\mc{F}^m,d)=m^2\num{F}{\delta}-mr(\num{F}{\delta}-\num{F}{\delta+1}).
    \end{equation*}
\end{lemma}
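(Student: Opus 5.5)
The plan is to make Lemma~\ref{le:nublock} quantitative. That lemma already says that $\nu_{\min}(\mc{F}^m,d)$ is the minimum of the $2\delta$ numbers $\nu_{km}(\mc{F}^m,d)$ and $\nu_{km+r}(\mc{F}^m,d)$, for $k\in\{0,\dots,\delta-1\}$. I will express each of these numbers exactly in terms of the contraction $\mc{F}$, as a nonnegative combination of one value of $\nu_\bullet(\mc{F},\delta)$ and one value of $\nu_\bullet(\mc{F},\delta+1)$. The hypothesis $\delta\le s-1$ guarantees $\delta+1\in[s]$, so all these quantities are defined.

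\emph{First count.} $\nu_{km}(\mc{F}^m,d)$ counts the dots of $\mc{F}^m$ left after deleting the last $k$ block columns, the first $\delta-1-k$ block rows, and the first $r$ rows of block row $R^B_{\delta-k}$. Deleting only the full block rows and block columns leaves $m^2\nu_k(\mc{F},\delta)$ dots. The $r$ extra rows all lie in block row $\delta-k$, restricted to the first $s-k$ block columns. The number of blocks of $\mc{F}^m$ there is exactly the number of dots of $\mc{F}$ in row $\delta-k$ and columns $1,\dots,s-k$. This equals $\nu_k(\mc{F},\delta)-\nu_k(\mc{F},\delta+1)$, since passing from $\delta$ to $\delta+1$ with the same $k$ deletes precisely that one extra row. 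Each such block loses $mr$ dots, so
\[
\nu_{km}(\mc{F}^m,d)=m(m-r)\,\nu_k(\mc{F},\delta)+mr\,\nu_k(\mc{F},\delta+1).
\]

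\emph{Second count.} Symmetrically, $\nu_{km+r}(\mc{F}^m,d)$ deletes $k$ full block columns, $r$ columns of block column $s-k$, and $\delta-1-k$ block rows. The partial column removes $mr$ dots from each block of $\mc{F}^m$ lying in block column $s-k$ below the first $\delta-1-k$ block rows. The number of these blocks is $\nu_k(\mc{F},\delta)-\nu_{k+1}(\mc{F},\delta+1)$, because $\nu_{k+1}(\mc{F},\delta+1)$ removes $k+1$ columns and $(\delta+1)-(k+1)-1=\delta-1-k$ rows. Here $k+1\le\delta$, so $k+1$ is an admissible index for distance $\delta+1$. Hence
\[
\nu_{km+r}(\mc{F}^m,d)=m(m-r)\,\nu_k(\mc{F},\delta)+mr\,\nu_{k+1}(\mc{F},\delta+1).
\]

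\emph{Conclusion.} The weights $m(m-r)$ and $mr$ are nonnegative, so every one of the $2\delta$ values is at least
\[
L:=m(m-r)\nu_{\min}(\mc{F},\delta)+mr\,\nu_{\min}(\mc{F},\delta+1).
\]
In case (1), take $k=j$ in the first formula: both terms attain their minima because $(\mc{F},\delta)$ and $(\mc{F},\delta+1)$ are both $j$-Singleton. Thus $\nu_{jm}(\mc{F}^m,d)=L$, which equals $\nu_{\min}(\mc{F}^m,d)$ by Lemma~\ref{le:nublock}, so $(\mc{F}^m,d)$ is $mj$-Singleton. In case (2), take $k=j$ in the second formula: $(\mc{F},\delta+1)$ being $(j+1)$-Singleton gives $\nu_{jm+r}(\mc{F}^m,d)=L$, so $(\mc{F}^m,d)$ is $(mj+r)$-Singleton. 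In both cases $\nu_{\min}(\mc{F}^m,d)=L$, and expanding $L$ gives $m^2\nu_{\min}(\mc{F},\delta)-mr\bigl(\nu_{\min}(\mc{F},\delta)-\nu_{\min}(\mc{F},\delta+1)\bigr)$, which is the stated formula.

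The main obstacle is the two counting identities. I must check that the partial row (respectively column) meets only full blocks inside the surviving region, which holds because $\mc{F}^m$ is a union of full blocks. I must also check that the block count equals the stated difference of $\nu$'s even when some columns contribute $0$ through the $\max\{0,\cdot\}$ truncation. I would verify this first by writing $\nu_k(\mc{F},\delta)=|\mc{F}\cap\mc{S}_{s,\delta,k}|$ and noting that $\mc{S}_{s,\delta,k}\setminus\mc{S}_{s,\delta+1,k}$ is exactly row $\delta-k$ restricted to columns $1,\dots,s-k$. Likewise, $\mc{S}_{s,\delta,k}\setminus\mc{S}_{s,\delta+1,k+1}$ is exactly column $s-k$ restricted to rows $\delta-k,\dots,s$.
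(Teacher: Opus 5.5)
Your proposal is correct and matches the paper's proof. The paper also reduces via Lemma~\ref{le:nublock} and derives exactly your two identities, $\nu_{km}(\mc{F}^m,d)=(m^2-mr)\nu_k(\mc{F},\delta)+mr\,\nu_k(\mc{F},\delta+1)$ and $\nu_{km+r}(\mc{F}^m,d)=(m^2-mr)\nu_k(\mc{F},\delta)+mr\,\nu_{k+1}(\mc{F},\delta+1)$, and then compares them using the $j$-Singleton hypotheses. Your single lower bound $L$ just packages the paper's four inequalities and states the final formula for $\nu_{\min}(\mc{F}^m,d)$ explicitly, which the paper leaves implicit.
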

\begin{proof}

Recall that from Lemma \ref{le:nublock} we have

$$
\nu_{\min}(\mc{F}^m,d) =\min\{\,\nu_{km}(\mc{F}^m,d), \nu_{km+r}(\mc{F}^m,d)\,:\, k \in \{\,0,\ldots,\delta-1\,\}\,\}.
$$
Now, note that 
$$
\begin{aligned}
\nu_{km}(\mc{F}^m,d)=&\nu_{k}(\mathcal{F},\delta+1)m^2+(\nu_{k}(\mathcal{F},\delta)-\nu_{k}(\mathcal{F},\delta+1)) (m(m-r)) \\
=& (m^2-mr)\nu_{k}(\mathcal{F},\delta)+mr\nu_{k}(\mathcal{F},\delta+1)
\end{aligned}
$$
and 
$$
\begin{aligned}
\nu_{km+r}(\mc{F}^m,d)=&\nu_{k+1}(\mathcal{F},\delta+1)m^2+(\nu_{k}(\mathcal{F},\delta)-\nu_{k+1}(\mathcal{F},\delta+1)) (m(m-r)) \\
=& (m^2-mr)\nu_{k}(\mathcal{F},\delta)+mr\nu_{k+1}(\mathcal{F},\delta+1).
\end{aligned}
$$
If both $(\mathcal{F},\delta)$ and $(\mathcal{F},\delta+1)$ are $j$-Singleton, then
$$
\begin{aligned}
\nu_{jm}(\mc{F}^m,d)=&(m^2-mr)\nu_{j}(\mathcal{F},\delta)+mr\nu_{j}(\mathcal{F},\delta+1) \\
\le&
(m^2-mr)\nu_{k}(\mathcal{F},\delta)+mr\nu_{k}(\mathcal{F},\delta+1)=\nu_{km}(\mc{F}^m,d)
\end{aligned}
$$
and
$$
\begin{aligned}
\nu_{jm}(\mc{F}^m,d)=&(m^2-mr)\nu_{j}(\mathcal{F},\delta)+mr\nu_{j}(\mathcal{F},\delta+1) \\
\le&
(m^2-mr)\nu_{k}(\mathcal{F},\delta)+mr \nu_{k+1}(\mathcal{F},\delta+1)=\nu_{km+r}(\mc{F}^m,d).
\end{aligned}
$$
Thus, $\nu_{\min}(\mc{F}^m,d)=\nu_{jm}(\mc{F}^m,d)$.

While, if $(\mathcal{F},\delta)$ is $j$-Singleton and $(\mathcal{F},\delta+1)$ is $(j+1)$-Singleton, then, in a similar way, we get
$$
\begin{aligned}
\nu_{jm+r}(\mc{F}^m,d)=&(m^2-mr)\nu_{j}(\mathcal{F},\delta)+mr\nu_{j+1}(\mathcal{F},\delta+1)\\
 \le&
(m^2-mr)\nu_{k}(\mathcal{F},\delta)+mr\nu_{k}(\mathcal{F},\delta+1)=\nu_{km}(\mc{F}^m,d)
\end{aligned}
$$
and 
$$
\begin{aligned}
\nu_{jm+r}(\mc{F}^m,d) &=(m^2-mr)\nu_{j}(\mathcal{F},\delta)+mr\nu_{j+1}(\mathcal{F},\delta+1) \\
&\le
(m^2-mr)\nu_{k}(\mathcal{F},\delta)+mr\nu_{k+1}(\mathcal{F},\delta+1)=\nu_{km+r}(\mc{F}^m,d).
\end{aligned}
$$
So, $\nu_{\min}(\mc{F}^m,d)=\nu_{jm+r}(\mc{F}^m,d)$.
\end{proof}

\begin{remark}
    Let $\mc{F}$ be a Ferrers diagram of order $s$ and $\delta\in[s-1]$, take $j\in\{\,0,\ldots,\delta-1\,\}$ such that $(\mc{F},\delta)$ is $j$-Singleton. Then $(\mc{F},\delta+1)$ is not necessarily $j$-Singleton or $(j+1)$-Singleton. As an example consider the following Ferrers diagram.

Let $\mc{F}=[0,2,2,2,4,6]$ and $\delta=3$.
\begin{center}
\begin{tikzpicture}[scale=0.4]
\draw[help lines, very thick, white, fill=blue!10] (0,1) -- (0,-5) -- (6,-5)--(6,1)--(0,1);

\draw[fill=black] (1.5,0.5) circle (0.1cm);
\draw[fill=black] (2.5,0.5) circle (0.1cm);
\draw[fill=black] (3.5,0.5) circle (0.1cm);
\draw[fill=black] (4.5,0.5) circle (0.1cm);
\draw[fill=black] (5.5,0.5) circle (0.1cm);

\draw[fill=black] (1.5,-0.5) circle (0.1cm);
\draw[fill=black] (2.5,-0.5) circle (0.1cm);
\draw[fill=black] (3.5,-0.5) circle (0.1cm);
\draw[fill=black] (4.5,-0.5) circle (0.1cm);
\draw[fill=black] (5.5,-0.5) circle (0.1cm);

\draw[fill=black] (4.5,-1.5) circle (0.1cm);
\draw[fill=black] (5.5,-1.5) circle (0.1cm);

\draw[fill=black] (4.5,-2.5) circle (0.1cm);
\draw[fill=black] (5.5,-2.5) circle (0.1cm);

\draw[fill=black] (5.5,-3.5) circle (0.1cm);

\draw[fill=black] (5.5,-4.5) circle (0.1cm);
\end{tikzpicture}  
\end{center}
The pair $(\mc{F},3)$ is $2$-Singleton but $(\mc{F},4)$ is neither $2$-Singleton nor $3$-Singleton. 
\end{remark}

\begin{definition}
The $m$-block Ferrers diagram $\mc{T}_{s,m}^{B}$ of order $sm$ is \textit{block triangular} when
\begin{equation*}
Q_{i,j}^{(m)}\cap\mc{T}_{s,m}^B=
\begin{cases}
    Q_{i,j}^{(m)}\quad &\text{ if } i,j\in[s] \text{ and } i\leq j\\
    \emptyset &\text{otherwise.}
\end{cases}
\end{equation*}
That is, the block triangular $\mc{T}_{s,m}^{B}$ is such that $\phi_m^{-1}(\mc{T}_{s,m}^{B})=\mc{T}_s$.
\end{definition}
\begin{example}
\label{ex:tri}
The $2$-block triangular Ferrers diagram of order $8$, $\mc{T}_{4,2}^{B}$.
\begin{center}
\begin{tikzpicture}[scale=0.4]
\draw[help lines, very thick, white, fill=teal!5] (0,1) -- (0,-7) -- (8,-7)--(8,1)--(0,1);

\draw[help lines, dashed] (2,0.95) -- (2,-6.95);
\draw[help lines, dashed] (4,0.95) -- (4,-6.95);
\draw[help lines, dashed] (6,0.95) -- (6,-6.95);
\draw[help lines, dashed] (0.05,-1) -- (7.95,-1);
\draw[help lines, dashed] (0.05,-3) -- (7.95,-3);
\draw[help lines, dashed] (0.05,-5) -- (7.95,-5);

\draw[fill=black] (0.5,0.5) circle (0.1cm);
\draw[fill=black] (1.5,0.5) circle (0.1cm);
\draw[fill=black] (2.5,0.5) circle (0.1cm);
\draw[fill=black] (3.5,0.5) circle (0.1cm);
\draw[fill=black] (4.5,0.5) circle (0.1cm);
\draw[fill=black] (5.5,0.5) circle (0.1cm);
\draw[fill=black] (6.5,0.5) circle (0.1cm);
 \draw[fill=black] (7.5,0.5) circle (0.1cm);

\draw[fill=black] (0.5,-0.5) circle (0.1cm);
\draw[fill=black] (1.5,-0.5) circle (0.1cm);
\draw[fill=black] (2.5,-0.5) circle (0.1cm);
\draw[fill=black] (3.5,-0.5) circle (0.1cm);
\draw[fill=black] (4.5,-0.5) circle (0.1cm);
\draw[fill=black] (5.5,-0.5) circle (0.1cm);
\draw[fill=black] (6.5,-0.5) circle (0.1cm);
\draw[fill=black] (7.5,-0.5) circle (0.1cm);

\draw[fill=black] (2.5,-1.5) circle (0.1cm);
\draw[fill=black] (3.5,-1.5) circle (0.1cm);
\draw[fill=black] (4.5,-1.5) circle (0.1cm);
\draw[fill=black] (5.5,-1.5) circle (0.1cm);
\draw[fill=black] (6.5,-1.5) circle (0.1cm);
\draw[fill=black] (7.5,-1.5) circle (0.1cm);

\draw[fill=black] (2.5,-2.5) circle (0.1cm);
\draw[fill=black] (3.5,-2.5) circle (0.1cm);
\draw[fill=black] (4.5,-2.5) circle (0.1cm);
\draw[fill=black] (5.5,-2.5) circle (0.1cm);
\draw[fill=black] (6.5,-2.5) circle (0.1cm);
\draw[fill=black] (7.5,-2.5) circle (0.1cm);

\draw[fill=black] (4.5,-3.5) circle (0.1cm);
\draw[fill=black] (5.5,-3.5) circle (0.1cm);
\draw[fill=black] (6.5,-3.5) circle (0.1cm);
\draw[fill=black] (7.5,-3.5) circle (0.1cm);

\draw[fill=black] (4.5,-4.5) circle (0.1cm);
\draw[fill=black] (5.5,-4.5) circle (0.1cm);
\draw[fill=black] (6.5,-4.5) circle (0.1cm);
\draw[fill=black] (7.5,-4.5) circle (0.1cm);

\draw[fill=black] (6.5,-5.5) circle (0.1cm);
\draw[fill=black] (7.5,-5.5) circle (0.1cm);

\draw[fill=black] (6.5,-6.5) circle (0.1cm);
\draw[fill=black] (7.5,-6.5) circle (0.1cm);

\end{tikzpicture} 

\end{center}
\end{example}

\begin{definition}
Let $\mc{F}=[[c_1,\ldots,c_s]]_m$ be an $m$-block Ferrers diagram. Then we say that $\mc{F}$  is (\textit{strictly}) $m$\textit{-monotone} if its contraction $\phi_m^{-1}(\mc{F})=[c_1,\ldots,c_s]$ is (strictly) monotone. The Ferrers diagram $\mc{F}=[[c_1,\ldots,c_s]]_m$ is called (\textit{initially}) $m$\textit{-convex} if $\phi_m^{-1}(\mc{F})$ is (initially) convex.
\end{definition}

\begin{remark}
The concept of (strictly) $m$-monotone and (initially) $m$-convex diagram was already introduced in \cite{bib:neri} in the specific case of $m=p^h$ for some prime number $p$.
\end{remark}

\begin{remark}
Let us note that the adjoint diagram of a (strictly) $m$-monotone one is (initially)  $m$-convex, and vice versa.
\end{remark}

\begin{proposition}\label{prop:jsingtriang}
    Let $\mc{T}_{s,m}^{B}$ be a block triangular diagram. Let $d\in [sm]$, and let $\delta$ be such that $d-1=m(\delta-1)+r$, with $0\le r\le m-1$. Then $(\mc{T}_{s,m}^{B},d)$ is $mj$-Singleton and $(mj+r)$-Singleton for any $j\in\{0,\ldots,\delta-1\}$. In particular,
    $$
    \nu_{\min}(\mc{T}_{s,m}^{B},d)=m^2\frac{(s-\delta+1)(s-\delta+2)}{2}-mr(s-\delta+1).
    $$
\end{proposition}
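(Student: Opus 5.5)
The plan is to reduce everything to Lemma~\ref{le:nublock} together with the explicit formulas for $\nu_{km}$ and $\nu_{km+r}$ of a block version that appear in the proof of Lemma~\ref{le:jsing}. Write $\mc{T}_{s,m}^B=\mc{T}_s^m$, where $\mc{T}_s=\phi_m^{-1}(\mc{T}^B_{s,m})$.

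\emph{Case $r=0$.} Lemma~\ref{le:jsingm} applies directly. Since $(\mc{T}_s,\delta)$ is $j$-Singleton for every $j\in\{0,\ldots,\delta-1\}$, the pair $(\mc{T}^B_{s,m},d)$ is $mj$-Singleton for all such $j$. Moreover
\[
\nu_{\min}=m^2\binom{s-\delta+2}{2},
\]
which is the claimed formula with $r=0$. In this case $mj=mj+r$, so there is nothing more to prove.

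\emph{Case $r\ge1$.} I first dispose of the boundary case $\delta=s$. There $d-1=m(s-1)+r$, so $\mc{S}_{sm,d,mj}$ and $\mc{S}_{sm,d,mj+r}$ meet only the last diagonal block. The formula predicts $m^2-mr=m(m-r)$, and this is exactly what remains when $(s-1)m+r$ rows and columns are deleted from a single $m\times m$ block. This can be checked by direct counting.

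For $\delta\le s-1$, I use the identities from the proof of Lemma~\ref{le:jsing}, valid for any $\mc{F}$ and any $k\in\{0,\ldots,\delta-1\}$:
\[
\nu_{km}(\mc{F}^m,d)=(m^2-mr)\nu_k(\mc{F},\delta)+mr\,\nu_k(\mc{F},\delta+1),
\]
\[
\nu_{km+r}(\mc{F}^m,d)=(m^2-mr)\nu_k(\mc{F},\delta)+mr\,\nu_{k+1}(\mc{F},\delta+1).
\]
(I should double-check that these identities hold without any Singleton hypothesis; their derivation was purely a dot count, so they should.)

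For $\mc{F}=\mc{T}_s$, every $\nu_k(\mc{T}_s,\delta)$ equals $\binom{s-\delta+2}{2}$, and every $\nu_k(\mc{T}_s,\delta+1)$ and $\nu_{k+1}(\mc{T}_s,\delta+1)$ equals $\binom{s-\delta+1}{2}$. Note that $k+1\le\delta$, which is an admissible index for distance $\delta+1$. Hence all $2\delta$ quantities $\nu_{km}(\mc{T}^B_{s,m},d)$ and $\nu_{km+r}(\mc{T}^B_{s,m},d)$ coincide. By Lemma~\ref{le:nublock} their common value is $\nu_{\min}$, which gives the Singleton statements.

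Finally, the common value simplifies using
\[
\binom{s-\delta+2}{2}-\binom{s-\delta+1}{2}=s-\delta+1,
\]
so that
\[
m^2\binom{s-\delta+2}{2}-mr(s-\delta+1),
\]
which is the displayed formula.

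The main obstacle is justifying that the two displayed identities hold unconditionally. If needed, I would re-derive them by counting dots of $\mc{F}^m\cap\mc{S}_{sm,d,km}$ block by block: blocks fully surviving are counted by $\nu_k(\mc{F},\delta+1)$, and blocks in the partially cut block row contribute $m(m-r)$ each.
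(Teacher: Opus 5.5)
Your proof is correct and is essentially the paper's argument. For $\delta<s$ the paper invokes Lemma~\ref{le:jsing}, whose proof consists precisely of the two dot-count identities you quote; these hold for every $k$ with no Singleton hypothesis, so your worry is unfounded. For $\delta=s$ the paper also counts directly. Your separate treatment of $r=0$ via Lemma~\ref{le:jsingm} is slightly more careful than the paper, which applies Lemma~\ref{le:jsing} even though that lemma is stated only for $r\ge 1$. One small imprecision: for $\delta=s$ the unique surviving block for the indices $mj$ and $mj+r$ is $Q_{s-j,s-j}$, not always the last diagonal block, but the count $m(m-r)$ is unchanged.
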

\begin{proof}
    We divide the proof in two cases.

    First, assume that $\delta <s$.
    Let $\mc{T}_s=\phi_m^{-1}(\mc{T}_{s,m}^{B})$. For any $\delta\in[s]$ we have that $(\mc{T}_s,\delta)$ is $j$-Singleton, for any $j\in\{0,\ldots,\delta-1\}$. Therefore, letting $\delta$ be such that $d-1=m(\delta-1)+r$, with $0\le r\le m-1$, we have that for any $j\in\{0,\ldots,\delta-1\}$ the following two properties are satisfied:
    \begin{itemize}
        \item[(i)] $(\mc{T}_s,\delta)$ and $(\mc{T}_s,\delta+1)$ are both $j$-Singleton,
        \item[(ii)] $(\mc{T}_s,\delta)$ is $j$-Singleton and $(\mc{T}_s,\delta+1)$ is $(j+1)$-Singleton.
    \end{itemize}
    Thus, from Lemma \ref{le:jsing}, we have that $(\mc{T}_{s,m}^{B},d)$ is $mj$-Singleton and $(mj+r)$-Singleton for any $j\in\{0,\ldots,\delta-1\}$. Now, from Lemma \ref{le:jsingm}, we have 
    \begin{align*}\nu_{\min}(\mc{T}_{s,m}^{B},d)=\nu_{mj}(\mc{T}_{s,m}^{B},d)=&m^2\nu_{j}(\mc{T}_s,\delta)-mr(\nu_{j}(\mc{T}_s,\delta)-\nu_{j}(\mc{T}_s,\delta+1))\\
    =&m^2\nu_{\min}(\mc{T}_s,\delta)-mr(\nu_{\min}(\mc{T}_s,\delta)-\nu_{\min}(\mc{T}_s,\delta+1))\\
    =&m^2\frac{(s-\delta+1)(s-\delta+2)}{2}-mr(s-\delta+1).
    \end{align*}

{
    It remains to consider the case $\delta=s$. We have
$$d-1=m(s-1)+r, \qquad 0\le r\le m-1.$$
Fix $j\in\{0,\ldots,s-1\}$. After deleting the last $mj$ columns and the first $d-mj-1=m(s-j-1)+r$
rows, the only block of $\mc T^B_{s,m}$ that can contribute is the diagonal block
$Q^{(m)}_{s-j,s-j}$. In this block, exactly $r$ rows are deleted, and therefore
$$\nu_{mj}(\mc T^B_{s,m},d)=m(m-r).$$
Similarly, after deleting the last $mj+r$ columns and the first $d-(mj+r)-1=m(s-j-1)$ rows, the only block of $\mc T^B_{s,m}$ that can contribute is again
$Q^{(m)}_{s-j,s-j}$. This time, exactly $r$ columns of this block are deleted, and hence
$$\nu_{mj+r}(\mc T^B_{s,m},d)=m(m-r).$$
Thus, $(\mc T^B_{s,m},d)$ is both $mj$-Singleton and $(mj+r)$-Singleton for every
$j\in\{0,\ldots,s-1\}$. Moreover,
$$\nu_{\min}(\mc T^B_{s,m},d)=m(m-r)
= m^2\frac{(s-s+1)(s-s+2)}{2}-mr(s-s+1),$$
which is the desired formula in the case $\delta=s$.}
\end{proof}

The previous result can be extended to the more general case of $m$-monotone Ferrers diagrams.

\begin{proposition}
    Let $\mc{F}$ be an $m$-block diagram of order $sm$. Let $d\in [sm]$, and let $\delta$ be such that $d-1=m(\delta-1)+r$, with $0\le r\le m-1$. If $\mc{F}$ is $m$-monotone, then $(\mc{F},d)$ is $(m(\delta-1)+r)$-Singleton.
\end{proposition}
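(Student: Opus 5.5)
The plan is to show that the sequence $\nu_0(\mc{F},d),\nu_1(\mc{F},d),\ldots,\nu_{d-1}(\mc{F},d)$ is non-increasing. Then the minimum is attained at the last index $j=d-1=m(\delta-1)+r$, which is exactly the claim. Throughout, write $n=sm$ and $\mc{F}=[c_1,\ldots,c_n]$, with $c_l=h_b m$ for every column $l$ in block column $b$, where $\phi_m^{-1}(\mc{F})=[h_1,\ldots,h_s]$ is monotone.

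\textbf{Step 1: a telescoping identity.} Passing from $\nu_j$ to $\nu_{j+1}$ means keeping row $i:=d-1-j$ and deleting column $n-j$. This gives
$$
\nu_{j+1}(\mc{F},d)-\nu_j(\mc{F},d)=\bigl|\{\,l\le n-j-1 : c_l\ge i\,\}\bigr|-\max\{0,\,c_{n-j}-i\}.
$$
So it suffices to prove, for every $j\in\{0,\ldots,d-2\}$, that the number $a_i$ of dots of row $i$ in columns $1,\ldots,n-j-1$ is at most $\max\{0,c_{n-j}-i\}$. If $c_{n-j}<i$, then $a_i=0$, because the $c_l$ are non-decreasing. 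Hence we may assume $c_{n-j}\ge i$.

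\textbf{Step 2: the non-full case.} Suppose column $n-j$ lies in block column $b$ with $h_b<s$.
\begin{itemize}
\item By monotonicity of the contraction, $h_{b'}<h_b$ for all $b'<b$. Iterating, the heights strictly decrease going left, so $h_{b-t}\le h_b-t$.
\item The columns $l\le n-j-1$ with $c_l\ge i$ therefore consist of the columns of block $b$ to the left of $n-j$ (at most $m-1$ of them) together with whole block columns $b-1,\ldots,b-t$, where $t$ is the largest index with $(h_b-t)m\ge i$.
\item This gives $a_i\le (m-1)+tm$. On the other hand, $c_{n-j}-i\ge h_bm-i\ge tm$.
\end{itemize}
The leftover $m-1$ must still be absorbed. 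I would do this by using that $i=d-1-j$ sits at a specific offset inside its block row, and by splitting $j$ as $j=km+u$ with $0\le u<m$, in the spirit of Lemma~\ref{le:nublock}. If the per-step inequality genuinely fails for some intermediate $j$ inside a block, I would instead use Lemma~\ref{le:nublock}. That lemma reduces the claim to the indices $j\in\{km,km+r\}$, so it is enough to compare consecutive values of this restricted list. Each such comparison is a sum of $m$ or $r$ single steps, and the block bounds above should hold in aggregate even where individual steps do not.

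\textbf{Step 3: full columns (main obstacle).} The hard case is when column $n-j$ is full, i.e.\ $h_b=s$. Monotonicity then allows several full block columns at the right end. Here $c_{n-j}-i=n-i$, and I would bound $a_i$ by counting the non-full block columns (strictly increasing, handled as in Step 2) plus the full ones to the left of $n-j$. For the full ones I would use the constraint $i=d-1-j$ together with the fact that the number of full columns to the right of column $n-j$ is at most $j$. I expect this bookkeeping at the boundary between the full region and the strictly increasing region, combined with the off-block shift $r$, to be the delicate point. My first attempt would be to reduce to the contraction: treat $\phi_m^{-1}(\mc{F})$ with distances $\delta$ and $\delta+1$, and try to use the classical fact that a monotone diagram is $(\delta-1)$-Singleton. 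One would then aim to feed this into Lemma~\ref{le:jsing}(2) with $j=\delta-1$, which yields $(m(\delta-1)+r)$-Singleton directly.
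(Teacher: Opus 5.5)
\textbf{Gap: Steps 1--3 try to prove something false.} You aim to show that $\nu_0(\mc{F},d),\ldots,\nu_{d-1}(\mc{F},d)$ is non-increasing, but for $m$-monotone diagrams this fails in general.

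The per-step inequality of Step 1 already fails for strictly $m$-monotone diagrams. For $\mc{T}^B_{4,2}=[2,2,4,4,6,6,8,8]$ and $d=5$ one gets $(\nu_0,\ldots,\nu_4)=(12,13,12,13,12)$. The increase $\nu_2\to\nu_3$ occurs at the non-full column $6$, and it is exactly the uncancelled $m-1$ of your Step 2.

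Your fallback of comparing only consecutive indices of $\{km,km+r\}$ also fails once full block columns occur. The full diagram $[sm]^2$ has monotone contraction and $\nu_j(\mc{F},d)=(sm-j)(sm-d+1+j)$. This is concave in $j$, with the endpoint values $\nu_0=\nu_{d-1}$ strictly below all interior ones. For $m=2$, $s=3$, $d=4$ it gives $(18,20,20,18)$ on $\{0,1,2,3\}=\{km,km+r\}$. So no bookkeeping in Step 3 can work: the true statement is only that the minimum is attained at the endpoint $j=d-1$, which needs a global comparison.

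\textbf{What works: the contraction argument.} The reduction in your last two sentences is the correct proof and is the paper's. It should be the whole argument, not a patch for full columns. Since $\mc{F}'=\phi_m^{-1}(\mc{F})$ is monotone, $(\mc{F}',\delta)$ is $(\delta-1)$-Singleton, and for $\delta\le s-1$ also $(\mc{F}',\delta+1)$ is $\delta$-Singleton. Lemma~\ref{le:jsingm} then gives $r=0$, and Lemma~\ref{le:jsing}(2) with $j=\delta-1$ gives $r\ge 1$.

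\textbf{Missing case: $\delta=s$ with $r\in[m-1]$.} Lemma~\ref{le:jsing} needs $\delta\in[s-1]$, because $(\mc{F}',\delta+1)$ is undefined when $\delta=s$; the paper treats this case separately. One way to close it:
\begin{itemize}
\item By Lemma~\ref{le:nublock}, it suffices to compare $\nu_{d-1}(\mc{F},d)$ with $\nu_{km}(\mc{F},d)$ and $\nu_{km+r}(\mc{F},d)$.
\item $\nu_{d-1}(\mc{F},d)=m(m-r)h_1$: no rows are deleted and only the first $m-r$ columns survive.
\item In both other deletions, the blocks of $\mc{F}$ meeting the surviving region are exactly the $\nu_k(\mc{F}',s)$ blocks in block rows $\ge s-k$ and block columns $\le s-k$. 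Each keeps at least $m(m-r)$ dots.
\item Hence both values are at least $m(m-r)\nu_k(\mc{F}',s)\ge m(m-r)\nu_{s-1}(\mc{F}',s)=m(m-r)h_1$, using monotonicity of $\mc{F}'$.
\end{itemize}
This holds for every monotone contraction, e.g.\ $[2,3,3]$ with $h_1=2$. So you need not assume the first block column contains only $Q_{1,1}$.
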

\begin{proof}
    First, assume $\delta <s$. Let $\mc{F}'=\phi_m^{-1}(\mc{F})$ be the contraction of $\mc{F}$. Then $\mc{F}'$ is monotone. It is easy to note (see also \cite[Remark 2.26]{bib:neri}) that $\nu_{\min}(\mc{F}',\delta)=\nu_{\delta-1}(\mc{F}',\delta)$, for any $\delta\in[s]$. Therefore, $(\mc{F}',\delta)$ is $(\delta-1)$-Singleton and $(\mc{F}',\delta+1)$ is $\delta$-Singleton, implying, by Lemma \ref{le:jsingm} (if $r=0$) and by Lemma \ref{le:jsing} (if $r\neq 0$), that $(\mc{F},d)$ is $(m(\delta-1)+r)$-Singleton.

    { It remains to consider the case $\delta=s$. In this case
\[
d-1=m(s-1)+r,
\qquad 0\le r\le m-1.
\]
We claim that $(\mc F,d)$ is $(d-1)$-Singleton. Indeed, let
$\mc F'=\phi_m^{-1}(\mc F)$ be the contraction of $\mc F$. Since $\mc F$ is
$m$-monotone, the diagram $\mc F'$ is monotone. Hence,
$$\nu_{\min}(\mc F',s)=\nu_{s-1}(\mc F',s).$$
Equivalently, for the pair
$(\mc F',s)$, the minimum is attained by deleting the last $s-1$ columns.

Passing to the $m$-block expansion, and using
$d-1=m(s-1)+r$, the deletion corresponding to the index
$d-1=m(s-1)+r$ deletes the last $d-1$ columns and no rows. This is
obtained from the block deletion corresponding to $s-1$ by deleting,
inside the remaining first block column, $r$ additional columns. Since
$\mc F'$ is monotone, the first block column of $F$ contains at most one
nonzero block, that is $Q^{(m)}_{1,1}$. Therefore, this deletion leaves
the smallest possible number of dots among all deletions of $d-1$
rows and columns. Hence
$$\nu_{\min}(\mc F,d)=\nu_{d-1}(\mc F,d),$$
that is, $(\mc F,d)$ is $(d-1)$-Singleton. Since
$d-1=m(s-1)+r=m(\delta-1)+r$, the desired conclusion follows.}
\end{proof}

\subsection{Relations between the MDS-constructibility of $\mc{F}$ and the \\ MSRD-constructibility of $\mc{F}^m$}

{
For a Ferrers diagram $\mc{F}$ of order $s$ and an integer $\delta \in [s]$, we set
$$I(\mc{F},\delta):=\{i\in[s]\,:\, |D_i\cap \mc{F}|\ge \delta\}.$$
Equivalently, $I(\mc{F},\delta)$ is the set of diagonals contributing to the
MDS-constructibility sum
$$\sum_{i=1}^s \max\{|D_i\cap \mc{F}|-\delta+1,0\}.$$
Moreover, for an $m$-block Ferrers diagram $\mc{F}$ of order $sm$ and an integer $d\in[sm]$, we set
$$I_B(\mc{F},d):=\{i\in[s] \, : \, m|D_i^B\cap \mathbf{B}(\mc{F})|-d+1>0\}.$$
Equivalently,
$$I_B(\mc{F},d)= \{i\in[s] : m|D_i^B\cap \mathbf{B}(\mc{F})|\ge d\}.$$
}

When $d$ and $\delta$ are such that $d-1=m(\delta-1)$, we can prove a necessary and sufficient condition between MDS-constructibility and MSRD-constructibility where one diagram is the $m$-block version of the other.
\begin{theorem}
\label{th:conndiv}
    Let $\mathcal{F}$ be a Ferrers diagram of order $s$ and $\mc{F}^m$ be its $m$-block version, let $\delta\in[s],d\in[sm]$ be such that $d-1=m(\delta-1)$. Then, $(\mathcal{F},\delta)$ is MDS-constructible if and only if $(\mc{F}^m,d)$ is MSRD-constructible.
\end{theorem}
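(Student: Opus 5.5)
The plan is to compare both sides of the two defining equalities separately and show that each side of the MSRD equality equals $m^2$ times the corresponding side of the MDS equality. Then the two equalities are equivalent after dividing by $m^2$.

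For the left-hand sides, Lemma~\ref{le:jsingm} already gives
\[
\nu_{\min}(\mc{F}^m,d)=m^2\,\nu_{\min}(\mc{F},\delta),
\]
since $d-1=m(\delta-1)$. So only the diagonal sums need work.

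The key combinatorial observation concerns block diagonals. By the definition of $\phi_m$, the block $Q_{l,s-i+l}^{(m)}$ lies in $\mathbf{B}(\mc{F}^m)$ if and only if $(l,s-i+l)\in\mc{F}$. Hence
\[
|D_i^B\cap\mathbf{B}(\mc{F}^m)|=|D_i\cap\mc{F}| \quad\text{for every } i\in[s].
\]

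Now compute the MSRD sum using $d-1=m(\delta-1)$:
\[
m\sum_{i=1}^s\max\{m|D_i\cap\mc{F}|-m(\delta-1),0\}
= m^2\sum_{i=1}^s\max\{|D_i\cap\mc{F}|-\delta+1,0\}.
\]
This is $m^2$ times the MDS-constructibility sum of $(\mc{F},\delta)$. Along the way this also shows $I_B(\mc{F}^m,d)=I(\mc{F},\delta)$: since the quantities are integers, $m|D_i\cap\mc{F}|\ge m(\delta-1)+1$ holds exactly when $|D_i\cap\mc{F}|\ge\delta$.

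Combining the two computations, the equality
\[
\nu_{\min}(\mc{F}^m,d)=m\sum_{i=1}^s\max\{m|D^B_i\cap\mathbf{B}(\mc{F}^m)|-d+1,0\}
\]
holds if and only if $m^2\nu_{\min}(\mc{F},\delta)=m^2\sum_{i=1}^s\max\{|D_i\cap\mc{F}|-\delta+1,0\}$. Cancelling $m^2>0$ gives exactly the MDS-constructibility condition for $(\mc{F},\delta)$, which proves both directions at once.

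No step is a genuine obstacle, because Lemma~\ref{le:jsingm} carries the real content. The only points needing care are the indexing check that block diagonals of $\mc{F}^m$ correspond bijectively to diagonals of $\mc{F}$ (both use the index $s-i+l$), and checking that the $\max\{\cdot,0\}$ terms scale correctly when $r=0$.
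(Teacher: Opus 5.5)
Your proof is correct and follows essentially the same route as the paper: Lemma~\ref{le:jsingm} gives $\nu_{\min}(\mc{F}^m,d)=m^2\nu_{\min}(\mc{F},\delta)$, and the identity $|D_i^B\cap\mathbf{B}(\mc{F}^m)|=|D_i\cap\mc{F}|$ together with $d-1=m(\delta-1)$ shows that the diagonal sum also scales by $m^2$. The only difference is cosmetic: the paper writes out the two implications separately, summing over $I=I_B(\mc{F}^m,d)=I(\mc{F},\delta)$ and treating the case $\nu_{\min}=0$ apart, whereas you get both directions at once from the termwise identity $\max\{mx,0\}=m\max\{x,0\}$ and cancelling $m^2$.
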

\begin{proof}
As seen in Lemma \ref{le:jsingm}, with these hypotheses, for $j\in\{\,0,\dots,\delta-1\,\}$, $(\mc{F},\delta)$ is $j$-Singleton if and only if $(\mc{F}^m,d)$ is $mj$-Singleton and, moreover,
\begin{equation*}
\nu_{\min}(\mc{F}^m,d)=m^2\num{F}{\delta}.
\end{equation*}
Take $j\in\{0,\ldots,\delta-1\}$ such that $(\mc{F},\delta)$ is $j$-Singleton.
Clearly, $\nu_{\min}(\mc{F},\delta)=0$ if and only if $\nu_{\min}(\mc{F}^m,d)=0$, and in this case
there is nothing to prove. Suppose then that $\nu_{\min}(\mc{F},\delta)>0$, and set
$$I:=I(\mc{F},\delta).$$
For every $i\in[s]$, we have
$$D_i^B\cap \mathbf{B}(\mc{F}^m)=\phi_m(D_i\cap \mc{F}),$$
and therefore
$$|D_i^B\cap \mathbf{B}(\mc{F}^m)|=|D_i\cap \mc{F}|.$$
Moreover, since $d-1=m(\delta-1)$, we have
$$m|D_i^B\cap \mathbf{B}(\mc{F}^m)|-d+1=m(|D_i\cap F|-\delta+1).$$
Hence
$$I_B(\mc{F}^m,d)=I(\mc{F},\delta)=I.$$

Suppose that $(\mc{F},\delta)$ is MDS-constructible. Then
\begin{align*}
\nu_{\min}(\mc{F}^m,d)
&=m^2\nu_{\min}(\mc{F},\delta) \\
&=m^2\sum_{i=1}^s \max\{|D_i\cap \mc{F}|-\delta+1,0\} \\
&=m^2\sum_{i\in I} (|D_i\cap \mc{F}|-\delta+1) \\
&=m\sum_{i\in I} (m|D_i\cap \mc{F}|-m(\delta-1)) \\
&=m\sum_{i\in I} (m|D_i^B\cap \mathbf{B}(\mc{F}^m)|-(d-1)) \\
&=m\sum_{i=1}^s \max\{m|D_i^B\cap \mathbf{B}(\mc{F}^m)|-d+1,0\}.
\end{align*}
Thus, $(\mc{F}^m,d)$ is MSRD-constructible.

Conversely, suppose that $(\mc{F}^m,d)$ is MSRD-constructible. Then
\begin{align*}
\nu_{\min}(\mc{F},\delta)
&=\frac{1}{m^2}\nu_{\min}(\mc{F}^m,d) \\
&=\frac{1}{m}\sum_{i=1}^s
\max\{m|D_i^B\cap \mathbf{B}(\mc{F}^m)|-d+1,0\} \\
&=\frac{1}{m}\sum_{i\in I}
(m|D_i^B\cap \mathbf{B}(\mc{F}^m)|-(d-1)) \\
&=\sum_{i\in I}(|D_i\cap \mc{F}|-\delta+1) \\
&=\sum_{i=1}^s \max\{|D_i\cap \mc{F}|-\delta+1,0\}.
\end{align*}
Hence, $(\mc{F},\delta)$ is MDS-constructible.
\end{proof}

In the following, our aim is to fill the gap in the Theorem \ref{th:connection} by considering the case where $d-1=m(\delta-1)+r$, with $r\in[m-1]$. This more general case will require us to introduce some additional hypotheses on the contraction of block diagrams in order to prove MSRD-constructibility.

In order to properly set up the environment, we need to formally define the intuitive idea of a set of points to be contained in a triangular diagram of a certain size. 

\begin{definition}
Let $\mc{F}$ be a Ferrers diagram of order $s$. We define the \textit{minimum order} of $\mc{F}$ as the integer $o(\mc{F})=\max\{\,|R_1\cap\mc{F}|,|C_s\cap\mc{F}|\,\}$.
\end{definition}

\begin{remark}
If $\mc{F}$ is a Ferrers diagram of order $s$, then one can define a Ferrers diagram of order $o(\mc{F})$ with the same structure of $\mc{F}$ removing a suitable number of empty rows and empty columns.
\end{remark}
 
\begin{definition}
Let $\mc{F}$ be a Ferrers diagram of order $s$ and let $o(\mc{F})$ be its minimum order. The \textit{standard version} of $\mc{F}$ is the Ferrers diagram contained in $[o(\mc{F})]^2$ and obtained from $\mc{F}$ in the following way
\begin{equation*}
\mc{F}_{\mathrm{st}}=\{\,(i,j-(s-o(\mc{F})))\,:\,(i,j)\in\mc{F}\,\}.
\end{equation*}
\end{definition}
\begin{remark}
In particular, $o(\mc{F})=\min\{\,l\in\mathbb{N}\,:\,\exists c \in\mathbb{N} \text{ s.t. }\mc{F}\subseteq[l]\times\{\,c,\dots,c+l-1\,\}\,\}$. Thus, there exists some $c\in\mathbb{N}$ such that $\mc{F}\subseteq[o(\mc{F)}]\times\{\,c,\dots,c+o(\mc{F})-1\,\}$ and $\mc{F}_{\mathrm{st}}$ is the diagram that has the same structure as $\mc{F}$ but is contained in $[o(\mc{F})]^2$. 
\end{remark}
At this point, we can give the notion of being contained in a triangular diagram of the right order.
\begin{definition}
Let $\mc{F}$ be a Ferrers diagram of order $s$. We say that $\mc{F}$ is  \textit{triangular covered} if {$\mc F=\emptyset$ or } $\mc{F}_{\mathrm{st}}\subseteq\mc{T}_{o(\mc{F})}$.
\end{definition}
%Since the previous definitions are not very intuitive, we explain them through some examples. 
\begin{example}
Consider the following Ferrers diagrams of order $5$.

\begin{center}
$\mc{F}_1=[1,2,3,3,5]$,\quad $\mc{F}_2=[0,1,2,2,4]$,\quad $\mc{F}_3=[0,2,3,3,3]$,\quad $\mc{F}_4=[1,2,5,5,5]$

\begin{tikzpicture}[scale=0.5]
\draw[help lines, very thick, white, fill=blue!10] (0,1) -- (0,-4) -- (5,-4)--(5,1)--(0,1);

\draw[fill=black] (0.5,0.5) circle (0.1cm);
\draw[fill=black] (1.5,0.5) circle (0.1cm);
\draw[fill=black] (2.5,0.5) circle (0.1cm);
\draw[fill=black] (3.5,0.5) circle (0.1cm);
\draw[fill=black] (4.5,0.5) circle (0.1cm);

\draw[fill=black] (1.5,-0.5) circle (0.1cm);
\draw[fill=black] (2.5,-0.5) circle (0.1cm);
\draw[fill=black] (3.5,-0.5) circle (0.1cm);
\draw[fill=black] (4.5,-0.5) circle (0.1cm);

\draw[fill=black] (2.5,-1.5) circle (0.1cm);
\draw[fill=black] (3.5,-1.5) circle (0.1cm);
\draw[fill=black] (4.5,-1.5) circle (0.1cm);

\draw[fill=black] (4.5,-2.5) circle (0.1cm);
\draw[fill=black] (4.5,-3.5) circle (0.1cm);
\end{tikzpicture} \quad
\begin{tikzpicture}[scale=0.5]
\draw[help lines, very thick, white, fill=blue!10] (0,1) -- (0,-4) -- (5,-4)--(5,1)--(0,1);

\draw[fill=black] (1.5,0.5) circle (0.1cm);
\draw[fill=black] (2.5,0.5) circle (0.1cm);
\draw[fill=black] (3.5,0.5) circle (0.1cm);
\draw[fill=black] (4.5,0.5) circle (0.1cm);

\draw[fill=black] (2.5,-0.5) circle (0.1cm);
\draw[fill=black] (3.5,-0.5) circle (0.1cm);
\draw[fill=black] (4.5,-0.5) circle (0.1cm);

\draw[fill=black] (4.5,-1.5) circle (0.1cm);

\draw[fill=black] (4.5,-2.5) circle (0.1cm);
\end{tikzpicture} \quad
\begin{tikzpicture}[scale=0.5]
\draw[help lines, very thick, white, fill=blue!10] (0,1) -- (0,-4) -- (5,-4)--(5,1)--(0,1);

\draw[fill=black] (1.5,0.5) circle (0.1cm);
\draw[fill=black] (2.5,0.5) circle (0.1cm);
\draw[fill=black] (3.5,0.5) circle (0.1cm);
\draw[fill=black] (4.5,0.5) circle (0.1cm);

\draw[fill=black] (1.5,-0.5) circle (0.1cm);
\draw[fill=black] (2.5,-0.5) circle (0.1cm);
\draw[fill=black] (3.5,-0.5) circle (0.1cm);
\draw[fill=black] (4.5,-0.5) circle (0.1cm);

\draw[fill=black] (2.5,-1.5) circle (0.1cm);
\draw[fill=black] (3.5,-1.5) circle (0.1cm);
\draw[fill=black] (4.5,-1.5) circle (0.1cm);

\end{tikzpicture} \quad
\begin{tikzpicture}[scale=0.5]
\draw[help lines, very thick, white, fill=blue!10] (0,1) -- (0,-4) -- (5,-4)--(5,1)--(0,1);

\draw[fill=black] (0.5,0.5) circle (0.1cm);
\draw[fill=black] (1.5,0.5) circle (0.1cm);
\draw[fill=black] (2.5,0.5) circle (0.1cm);
\draw[fill=black] (3.5,0.5) circle (0.1cm);
\draw[fill=black] (4.5,0.5) circle (0.1cm);

\draw[fill=black] (1.5,-0.5) circle (0.1cm);
\draw[fill=black] (2.5,-0.5) circle (0.1cm);
\draw[fill=black] (3.5,-0.5) circle (0.1cm);
\draw[fill=black] (4.5,-0.5) circle (0.1cm);

\draw[fill=black] (2.5,-1.5) circle (0.1cm);
\draw[fill=black] (3.5,-1.5) circle (0.1cm);
\draw[fill=black] (4.5,-1.5) circle (0.1cm);

\draw[fill=black] (2.5,-2.5) circle (0.1cm);
\draw[fill=black] (3.5,-2.5) circle (0.1cm);
\draw[fill=black] (4.5,-2.5) circle (0.1cm);

\draw[fill=black] (2.5,-3.5) circle (0.1cm);
\draw[fill=black] (3.5,-3.5) circle (0.1cm);
\draw[fill=black] (4.5,-3.5) circle (0.1cm);
\end{tikzpicture} 
\end{center}
$\mc{F}_1$ and $\mc{F}_4$ are Ferrers diagrams of order $5$ with minimum order $o(\mc{F}_1)=o(\mc{F}_4)=5$. Hence, they coincide with their respective standard versions, so $\mc{F}_1={\mc{F}_1}_{\mathrm{st}}$ and $\mc{F}_4={\mc{F}_4}_{\mathrm{st}}$.

\begin{center}
$\mc{F}_1=[1,2,3,3,5]$,\quad $\mc{F}_4=[1,2,5,5,5]$

\begin{tikzpicture}[scale=0.5]
\draw[help lines, very thick, white, fill=blue!10] (0,1) -- (0,-4) -- (5,-4)--(5,1)--(0,1);
\draw[help lines, thick, red](0.25,0.75)--(4.75,0.75)--(4.75,-3.75)--(4.25,-3.75)--(0.25,0.25)--(0.25,0.75);
\draw[fill=black] (0.5,0.5) circle (0.1cm);
\draw[fill=black] (1.5,0.5) circle (0.1cm);
\draw[fill=black] (2.5,0.5) circle (0.1cm);
\draw[fill=black] (3.5,0.5) circle (0.1cm);
\draw[fill=black] (4.5,0.5) circle (0.1cm);

\draw[fill=black] (1.5,-0.5) circle (0.1cm);
\draw[fill=black] (2.5,-0.5) circle (0.1cm);
\draw[fill=black] (3.5,-0.5) circle (0.1cm);
\draw[fill=black] (4.5,-0.5) circle (0.1cm);

\draw[fill=black] (2.5,-1.5) circle (0.1cm);
\draw[fill=black] (3.5,-1.5) circle (0.1cm);
\draw[fill=black] (4.5,-1.5) circle (0.1cm);

\draw[fill=black] (4.5,-2.5) circle (0.1cm);
\draw[fill=black] (4.5,-3.5) circle (0.1cm);
\end{tikzpicture} \quad
\begin{tikzpicture}[scale=0.5]
\draw[help lines, very thick, white, fill=blue!10] (0,1) -- (0,-4) -- (5,-4)--(5,1)--(0,1);
\draw[help lines, thick, red](0.25,0.75)--(4.75,0.75)--(4.75,-3.75)--(4.25,-3.75)--(0.25,0.25)--(0.25,0.75);
\draw[fill=black] (0.5,0.5) circle (0.1cm);
\draw[fill=black] (1.5,0.5) circle (0.1cm);
\draw[fill=black] (2.5,0.5) circle (0.1cm);
\draw[fill=black] (3.5,0.5) circle (0.1cm);
\draw[fill=black] (4.5,0.5) circle (0.1cm);

\draw[fill=black] (1.5,-0.5) circle (0.1cm);
\draw[fill=black] (2.5,-0.5) circle (0.1cm);
\draw[fill=black] (3.5,-0.5) circle (0.1cm);
\draw[fill=black] (4.5,-0.5) circle (0.1cm);

\draw[fill=black] (2.5,-1.5) circle (0.1cm);
\draw[fill=black] (3.5,-1.5) circle (0.1cm);
\draw[fill=black] (4.5,-1.5) circle (0.1cm);

\draw[red,fill=red] (2.5,-2.5) circle (0.1cm);
\draw[fill=black] (3.5,-2.5) circle (0.1cm);
\draw[fill=black] (4.5,-2.5) circle (0.1cm);

\draw[red,fill=red] (2.5,-3.5) circle (0.1cm);
\draw[red,fill=red] (3.5,-3.5) circle (0.1cm);
\draw[fill=black] (4.5,-3.5) circle (0.1cm);
\end{tikzpicture}
\end{center}
Clearly, $\mc{F}_1$ is triangular covered, whereas $\mc{F}_4$ is not.

$\mc{F}_2$ and $\mc{F}_3$ are Ferrers diagrams of order $5$ with minimum order $o(\mc{F}_2)=o(\mc{F}_3)=4$. Their standard versions are

\begin{center}
${\mc{F}_2}_{\mathrm{st}}=[1,2,2,4]$,\quad ${\mc{F}_3}_{\mathrm{st}}=[2,3,3,3]$,

\begin{tikzpicture}[scale=0.5]
\draw[help lines, very thick, white, fill=blue!10] (1,1) -- (1,-3) -- (5,-3)--(5,1)--(1,1);
\draw[help lines, thick, red](1.25,0.75)--(4.75,0.75)--(4.75,-2.75)--(4.25,-2.75)--(1.25,0.25)--(1.25,0.75);

\draw[fill=black] (1.5,0.5) circle (0.1cm);
\draw[fill=black] (2.5,0.5) circle (0.1cm);
\draw[fill=black] (3.5,0.5) circle (0.1cm);
\draw[fill=black] (4.5,0.5) circle (0.1cm);

\draw[fill=black] (2.5,-0.5) circle (0.1cm);
\draw[fill=black] (3.5,-0.5) circle (0.1cm);
\draw[fill=black] (4.5,-0.5) circle (0.1cm);

\draw[fill=black] (4.5,-1.5) circle (0.1cm);

\draw[fill=black] (4.5,-2.5) circle (0.1cm);
\end{tikzpicture}\quad
\begin{tikzpicture}[scale=0.5]
\draw[help lines, very thick, white, fill=blue!10] (1,1) -- (1,-3) -- (5,-3)--(5,1)--(1,1);
\draw[help lines, thick, red](1.25,0.75)--(4.75,0.75)--(4.75,-2.75)--(4.25,-2.75)--(1.25,0.25)--(1.25,0.75);

\draw[fill=black] (1.5,0.5) circle (0.1cm);
\draw[fill=black] (2.5,0.5) circle (0.1cm);
\draw[fill=black] (3.5,0.5) circle (0.1cm);
\draw[fill=black] (4.5,0.5) circle (0.1cm);

\draw[red,fill=red] (1.5,-0.5) circle (0.1cm);
\draw[fill=black] (2.5,-0.5) circle (0.1cm);
\draw[fill=black] (3.5,-0.5) circle (0.1cm);
\draw[fill=black] (4.5,-0.5) circle (0.1cm);

\draw[red,fill=red] (2.5,-1.5) circle (0.1cm);
\draw[fill=black] (3.5,-1.5) circle (0.1cm);
\draw[fill=black] (4.5,-1.5) circle (0.1cm);

\end{tikzpicture} 
\end{center}
thus, $\mc{F}_2$ is triangular covered, whereas $\mc{F}_3$ is not.
\end{example}

We recall the following result from \cite{bib:neri}.

\begin{lemma}[{\cite[Lemma 4.20]{bib:neri}}]\label{lem:puntidots}
Let $\mc{F}$ be a Ferrers diagram of order $n$ and let $d\in\{\,2,\ldots,n\,\}$, $j\in\{\,0,\ldots,d-1\,\}$. Assume that $(\mc{F},d)$ is MDS-constructible and $j$-Singleton. Then the following hold
\begin{enumerate}
    \item  One has $\mc{F}\cap\mc{S}_{n,d,j}=\mc{F}\cap\mc{T}_{n,d,j}$.
    \item  One has 
%$\{\,i\in\{\,d,\ldots,n\,\}\,:\,|D_i\cap \mc{F}|\geq{d}\,\}
$I(\mc{F},d)
=\{\,i\in\{\,d,\ldots,n\,\}\,:\,D_i\cap \mc{F}\cap\mc{S}_{n,d,j}\neq\emptyset\,\}$.
    \item  If $i\in\{\,d,\ldots,n\,\}$ and $D_i\cap \mc{F}\cap\mc{S}_{n,d,j}\neq\emptyset$, then $D_i\cap\mc{F}\supseteq D_i\cap\mc{L}_{n,d,j}$.
\end{enumerate}
\end{lemma}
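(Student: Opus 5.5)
The plan is to compare $\nu_j(\mc{F},d)=|\mc{F}\cap\mc{S}_{n,d,j}|$ with the diagonal sum $\sum_{i}\max\{|D_i\cap\mc{F}|-d+1,0\}$ through a chain of inequalities. The two hypotheses say that both ends of this chain equal $\nu_{\min}(\mc{F},d)$. So every inequality in the chain must be an equality, and the three claims should be read off from these equalities.

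\textbf{Step 1: how a diagonal meets $\mc{L}_{n,d,j}$.} Fix $i\in[n]$. The points of $D_i$ are $(l,n-i+l)$ with $1\le l\le i$.
\begin{itemize}
\item Such a point lies in the first $d-1-j$ rows iff $l\le d-1-j$, so there are at most $d-1-j$ of them.
\item It lies in the last $j$ columns iff $l>i-j$, so there are at most $j$ of them.
\end{itemize}
Hence $|D_i\cap\mc{L}_{n,d,j}|\le d-1$ for every $i$. Moreover, a point of $D_i$ lies in $\mc{S}_{n,d,j}$ iff $d-j\le l\le i-j$, which forces $i\ge d$. For $i\ge d$ the two ranges $l\le d-1-j$ and $l>i-j$ are disjoint and both fully realised, so $|D_i\cap\mc{L}_{n,d,j}|=d-1$ exactly.

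\textbf{Step 2: the chain of inequalities.} The diagonals $D_1,\dots,D_n$ partition the upper triangle $\mc{T}_n$, and $\mc{F}\cap\mc{S}_{n,d,j}$ may also contain points strictly below the main diagonal. Therefore
\begin{align*}
\nu_j(\mc{F},d)=|\mc{F}\cap\mc{S}_{n,d,j}| &\ge |\mc{F}\cap\mc{T}_{n,d,j}| =\sum_{i=1}^n|D_i\cap\mc{F}\cap\mc{S}_{n,d,j}|\\
&\ge \sum_{i=1}^n\max\{|D_i\cap\mc{F}|-d+1,0\}.
\end{align*}
The last inequality holds termwise: by Step 1, $|D_i\cap\mc{F}\cap\mc{S}_{n,d,j}|\ge|D_i\cap\mc{F}|-|D_i\cap\mc{L}_{n,d,j}|\ge|D_i\cap\mc{F}|-(d-1)$, and the left-hand side is also $\ge 0$. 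By $j$-Singletonness the left end is $\nu_{\min}(\mc{F},d)$, and by MDS-constructibility the right end is also $\nu_{\min}(\mc{F},d)$. So all the inequalities are equalities.

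\textbf{Step 3: reading off the claims.}
\begin{itemize}
\item \emph{Claim (1).} Equality in the first inequality says $\mc{F}\cap\mc{S}_{n,d,j}$ has no points below the diagonal, i.e. $\mc{F}\cap\mc{S}_{n,d,j}=\mc{F}\cap\mc{T}_{n,d,j}$.
\item \emph{Termwise equality.} Equality in the second inequality holds termwise, because each term satisfies the inequality individually. Thus $|D_i\cap\mc{F}\cap\mc{S}_{n,d,j}|=\max\{|D_i\cap\mc{F}|-d+1,0\}$ for every $i$.
\item \emph{Claim (2).} Let $i\in I(\mc{F},d)$. Then $i\ge d$, since $|D_i|=i$, and the common value is at least $1$, so $D_i\cap\mc{F}\cap\mc{S}_{n,d,j}\neq\emptyset$. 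Conversely, if this intersection is nonempty, then $i\ge d$ by Step 1 and the maximum is positive, so $|D_i\cap\mc{F}|\ge d$.
\item \emph{Claim (3).} If $i\ge d$ and $D_i\cap\mc{F}\cap\mc{S}_{n,d,j}\ne\emptyset$, termwise equality gives $|D_i\cap\mc{F}\cap\mc{L}_{n,d,j}|=d-1=|D_i\cap\mc{L}_{n,d,j}|$. Hence $D_i\cap\mc{L}_{n,d,j}\subseteq\mc{F}$.
\end{itemize}

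The only delicate point is the exact count in Step 1. One must check that for $i\ge d$ the row-part and column-part of $D_i\cap\mc{L}_{n,d,j}$ are disjoint and each is fully present, which comes down to the index check $i-j\ge d-j>d-1-j$. Everything else is bookkeeping with equalities.
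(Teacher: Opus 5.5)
Your proof is correct and takes essentially the same route as the paper. The paper cites this lemma from \cite{bib:neri} without proof, but its block adaptation, Lemma~\ref{lem:punti}, uses your sandwich argument: $\nu_j(\mc{F},d)=|\mc{F}\cap\mc{S}_{n,d,j}|$ is compared with the diagonal sum via the exact count $|D_i\cap\mc{L}_{n,d,j}|=d-1$ for $i\ge d$, which forces every inequality to be an equality. The only cosmetic difference is that the paper first proves, by a separate counting argument, that $I(\mc{F},d)$ lies inside the set of diagonals meeting $\mc{F}\cap\mc{S}_{n,d,j}$, whereas your termwise inequality against $\max\{\cdot,0\}$ summed over all $i$ makes that step automatic.
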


With the following result we aim to address the relationship between MDS-constructibility and MSRD-constructibility in the general case.

\begin{theorem}
\label{th:connection}
    Let $\mathcal{F}$ be a Ferrers diagram of order $s$ and $\mc{F}^m$ be its $m$-block version, let $\delta\in[s], d\in[sm]$  be such that $d-1=m(\delta-1)+r$, with $r\in[m-1]$. Suppose that $(\mathcal{F},\delta)$ is MDS-constructible, and 
    \begin{enumerate}
        \item either $\delta=s$,
        \item or $\delta \leq s-1$ and there exists some $j\in\{\,0,\ldots,\delta-1\,\}$ such that 
    \begin{itemize}
        \item $(\mathcal{F},\delta)$ is $j$-Singleton and $(\mc{F},\delta+1)$ is $j$-Singleton or $(j+1)$-Singleton,
              \item $\mc{F}\cap\mc{S}_{s,\delta,j}$ is triangular covered.
     
    \end{itemize}
    \end{enumerate} 
    Then, $(\mathcal{F}^m,d)$ is MSRD-constructible.
\end{theorem}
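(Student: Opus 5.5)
The plan is to compute both sides of the MSRD-constructibility identity for $(\mc{F}^m,d)$ in terms of data of the contraction $\mc{F}$, and then show that they agree.

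\textbf{Right-hand side.} Since $|D_i^B\cap\mathbf{B}(\mc{F}^m)|=|D_i\cap\mc{F}|$ and $d-1=m(\delta-1)+r$, we have
$$m|D_i^B\cap\mathbf{B}(\mc{F}^m)|-d+1=m(|D_i\cap\mc{F}|-\delta+1)-r.$$
Because $1\le r\le m-1$, this quantity is positive exactly when $|D_i\cap\mc{F}|\ge\delta$. Hence $I_B(\mc{F}^m,d)=I(\mc{F},\delta)=:I$. Using MDS-constructibility of $(\mc{F},\delta)$, the MSRD sum then equals
$$m\sum_{i\in I}\bigl(m(|D_i\cap\mc{F}|-\delta+1)-r\bigr)=m^2\nu_{\min}(\mc{F},\delta)-mr|I|.$$

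\textbf{Reduction in case (2).} Lemma~\ref{le:jsing} applies and gives
$$\nu_{\min}(\mc{F}^m,d)=m^2\nu_{\min}(\mc{F},\delta)-mr\bigl(\nu_{\min}(\mc{F},\delta)-\nu_{\min}(\mc{F},\delta+1)\bigr).$$
So everything reduces to the combinatorial identity
$$\nu_{\min}(\mc{F},\delta)-\nu_{\min}(\mc{F},\delta+1)=|I(\mc{F},\delta)|.$$

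\textbf{Proving the key identity.} Suppose $(\mc{F},\delta+1)$ is $j$-Singleton. Then $\mc{S}_{s,\delta+1,j}$ is $\mc{S}_{s,\delta,j}$ with its top row (row $\delta-j$) removed. Therefore the difference equals the number of dots of $\mc{F}\cap\mc{S}_{s,\delta,j}$ lying in that row. If instead $(\mc{F},\delta+1)$ is $(j+1)$-Singleton, then $\mc{S}_{s,\delta+1,j+1}$ is $\mc{S}_{s,\delta,j}$ with its last column (column $s-j$) removed, and the difference is the number of dots of $\mc{F}\cap\mc{S}_{s,\delta,j}$ in that column. In both cases, the Singleton hypotheses guarantee that these values are really the minima.

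We then need to show that this row count (respectively column count) equals $|I|$. By Lemma~\ref{lem:puntidots}(2), $I$ is exactly the set of diagonals meeting $\mc{F}\cap\mc{S}_{s,\delta,j}$, and by Lemma~\ref{lem:puntidots}(1) this set lies in $\mc{T}_{s,\delta,j}$.

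Here the triangular-covered hypothesis enters. Write $G=(\mc{F}\cap\mc{S}_{s,\delta,j})_{\mathrm{st}}\subseteq\mc{T}_{o}$ with $o=o(G)$. The top row and the last column of $G$ then both have the property that their dots lie on pairwise distinct diagonals of $\mc{T}_o$. Every diagonal meeting $G$ meets the top row: for a dot $(a,b)$, the closure property moves it up to row $1$, and triangular containment pins its diagonal index. Symmetrically, every such diagonal meets the last column.

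Since $o$ is the larger of the top-row and last-column lengths, triangular coverage forces both lengths to equal the number of diagonals of $\mc{T}_o$ that $G$ meets, which is $|I|$. I expect this bookkeeping step to be the main obstacle. It requires tracking the index shift between diagonals of $[s]^2$ and of $\mc{T}_o$, and checking via Lemma~\ref{lem:puntidots}(3) that no diagonal in $I$ is missed or double counted.

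\textbf{Case (1), $\delta=s$.} Here $I\subseteq\{s\}$ and $\nu_{\min}(\mc{F},s)=\max\{|D_s\cap\mc{F}|-s+1,0\}$ is either $0$ or $1$. If it is $0$, both sides vanish. Otherwise $D_s\subseteq\mc{F}$, i.e.\ $\mc{F}=[s]^2$ is full on the main diagonal. I would compute $\nu_{\min}(\mc{F}^m,d)=m(m-r)$ directly, as in the $\delta=s$ part of Proposition~\ref{prop:jsingtriang}: only one diagonal block survives, with $r$ rows or columns deleted. This matches $m(ms-d+1)=m(m-r)$.
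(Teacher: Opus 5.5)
Your plan follows the paper's route. You identify $I_B(\mc F^m,d)=I(\mc F,\delta)=I$, use Lemma~\ref{le:jsing} for $\nu_{\min}(\mc F^m,d)$, and reduce to showing that the relevant row or column count of $G:=\mc F\cap\mc S_{s,\delta,j}$ equals $|I|$. The gap is in that last step.

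Write $k_r$ and $k_c$ for the lengths of the top row (row $\delta-j$) and the last column (column $s-j$) of $G$. You claim that triangular coverage alone forces $k_r=k_c=|I|$, via ``every diagonal meeting $G$ meets the top row'' and ``symmetrically, every such diagonal meets the last column''. This is false.

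The corner $(\delta-j,s-j)$ lies on $D_\delta$. Triangular coverage says exactly that all dots of $G$ lie on $D_\delta,\ldots,D_{\delta+o(G)-1}$. The longer of the two sides meets all $o(G)=\max\{k_r,k_c\}$ of these diagonals, but the shorter one meets only the first $\min\{k_r,k_c\}$.

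For a concrete instance, take $\mc F=[1,2,2,2]$, $s=4$, $\delta=2$, $j=0$. It satisfies all hypotheses of case~(2):
\begin{itemize}
\item $(\mc F,2)$ is MDS-constructible and $0$-Singleton with $\nu_{\min}(\mc F,2)=3$;
\item $(\mc F,3)$ is $0$-Singleton with $\nu_{\min}(\mc F,3)=0$;
\item $G=\{(2,2),(2,3),(2,4)\}$ is triangular covered.
\end{itemize}
Here $I=\{2,3,4\}$ and $k_r=3$, but $k_c=1$. Your justification of moving a dot up to row~$1$ also fails, since moving it up changes its diagonal.

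The missing ingredient is that the Singleton hypothesis on $(\mc F,\delta+1)$ selects the longer side. We have $\nu_j(\mc F,\delta+1)=\nu_j(\mc F,\delta)-k_r$ and $\nu_{j+1}(\mc F,\delta+1)=\nu_j(\mc F,\delta)-k_c$. So if $(\mc F,\delta+1)$ is $j$-Singleton, then $k_r\ge k_c$ and $k_r=o(G)$. If it is $(j+1)$-Singleton, then $k_c=o(G)$. Only then do triangular coverage and Lemma~\ref{lem:puntidots}(2) give $|I|=o(G)$, which is exactly the count entering Lemma~\ref{le:jsing}. This is how the paper argues.

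Your case~(1) is essentially right but needs tightening. First, $D_s\subseteq\mc F$ gives $\mc T_s\subseteq\mc F$, not $\mc F=[s]^2$. Second, ``only one block survives'' holds only for the deletion index $mj$ with $(\mc F,s)$ $j$-Singleton; this gives the upper bound $m(m-r)$. Third, for the lower bound you need either $\mc T^B_{s,m}\subseteq\mc F^m$ with Proposition~\ref{prop:jsingtriang}, or the paper's sandwich using Lemma~\ref{le:dimMSRD} and Theorem~\ref{th:dimension}.
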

\begin{proof}
{
Assume (1) and, so, $\delta=s$. In this case, one has
$$\nu_{\min}(\mc{F},s)=\sum_{i=1}^s\max\{|D_i\cap \mc{F}|-s+1,0\}=\max\{|D_s\cap \mc{F}|-s+1,0\}\in\{0,1\}.$$
Let $j$ be such that $(\mc{F},s)$ is $j$-Singleton. If $\nu_j(\mc{F},s)=\nu_{\min}(\mc{F},s)=0$, then, also $\nu_{mj}(\mc{F}^m,d)=0=\nu_{\min}(\mc{F}^m,d)$, and thus $(\mc{F}^m,d)$ is clearly MSRD-constructible. 
If we have instead that $\nu_j(\mc{F},s)=\nu_{\min}(\mc{F},s)=1$, then $D_s\cap \mc{F}=D_s$ is full and $\mc{F}\cap\mc{S}_{s,s,j}=\{(s-j,s-j)\}$. In particular, when considering the $m$-expansion $\mc{F}^m$, we get $\nu_{mj}(\mc{F}^m,d)=m(m-r)$. Since also  $D_s^B\cap\mathbf{B}(\mc{F}^m)=D_s^B$, we get
$$m\sum_{i=1}^s
\max\{m|D_i^B\cap \mathbf{B}(\mc{F}^m)|-d+1,0\}=m(m|D_s^B\cap \mathbf{B}(\mc{F}^m)|-d+1)=m(m-r),$$
and hence
\begin{align*} m(m-r)&=m\sum_{i=1}^s
\max\{m|D_i^B\cap \mathbf{B}(\mc{F}^m)|-d+1,0\}\\
&\le \nu_{\min}(\mc{F}^m,d)\le \nu_{mj}(\mc{F}^m,d)=m(m-r),
\end{align*}
where the first inequality holds because the left-hand side is the dimension obtained by Construction~\ref{con:MSRD} and, by the Ferrers Singleton bound of Theorem \ref{th:dimension}, this dimension is at most $\nu_{\min}(\mc{F}^m,d)$.

 This forces $\nu_{\min}(\mc{F}^m,d)=m(m-r)$ and therefore implies that $(\mc{F}^m,d)$ is MSRD-constructible.

Now, we assume (2) and hence $\delta \in [s-1]$.
Suppose that there exists some $j\in\{0,\ldots,\delta-1\}$ such that
$(\mc{F},\delta)$ is $j$-Singleton, $\mc{F}\cap \mc{S}_{s,\delta,j}$ is triangular
covered, and $(\mc{F},\delta)$ is MDS-constructible. Set
$$I:=I(\mc{F},\delta).$$

First suppose that $(\mc{F},\delta+1)$ is $j$-Singleton.
Moreover, let $\mc{F}^m$ be the $m$-block version of $\mc{F}$. We are in the
hypotheses of Lemma~\ref{le:jsing} part~(1), hence $(\mc{F}^m,d)$ is $mj$-Singleton and
$$\nu_{\min}(\mc{F}^m,d)=m^2\nu_{\min}(\mc{F},\delta)-mr\bigl(\nu_j(\mc{F},\delta)-\nu_j(\mc{F},\delta+1)\bigr).$$
Observe that
$$\nu_j(\mc{F},\delta)-\nu_j(\mc{F},\delta+1)=|(\mc{F}\cap \mc{S}_{s,\delta,j})\setminus(\mc{F}\cap \mc{S}_{s,\delta+1,j})|=|\{(\delta-j,l)\in \mc{F}\,:\,l\in[s-j]\}|.$$
Set
$$k:=\nu_j(\mc{F},\delta)-\nu_j(\mc{F},\delta+1).$$
The number of dots in the first row of $\mc{F}\cap \mc{S}_{s,\delta,j}$ is
$k=\nu_j(\mc{F},\delta)-\nu_j(\mc{F},\delta+1),$
whereas the number of dots in its last column is
$\nu_j(\mc{F},\delta)-\nu_{j+1}(\mc{F},\delta+1).$
Since $(\mc{F},\delta+1)$ is also $j$-Singleton, the first row of
$\mc{F}\cap \mc{S}_{s,\delta,j}$ has length greater than or equal to its last
column. Hence
$k=o(\mc{F}\cap \mc{S}_{s,\delta,j}).$
By Lemma~\ref{lem:puntidots}, we have
$$I=\{i\in[s]\,:\,D_i\cap \mc{F}\cap \mc{S}_{s,\delta,j}\neq\emptyset\}.$$
Since $\mc{F}\cap \mc{S}_{s,\delta,j}$ is triangular covered, it follows that $|I|=k$.
Therefore, using the MDS-constructibility of $(\mc{F},\delta)$, we get
\begin{align*}
\nu_{\min}(\mc{F}^m,d)
&=m^2\nu_{\min}(\mc{F},\delta)-mr|I| \\
&=m^2\sum_{i=1}^s\max\{|D_i\cap \mc{F}|-\delta+1,0\}-mr|I| \\
&=m^2\sum_{i\in I}(|D_i\cap \mc{F}|-\delta+1)-mr|I| \\
&=m\sum_{i\in I}\bigl(m|D_i\cap \mc{F}|-m(\delta-1)-r\bigr) \\
&=m\sum_{i\in I}\bigl(m|D_i\cap \mc{F}|-(d-1)\bigr).
\end{align*}
For every $i\in[s]$ we have
$$D_i^B\cap \mathbf{B}(\mc{F}^m)=\phi_m(D_i\cap \mc{F}),$$
and hence
$$|D_i^B\cap \mathbf{B}(\mc{F}^m)|=|D_i\cap \mc{F}|.$$
Moreover, since $1\le r\le m-1$, we have
$$m|D_i^B\cap \mathbf{B}(\mc{F}^m)|-d+1>0
\quad\Longleftrightarrow\quad
|D_i\cap \mc{F}|\ge\delta.$$
Thus
$$I_B(\mc{F}^m,d)=I(\mc{F},\delta)=I.$$
Consequently,
\begin{align*}
\nu_{\min}(\mc{F}^m,d)
&=m\sum_{i\in I}\bigl(m|D_i^B\cap \mathbf{B}(\mc{F}^m)|-(d-1)\bigr) \\
&=m\sum_{i=1}^s
\max\{m|D_i^B\cap \mathbf{B}(\mc{F}^m)|-d+1,0\}.
\end{align*}
Therefore, $(\mc{F}^m,d)$ is MSRD-constructible.
}
{
Now suppose that $(\mc{F},\delta+1)$ is $(j+1)$-Singleton. Then we are in the
hypotheses of Lemma~\ref{le:jsing} part~(2), so $(\mc{F}^m,d)$ is $(mj+r)$-Singleton and
$$\nu_{\min}(\mc{F}^m,d)=
m^2\nu_{\min}(\mc{F},\delta)- mr\bigl(\nu_j(\mc{F},\delta)-\nu_{j+1}(\mc{F},\delta+1)\bigr).$$
Observe that
$$\nu_j(\mc{F},\delta)-\nu_{j+1}(\mc{F},\delta+1)=|(\mc{F}\cap \mc{S}_{s,\delta,j})\setminus(\mc{F}\cap \mc{S}_{s,\delta+1,j+1})|=|\{(i,s-j)\in F:i\in\{\delta-j,\ldots,s\}\}|.$$
Set
$$k:=\nu_j(\mc{F},\delta)-\nu_{j+1}(\mc{F},\delta+1).$$
Since $(\mc{F},\delta+1)$ is $(j+1)$-Singleton, the last column of
$\mc{F}\cap \mc{S}_{s,\delta,j}$ has length greater than or equal to its first row.
Hence, $k=o(\mc{F}\cap \mc{S}_{s,\delta,j})$.
By Lemma~\ref{lem:puntidots}, we have
$$I=\{i\in[s]:D_i\cap \mc{F}\cap \mc{S}_{s,\delta,j}\neq\emptyset\}.$$
Since $\mc{F}\cap \mc{S}_{s,\delta,j}$ is triangular covered, it follows that $|I|=k$.
Therefore, using the MDS-constructibility of $(\mc{F},\delta)$, we obtain
\begin{align*}
\nu_{\min}(\mc{F}^m,d)
&=m^2\nu_{\min}(\mc{F},\delta)-mr|I| \\
&=m^2\sum_{i=1}^s\max\{|D_i\cap \mc{F}|-\delta+1,0\}-mr|I| \\
&=m^2\sum_{i\in I}(|D_i\cap \mc{F}|-\delta+1)-mr|I| \\
&=m\sum_{i\in I}\bigl(m|D_i\cap \mc{F}|-m(\delta-1)-r\bigr) \\
&=m\sum_{i\in I}\bigl(m|D_i\cap \mc{F}|-d+1\bigr).
\end{align*}
As in the previous case, we have
$$I_B(\mc{F}^m,d)=I(\mc{F},\delta)=I.$$
Thus
\begin{align*}
\nu_{\min}(\mc{F}^m,d)
&=m\sum_{i\in I}\bigl(m|D_i^B\cap \mathbf{B}(\mc{F}^m)|-(d-1)\bigr) \\
&=m\sum_{i=1}^s
\max\{m|D_i^B\cap \mathbf{B}(\mc{F}^m)|-d+1,0\}.
\end{align*}
Therefore, $(\mc{F}^m,d)$ is MSRD-constructible. }
\end{proof}

In the following example, we show that if the triangular covered property fails, then the block Ferrers diagram may not be MSRD-constructible. 
\begin{example}
Consider $\mc{F}=[0,0,2,4,4,6,7]$, and $\mc{F}^2$ its $2$-block version. Let us fix  
 $d=6$, which corresponds to $\delta=3$. We can observe that $(\mc{F},3)$ is MDS-constructible, it is $2$-Singleton and $(\mc{F},4)$ is $3$-Singleton, therefore, to apply Theorem \ref{th:connection} we are left to check if $\mc{F}\cap\setq{S}{7}{3}{2}$ is triangular covered:
        \begin{center}
        $\qquad\mc{F}\cap\setq{S}{7}{3}{2}\quad\quad\quad\quad(\mc{F}\cap\setq{S}{7}{3}{2})_{\mathrm{st}}$
        
        \begin{tikzpicture}[scale=0.4]
\draw[help lines, very thick, white, fill=blue!10] (-2,1)-- (-2,-6)-- (5,-6)--(5,1)--(-2,1);

\draw[help lines, thick, red] (0.25,0.75)-- (0.25,-0.75)-- (1.25,-0.75)--(1.25,-2.75)--(1.25,-2.75)--(2.75,-2.75)--(2.75,0.75)--(0.25,0.75);

 %prima riga
\draw[fill=black] (0.5,0.5) circle (0.1cm);
\draw[fill=black] (1.5,0.5) circle (0.1cm);
\draw[fill=black] (2.5,0.5) circle (0.1cm);
\draw[fill=black] (3.5,0.5) circle (0.1cm);
\draw[fill=black] (4.5,0.5) circle (0.1cm);

%seconda riga
\draw[fill=black] (0.5,-0.5) circle (0.1cm);
\draw[fill=black] (1.5,-0.5) circle (0.1cm);
\draw[fill=black] (2.5,-0.5) circle (0.1cm);
\draw[fill=black] (3.5,-0.5) circle (0.1cm);
\draw[fill=black] (4.5,-0.5) circle (0.1cm);

%terza riga
\draw[fill=black] (1.5,-1.5) circle (0.1cm);
\draw[fill=black] (2.5,-1.5) circle (0.1cm);
\draw[fill=black] (3.5,-1.5) circle (0.1cm);
\draw[fill=black] (4.5,-1.5) circle (0.1cm);

%quarta riga
\draw[fill=black] (1.5,-2.5) circle (0.1cm);
\draw[fill=black] (2.5,-2.5) circle (0.1cm);
\draw[fill=black] (3.5,-2.5) circle (0.1cm);
\draw[fill=black] (4.5,-2.5) circle (0.1cm);

%quinta riga
\draw[fill=black] (3.5,-3.5) circle (0.1cm);
\draw[fill=black] (4.5,-3.5) circle (0.1cm);

%sesta riga
\draw[fill=black] (3.5,-4.5) circle (0.1cm);
\draw[fill=black] (4.5,-4.5) circle (0.1cm);

%settima riga
\draw[fill=black] (4.5,-5.5) circle (0.1cm);

%ottava riga
%\draw[fill=black] (4.5,-6.5) circle (0.1cm);

\end{tikzpicture}
\quad\quad\begin{tikzpicture}[scale=0.4]
\draw[help lines, very thick, white, fill=blue!10] (-1,1)-- (-1,-3)-- (3,-3)--(3,1)--(-1,1);

\draw[help lines, thick, red] (-0.75,0.75)-- (-0.75,0.25)--(2.25,-2.75)--(2.75,-2.75)--(2.75,0.75)--(-0.75,0.75);

 %prima riga
\draw[fill=black] (0.5,0.5) circle (0.1cm);
\draw[fill=black] (1.5,0.5) circle (0.1cm);
\draw[fill=black] (2.5,0.5) circle (0.1cm);

%seconda riga
\draw[fill=black] (0.5,-0.5) circle (0.1cm);
\draw[fill=black] (1.5,-0.5) circle (0.1cm);
\draw[fill=black] (2.5,-0.5) circle (0.1cm);

%terza riga
\draw[fill=black] (1.5,-1.5) circle (0.1cm);
\draw[fill=black] (2.5,-1.5) circle (0.1cm);

%quarta riga
\draw[red,fill=red] (1.5,-2.5) circle (0.1cm);
\draw[fill=black] (2.5,-2.5) circle (0.1cm);
\end{tikzpicture}
        \end{center}
        We can see that $\mc{F}\cap\setq{S}{7}{3}{2}$ is not triangular covered, therefore to verify if  $(\mc{F}^2,6)$ is MSRD-constructible we need to check it explicitly. 
        For the diagonals we have
        \begin{equation*}
            m\sum_{i=1}^s\max\{m|D^B_i \cap \mathbf{B}(\mc{F}^2)|-d+1,0\}=2(1+3+5+5+1)=30.
        \end{equation*}
        Now, from Lemma \ref{le:jsing} we have that $(\mc{F}^2,6)$ is $5$-Singleton, and hence 
        \begin{equation*}
            \nu_{\min}(\mathcal{F}^2,6)=\nu_{5}(\mc{F}^2,6)=32.
        \end{equation*}
        Thus, we conclude that $(\mc{F}^2,6)$ is not MSRD-constructible. 
\end{example}

As shown in the following propositions, the triangular covered property is related to the MDS-constructibility.

\begin{proposition}\label{prop:mdstriang}
    Let $\mc{F}$ be a Ferrers diagram of order $s$ and let $(\mc{F},\delta)$ be MDS-constructible, with $2\le \delta\le s$. Let $j\in\{0,\ldots,\delta-1\}$ be such that $(\mc{F},\delta)$ is $j$-Singleton. Then we have the following.
    \begin{enumerate}
        \item If $0<j<\delta -1$, then $\mc{F}\cap\mc{S}_{s,\delta-1,j}$ and $\mc{F}\cap\mc{S}_{s,\delta-1,j-1}$ are triangular covered.
        \item If $j=0$, then $\mc{F}\cap\mc{S}_{s,\delta-1,j}$ is triangular covered.
        \item If $j=\delta-1$, then $\mc{F}\cap\mc{S}_{s,\delta-1,j-1}$ is triangular covered.
    \end{enumerate}
\end{proposition}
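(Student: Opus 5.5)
The plan is to reduce all three items to one statement and obtain the rest by adjunction. Transposing across the antidiagonal sends $\mc{F}$ to $\mc{F}^{\tran}$, rows to columns, and $\mc{S}_{s,\delta,j}$ to $\mc{S}_{s,\delta,\delta-1-j}$. It preserves the diagonals $D_i$, hence MDS-constructibility, and turns $j$-Singleton into $(\delta-1-j)$-Singleton. It also maps $\mc{S}_{s,\delta-1,j-1}$ onto $\mc{S}_{s,\delta-1,\delta-1-j}$, and it preserves the property of being triangular covered, since $\mc{T}_o^{\tran}=\mc{T}_o$ and the minimum order is symmetric in first row and last column. Hence the claim for $\mc{F}\cap\mc{S}_{s,\delta-1,j-1}$ (with $j\ge1$) is the claim for $\mc{F}^{\tran}\cap\mc{S}_{s,\delta-1,j'}$ with $j'=\delta-1-j\le\delta-2$. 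It therefore suffices to prove the following: \emph{if $(\mc{F},\delta)$ is MDS-constructible and $j$-Singleton with $j<\delta-1$, then $\mc{F}\cap\mc{S}_{s,\delta-1,j}$ is triangular covered.} This settles item (2), the first half of item (1), and, after adjunction, item (3) and the second half of item (1).

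I will use a diagonal reformulation of being triangular covered. Let $G\subseteq[s]^2$ be a nonempty Ferrers-type set with top row $a$ and last column $b$. Then $G$ is triangular covered if and only if every dot $(i,l)\in G$ satisfies $l-i\ge b-a-o(G)+1$. Equivalently, $G$ meets at most $o(G)$ consecutive diagonals $D_k$, ending at the diagonal through $(a,b)$. So it suffices to show that the number of diagonals met by $G:=\mc{F}\cap\mc{S}_{s,\delta-1,j}$ is at most the length of its first row, which is row $a=\delta-1-j$. Here $b=s-j$, and $G$ is $\mc{F}\cap\mc{S}_{s,\delta,j}$ with one extra row $a$ added on top.

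The key steps are as follows.

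First, by Lemma~\ref{lem:puntidots}(1),(2), the dots of $\mc{F}\cap\mc{S}_{s,\delta,j}$ lie in $\mc{T}_s$ and meet exactly the diagonals $D_i$ with $i\in I(\mc{F},\delta)$.

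Second, by Lemma~\ref{lem:puntidots}(3), each $D_i$ with $i\in I$ contains all of $D_i\cap\mc{L}_{s,\delta,j}$. Row $a$ lies in $\mc{L}_{s,\delta,j}$, and I will check that for $i\in I$ the diagonal $D_i$ crosses row $a$ in a column $\le s-j$. This gives $|I|$ distinct dots in the first row of $G$. The new row may also contribute dots on at most one further diagonal beyond those in $I$, namely the one through $(a,s-j)$, and that dot is present whenever row $a$ is nonempty.

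Third, combining these, the set of diagonals met by $G$ is $I$ together with possibly the diagonal through $(a,b)$, and these diagonals are consecutive. Moreover the first row of $G$ has at least that many dots. This gives the inequality in the diagonal reformulation, hence $G_{\mathrm{st}}\subseteq\mc{T}_{o(G)}$.

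The main obstacle is the second step: verifying that the diagonals in $I$ really hit row $a$ inside the allowed columns, and that no "gap" diagonal appears, so that consecutiveness holds. I would handle this with the index bookkeeping $D_i\ni(a,s-i+a)$, requiring $s-i+a\le s-j$, i.e.\ $i\ge\delta-1$. For $i\in I$ this holds because $i\ge\delta$ by Lemma~\ref{lem:puntidots}(2). If consecutiveness fails, I would fall back on the Ferrers property: a dot on a skipped diagonal would have dots to its right and above it lying in $\mc{S}_{s,\delta,j}$. Such dots would lie on diagonals outside $I$, contradicting Lemma~\ref{lem:puntidots}(2). The empty case is trivial.
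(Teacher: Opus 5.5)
Your adjunction reduction is correct: $\mc{S}_{s,d,j}$ is sent to $\mc{S}_{s,d,d-1-j}$, every $D_i$ is fixed, and being triangular covered is symmetric. Your core idea is the same as the paper's: use Lemma~\ref{lem:puntidots}(2),(3) to show that each diagonal $D_i$ with $i\in I=I(\mc{F},\delta)$ has a dot of $\mc{F}$ in the added row $a=\delta-1-j$. The paper uses only $D_{\max I}$, and treats the added column in parallel rather than by adjunction.

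However, the conclusion you draw in your second and third steps is false. The added row is not confined to the diagonals in $I$ plus the corner diagonal $D_{\delta-1}$. Take $s=3$, $\mc{F}=[1,1,2]$, $\delta=2$, $j=0$. Then $\nu_0(\mc{F},2)=1<2=\nu_1(\mc{F},2)$, $|D_1\cap\mc{F}|=|D_3\cap\mc{F}|=1$ and $|D_2\cap\mc{F}|=2$. So $(\mc{F},2)$ is MDS-constructible and $0$-Singleton with $I=\{2\}$. Yet $G=\mc{F}\cap\mc{S}_{3,1,0}=\mc{F}$ has first row $\{(1,1),(1,2),(1,3)\}$, which meets $D_3\notin I\cup\{1\}$.

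The added row can even contain dots strictly below the main diagonal, lying on no $D_k$ at all. For $\mc{F}=[2,2,2,2,5]$, $\delta=3$, $j=0$ (MDS-constructible and $0$-Singleton), one has $(2,1)\in G$. So the ``diagonals'' in your reformulation must be all lines $l-i=\mathrm{const}$, as in your inequality $l-i\ge b-a-o(G)+1$, not just the $D_k$.

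The repair uses only what you already have.
\begin{enumerate}
\item If the first row of $G$ has $k$ dots, right-justification shows it meets exactly the $k$ consecutive lines $l-i=s-\delta+1,\ldots,s-\delta-k+2$.
\item Your second step puts $(a,s-\max I+a)\in D_{\max I}$ into this row. Hence these lines include every $D_i$ with $i\in I$.
\item By Lemma~\ref{lem:puntidots}(1),(2), the dots of $\mc{F}\cap\mc{S}_{s,\delta,j}$ lie only on such $D_i$.
\item So $G$ meets exactly the lines met by its first row, that is, $k\le o(G)$ of them, which is your criterion.
\end{enumerate}
If $I=\emptyset$, then $G$ is a single row and the claim is immediate. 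Your consecutiveness fallback is unnecessary: the lines met by a set justified at its top-right corner always form an interval starting at that corner.
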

\begin{proof}
    
    {
    Let us consider the case $0<j<\delta-1$, the other two cases are similar.
Set
$$I:=I(\mc{F},\delta)=\{i\in[s]: |D_i\cap \mc{F}|\ge \delta\}.$$
If $I=\emptyset$, then there is nothing to prove. Hence, assume that $I\neq\emptyset$.
By Lemma~\ref{lem:puntidots}, we have
$$I=\{i\in[s]:D_i\cap \mc{F}\cap \mc{S}_{s,\delta,j}\neq\emptyset\}.$$
Moreover, for every $i\in I$, Lemma~\ref{lem:puntidots} gives
$$D_i\cap \mc{F}\supseteq D_i\cap \mc{L}_{s,\delta,j}.$$
Set $k:=\max I$. Since $k\in I$, we have $D_k\cap \mc{F}\cap \mc{S}_{s,\delta,j}\neq\emptyset$.
By the maximality of $k$, this implies
$D_k\cap \mc{F}\cap R_{\delta-j}\neq\emptyset$ and $D_k\cap \mc{F}\cap C_{s-j+1}\neq\emptyset$.
From this we get that both
$\mc{F}\cap \mc{S}_{s,\delta-1,j}$ and $\mc{F}\cap \mc{S}_{s,\delta-1,j-1}$ are triangular covered.
    }
\end{proof}

We can reverse the previous result as follows.
\begin{proposition}\label{prop:mds_triang}
     Let $\mc{F}$ be a Ferrers diagram of order $s$ and let $2\le \delta\le s$. Suppose that:
     \begin{enumerate}
         \item $(\mc{F},\delta-1)$ is MDS-constructible;
         \item there exists $j \in \{0,\ldots,\delta-2\}$ such that:
         \begin{itemize}
             \item  $(\mc{F},\delta-1)$ is $j$-Singleton;
         \item $(\mc{F},\delta)$ is either $j$-Singleton or $(j+1)$-Singleton;
         \item $\mc{F}\cap\mc{S}_{s,\delta-1,j}$ is triangular covered.
          \end{itemize}
     \end{enumerate}
Then $(\mc{F},\delta)$ is MDS-constructible.
\end{proposition}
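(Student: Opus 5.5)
Set $\delta':=\delta-1$ and $I':=I(\mc{F},\delta')$. I would mimic the $m=1$ version of the proof of Theorem~\ref{th:connection}, with $\delta'$ playing the role of $\delta$ and distance $\delta'+1=\delta$ playing the role of $d$. The goal is an exact formula for $\nu_{\min}(\mc{F},\delta)$ in terms of $\nu_{\min}(\mc{F},\delta')$ and $|I'|$, which is then compared with the diagonal sum for distance $\delta$.

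\emph{Step 1: computing $\nu_{\min}(\mc{F},\delta)$.}

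\begin{enumerate}
\item Suppose first that $(\mc{F},\delta)$ is $j$-Singleton. Then $\nu_{\min}(\mc{F},\delta)=\nu_j(\mc{F},\delta)=\nu_j(\mc{F},\delta')-k$, where $k$ is the number of dots of $\mc{F}$ in the first row $R_{\delta'-j}$ of $\mc{F}\cap\mc{S}_{s,\delta',j}$.
\item Suppose instead that $(\mc{F},\delta)$ is $(j+1)$-Singleton. Then $\nu_{\min}(\mc{F},\delta)=\nu_{j+1}(\mc{F},\delta)=\nu_j(\mc{F},\delta')-k$, where $k$ is now the number of dots in the last column $C_{s-j}$ of $\mc{F}\cap\mc{S}_{s,\delta',j}$.
\end{enumerate}

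In either case the Singleton hypothesis tells us the removed line is the longer one. Indeed, $\nu_j(\mc{F},\delta)\le\nu_{j+1}(\mc{F},\delta)$ says exactly that the first row has at least as many dots as the last column, and symmetrically in case (2). Hence $k=o(\mc{F}\cap\mc{S}_{s,\delta',j})$, and so
$$\nu_{\min}(\mc{F},\delta)=\nu_{\min}(\mc{F},\delta')-o(\mc{F}\cap\mc{S}_{s,\delta',j}).$$

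\emph{Step 2: identifying $k$ with $|I'|$.} If $\delta'\ge2$, Lemma~\ref{lem:puntidots} applied to $(\mc{F},\delta')$ and $j$ gives
$$I'=\{\,i:D_i\cap\mc{F}\cap\mc{S}_{s,\delta',j}\neq\emptyset\,\}.$$
Since $\mc{F}\cap\mc{S}_{s,\delta',j}$ is triangular covered, its standard version sits inside $\mc{T}_{o}$ and contains both the full first row and the full last column of that triangle. Therefore it meets exactly $o$ consecutive diagonals, and $|I'|=k$. The case $\delta'=1$ needs separate handling, since Lemma~\ref{lem:puntidots} requires $d\ge2$. There $j=0$, $\mc{S}_{s,1,0}=[s]^2$, and $\mc{F}$ itself is triangular covered. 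So the nonempty diagonals number exactly $o(\mc{F})$, and $I'$ is the set of nonempty diagonals, which again gives $|I'|=k$.

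\emph{Step 3: comparing with the diagonal sum.} Using the MDS-constructibility of $(\mc{F},\delta')$,
$$\nu_{\min}(\mc{F},\delta)=\sum_{i\in I'}(|D_i\cap\mc{F}|-\delta'+1)-|I'|=\sum_{i\in I'}(|D_i\cap\mc{F}|-\delta+1).$$
The diagonals with $|D_i\cap\mc{F}|\ge\delta$ lie in $I'$, and the remaining diagonals of $I'$ have $|D_i\cap\mc{F}|=\delta-1$, so they contribute $0$. Hence the right-hand side equals $\sum_{i=1}^s\max\{|D_i\cap\mc{F}|-\delta+1,0\}$, and $(\mc{F},\delta)$ is MDS-constructible. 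Alternatively, one can just note that this sum is at most $\nu_{\min}(\mc{F},\delta)$ by the construction and Theorem~\ref{th:dimension}, and is at least it by the displayed identity.

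The main obstacle is Step 2: checking carefully that triangular coverage, together with the fact that the longest side equals $o$, gives exactly $o$ nonempty diagonals meeting $\mc{S}_{s,\delta',j}$ in the chosen coordinates.
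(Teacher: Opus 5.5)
Your proposal is correct and follows essentially the paper's argument: Lemma~\ref{lem:puntidots} identifies $I(\mc{F},\delta-1)$ with the diagonals meeting $\mc{F}\cap\mc{S}_{s,\delta-1,j}$, triangular coverage gives $|I(\mc{F},\delta-1)|=k$, and $\nu_{\min}(\mc{F},\delta)=\nu_{\min}(\mc{F},\delta-1)-|I(\mc{F},\delta-1)|$ is then matched with the diagonal sum; you also make explicit two points the paper leaves implicit, namely that the Singleton hypothesis forces $k=o(\mc{F}\cap\mc{S}_{s,\delta-1,j})$, and the case $\delta=2$, where Lemma~\ref{lem:puntidots} does not formally apply. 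One small slip: the standard version need only contain the full first row \emph{or} the full last column of $\mc{T}_o$ (whichever realizes $o$), not both, but either one already meets all $o$ diagonals, so your conclusion stands.
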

\begin{proof}
{
Let
$$I:=I(\mc{F},\delta-1)=\{i\in[s]: |D_i\cap \mc{F}|\ge \delta-1\}.$$
If $I=\emptyset$, then by the MDS-constructibility of $(\mc{F},\delta-1)$ we have
$\nu_{\min}(\mc{F},\delta-1)=0$. Hence also $\nu_{\min}(\mc{F},\delta)=0$, and
$$\sum_{i=1}^s\max\{|D_i\cap \mc{F}|-\delta+1,0\}=0.$$
Thus, $(\mc{F},\delta)$ is MDS-constructible.
Hence, assume that $I\neq\emptyset$.
By Lemma~\ref{lem:puntidots} applied to the MDS-constructible pair $(\mc{F},\delta-1)$, we have
$$I=\{i\in[s]:D_i\cap \mc{F}\cap \mc{S}_{s,\delta-1,j}\neq\emptyset\}.
$$

Suppose first that $(\mc{F},\delta)$ is $j$-Singleton. Let $k$ be the number of dots in the first row of
$$\mc{F}\cap \mc{S}_{s,\delta-1,j}.$$
Since $\mc{F}\cap \mc{S}_{s,\delta-1,j}$ is triangular covered and the indices in $I$ are precisely the indices of the diagonals meeting $\mc{F}\cap \mc{S}_{s,\delta-1,j}$, we have $k=|I|$.
Moreover,
$$\nu_j(\mc{F},\delta)=\nu_j(\mc{F},\delta-1)-k.$$
Therefore,
\begin{align*}
\nu_{\min}(\mc{F},\delta)
&=\nu_j(\mc{F},\delta) =\nu_j(\mc{F},\delta-1)-k \\
&=\nu_{\min}(\mc{F},\delta-1)-|I| =\sum_{i=1}^s\max\{|D_i\cap \mc{F}|-\delta+2,0\}-|I| \\
&=\sum_{i\in I}(|D_i\cap \mc{F}|-\delta+2)-|I| =\sum_{i\in I}(|D_i\cap \mc{F}|-\delta+1) \\
&=\sum_{i=1}^s\max\{|D_i\cap \mc{F}|-\delta+1,0\}.
\end{align*}
Hence, $(\mc{F},\delta)$ is MDS-constructible.

Now suppose that $(\mc{F},\delta)$ is $(j+1)$-Singleton. Let $k$ be the number of dots in the last column of
$$\mc{F}\cap \mc{S}_{s,\delta-1,j}.$$
Again, since $\mc{F}\cap \mc{S}_{s,\delta-1,j}$ is triangular covered and the indices in $I$ are precisely the indices of the diagonals meeting $\mc{F}\cap \mc{S}_{s,\delta-1,j}$, we have $k=|I|$.
Moreover,
$$\nu_{j+1}(\mc{F},\delta)=\nu_j(\mc{F},\delta-1)-k,$$
and the same computation as in the previous case gives that $(\mc{F},\delta)$ is MDS-constructible.}
\end{proof}

From Proposition \ref{prop:mdstriang} and Proposition \ref{prop:mds_triang} we have that in Theorem \ref{th:connection} we can replace the hypothesis of triangular covered with the MDS-constructibility of $(\mc{F},\delta+1)$.  

\begin{theorem}
\label{th:connection2}
Let $\mathcal{F}$ be a Ferrers diagram of order $s$ and $\mc{F}^m$ be its $m$-block version, let $\delta\in[s], d\in[sm]$  be such that $d-1=m(\delta-1)+r$, with $r\in[m-1]$. Suppose that $(\mathcal{F},\delta)$ is MDS-constructible, and 
    \begin{enumerate}
        \item either $\delta=s$,
        \item or $\delta \leq s-1$,  $(\mathcal{F},\delta+1)$ is MDS-constructible, and there exists some $j\in\{\,0,\ldots,\delta-1\,\}$ such that 
$(\mathcal{F},\delta)$ is $j$-Singleton and $(\mc{F},\delta+1)$ is $j$-Singleton or $(j+1)$-Singleton. 
    \end{enumerate} 
    Then, $(\mathcal{F}^m,d)$ is MSRD-constructible.
\end{theorem}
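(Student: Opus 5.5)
The plan is to reduce Theorem~\ref{th:connection2} to Theorem~\ref{th:connection}. Case (1), $\delta=s$, is literally case (1) of Theorem~\ref{th:connection}, so nothing new is needed there. In case (2) the hypotheses on $(\mc{F},\delta)$ and $(\mc{F},\delta+1)$ concerning $j$-Singletonness are already those required by Theorem~\ref{th:connection}(2). It therefore remains to show that $\mc{F}\cap\mc{S}_{s,\delta,j}$ is triangular covered, and this is where the extra hypothesis that $(\mc{F},\delta+1)$ is MDS-constructible enters.

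To obtain triangular coverage I would apply Proposition~\ref{prop:mdstriang} to the MDS-constructible pair $(\mc{F},\delta+1)$, where $2\le\delta+1\le s$ holds because $\delta\le s-1$. This proposition produces triangular covered sets of the form $\mc{F}\cap\mc{S}_{s,\delta,j'}$ for suitable $j'$, and I would split according to which Singleton index $(\mc{F},\delta+1)$ has.

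\emph{If $(\mc{F},\delta+1)$ is $j$-Singleton}, then $j\le\delta-1<\delta$, so we are in case (1) or case (2) of Proposition~\ref{prop:mdstriang} with index $j$. Either case yields that $\mc{F}\cap\mc{S}_{s,\delta,j}$ is triangular covered.

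\emph{If $(\mc{F},\delta+1)$ is $(j+1)$-Singleton}, then $1\le j+1\le\delta$, so we are in case (1) or case (3) of Proposition~\ref{prop:mdstriang} with index $j+1$. Either case yields that $\mc{F}\cap\mc{S}_{s,\delta,(j+1)-1}=\mc{F}\cap\mc{S}_{s,\delta,j}$ is triangular covered.

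The only point needing care is the index bookkeeping: the role of $\delta$ in Proposition~\ref{prop:mdstriang} is played by $\delta+1$ here, and in each subcase the boundary cases ($j=0$, or $j+1=\delta$) must be matched to the correct item of the proposition. For $\delta=1$ the sets $\mc{S}_{s,0,\cdot}$ are not defined; I would note that then $j=0$, and the required statement follows from Proposition~\ref{prop:mdstriang}(2) applied with $\delta+1=2$. This is the step I expect to be the main (mild) obstacle.

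With triangular coverage in hand in every case, Theorem~\ref{th:connection}(2) applies and gives that $(\mc{F}^m,d)$ is MSRD-constructible.
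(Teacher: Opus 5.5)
Your proposal is correct and follows the paper's route: apply Proposition~\ref{prop:mdstriang} to the MDS-constructible pair $(\mc{F},\delta+1)$ to get that $\mc{F}\cap\mc{S}_{s,\delta,j}$ is triangular covered, then conclude with Theorem~\ref{th:connection}. Your case matching with the given $j$ is in fact more careful than the paper's written proof, which fixes $j=\delta-1$ by asserting that $\mc{F}$ is monotone (this is not a hypothesis of the theorem); your $\delta=1$ worry is moot, because applying the proposition at $\delta+1$ only produces the sets $\mc{S}_{s,\delta,\cdot}$, which are defined for $\delta\ge 1$, and your general case analysis already covers $\delta=1$ (where item (3), not item (2), is needed if $(\mc{F},2)$ is only $1$-Singleton).
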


{
\begin{proof}
Let $d-1=m(\delta-1)+r$, with $0\le r\le m-1$. If $r=0$, the claim follows directly from Theorem \ref{th:conndiv}.

Assume now that $r\in[m-1]$. By hypothesis, the pairs $(\mc F,\delta)$ and $(\mc F,\delta+1)$ are MDS-constructible. Since $\mc F$ is monotone, one has
$$\nu_{\min}(\mc F,\delta)=\nu_{\delta-1}(\mc F,\delta)
\quad\text{and}\quad
\nu_{\min}(\mc F,\delta+1)=\nu_{\delta}(\mc F,\delta+1).$$
Moreover, by Proposition \ref{prop:mdstriang}, the diagram
$\mc F\cap \mc S_{s,\delta,\delta-1}$
is triangular covered. Therefore, all the assumptions of Theorem \ref{th:connection} are satisfied with $j=\delta-1$.
Hence $(\mc F^m,d)$ is MSRD-constructible.
\end{proof}
}

In \cite[Proposition 4.15]{bib:neri} it has been shown that, for a strictly monotone (or, by adjunction, initially convex) Ferrers diagram  $\mc{F}$ of order $s$, the pair $(\mc{F},d)$ is MDS-constructible for any $2\le d\le s$. Therefore, we get the following result.

\begin{corollary}\label{th:monoMSRD1}
 Let $\mc{F}$ be an $m$-block Ferrers diagram of order $sm$. If $\mc{F}$ is strictly $m$-monotone (or initially $m$-convex), then for any $2\le d\le sm$, the pair $(\mc{F},d)$ is MSRD-constructible. In particular, $(\mc{T}_{s,m}^B,d)$ is MSRD-constructible for every $d\in[sm]$.

\end{corollary}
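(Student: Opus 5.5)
The plan is to reduce to the contraction. Let $\mc{F}'=\phi_m^{-1}(\mc{F})$, a Ferrers diagram of order $s$, and write $d-1=m(\delta-1)+r$ with $\delta\in[s]$ and $0\le r\le m-1$. By \cite[Proposition 4.15]{bib:neri}, if $\mc{F}$ is strictly $m$-monotone then $\mc{F}'$ is strictly monotone, so $(\mc{F}',\delta')$ is MDS-constructible for every $2\le\delta'\le s$. For $\delta'=1$ this is also trivially true, since $\nu_{\min}(\mc{F}',1)=|\mc{F}'|=\sum_i|D_i\cap\mc{F}'|$.

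The argument then splits on $r$.

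If $r=0$, MDS-constructibility of $(\mc{F}',\delta)$ gives MSRD-constructibility of $(\mc{F},d)$ directly by Theorem~\ref{th:conndiv}.

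If $r\in[m-1]$, we apply Theorem~\ref{th:connection2}.
\begin{itemize}
\item When $\delta=s$, case (1) of that theorem applies immediately.
\item When $\delta\le s-1$, both $(\mc{F}',\delta)$ and $(\mc{F}',\delta+1)$ are MDS-constructible. A strictly monotone diagram is in particular monotone, so $(\mc{F}',\delta)$ is $(\delta-1)$-Singleton and $(\mc{F}',\delta+1)$ is $\delta$-Singleton (cf.\ \cite[Remark 2.26]{bib:neri}). Taking $j=\delta-1$ verifies hypothesis (2) of Theorem~\ref{th:connection2}, and MSRD-constructibility follows.
\end{itemize}

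The initially $m$-convex case follows by adjunction. The adjoint of $\mc{F}$ is strictly $m$-monotone. Adjunction preserves $\nu_{\min}(\cdot,d)$, since it swaps the roles of deleted rows and columns. It also preserves the block-diagonal sizes $|D_i^B\cap\mathbf{B}(\cdot)|$, since reflection across the antidiagonal maps each diagonal to itself. Hence MSRD-constructibility transfers from the adjoint back to $\mc{F}$.

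For the final claim, $\mc{T}_s$ is strictly monotone, so $\mc{T}_{s,m}^B$ is covered for $d\ge2$. For $d=1$, MSRD-constructibility holds for any block diagram, because $m\sum_i m|D_i^B\cap\mathbf{B}|=|\mc{F}|=\nu_{\min}(\mc{F},1)$.

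The main obstacle is the degenerate range $\delta=1$ with $r\ge1$, that is $2\le d\le m$. Theorem~\ref{th:connection2} requires MDS-constructibility of $(\mc{F}',1)$ and $(\mc{F}',2)$ together with the Singleton indices $j=0$ for $\delta=1$ and $j'\in\{0,1\}$ for $\delta+1=2$. Both pairs are MDS-constructible, and monotonicity makes $(\mc{F}',2)$ $1$-Singleton, so $j=0$ works provided $s\ge2$. If $s=1$ then $\delta=s$ and case (1) applies. I would double-check this edge case, and also check that the proof of Theorem~\ref{th:connection2} (which uses Proposition~\ref{prop:mdstriang}, stated for $\delta\ge2$) still yields the triangular-covered condition when $\delta=1$. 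If it does not, I would verify directly that $\mc{F}'\cap\mc{S}_{s,1,0}=\mc{F}'$ is triangular covered for strictly monotone $\mc{F}'$. This holds because strict monotonicity forces $c_{i+1}>c_i$, so the standard version sits inside $\mc{T}_{o(\mc{F}')}$.
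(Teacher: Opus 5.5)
Your proposal is correct and follows essentially the paper's route: you pass to the strictly monotone contraction, get MDS-constructibility from \cite[Proposition 4.15]{bib:neri}, and conclude with Theorem~\ref{th:conndiv} for $r=0$ and Theorem~\ref{th:connection2} for $r\ge 1$. You then treat the initially $m$-convex case by adjunction and check $d=1$ directly for $\mc{T}_{s,m}^B$, as the paper does. You are more explicit than the paper about the edge cases $\delta=1$ and $\delta=s$, and about why adjunction preserves MSRD-constructibility. Your worry about Proposition~\ref{prop:mdstriang} when $\delta=1$ is unfounded, since it is applied to $(\mc{F}',\delta+1)$ with $\delta+1\ge 2$; your direct fallback is valid anyway.
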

\begin{proof}
{
    Assume that $\mc{F}$ is strictly $m$-monotone. 
    Let $\mc F'=\phi_m^{-1}(\mc F)$. By definition, $\mc F'$ is strictly monotone. Moreover, by \cite[Proposition 4.15]{bib:neri}  the pair $(\mc{F}',d')$ is MDS-constructible for any $2\le d'\le s$. Thus, by Theorem \ref{th:connection2}, $(\mc F,d)$ is MSRD-constructible. If $\mc F$ is initially $m$-convex, then we can consider $\mc{F}^{\tran}$, which is strictly $m$-monotone.
    
    Finally, it is immediate to verify that the pair $(\mc{T}_{s,m}^B,d)$ is MSRD-constructible also for $d=1$.}
    \end{proof}

    \begin{remark}\label{rm:monotone_triangular}
        
        {From Corollary \ref{th:monoMSRD1} we have that for a strictly $m$-monotone diagram $\mc{F}$ of order $sm$, for any $d\in[sm]$ there exists an $[\mc{F},\nu_{\min}(\mc{F},d),d]$ MFD code over $\mathbb{F}_q$ with $q>D_{\max}^B$.

        Let us note, that a similar result on the existence of an $[\mc{F},\nu_{\min}(\mc{F},d),d]$ MFD code, over a sufficiently large field, for diagrams as in Corollary \ref{th:monoMSRD1} can be obtained also from \cite[Theorem 5.7]{rakhi2023}. However, using Construction \ref{con:MSRD} the required field size is always less than or equal to the one obtained in  \cite[Theorem 5.7]{rakhi2023}. In particular, if $\mc{F}=[[c_1,\ldots,c_s]]_m$ is a strictly $m$-monotone Ferrers diagram, applying \cite[Theorem 5.7]{rakhi2023} we obtain that an $[\mc{F},\nu_{\min}(\mc{F},d),d]$ MFD code exists, for any $d\in[sm]$, over $\mathbb{F}_q$ with $q>c_s\ge D_{\max}^B$. 

        For example, considering $\mc{F}^m=[[0,1,2,4]]_m$, using Construction \ref{con:MSRD} we need $q>3$, while for applying \cite[Theorem 5.7]{rakhi2023} we need $q>4$.

        {Moreover, although having similar constraints on the field size, the construction provided in \cite{rakhi2023} does not look to be related to our Construction \ref{con:MSRD}, since it does not involve MSRD codes.}
        }
    \end{remark}

%%%%PART MOVED FROM SECTION 5 TO SECTION 4
Now, we introduce some notation to help us consider block diagonals also from the dot perspective.

Recall that, for $i\in[s]$, the set of blocks on the $i$-th diagonal is
$D^B_i=\{Q_{l,s-i+l}\in\mathbf{B}([sm]^2)\,:\,1\leq{l}\leq{i}\}$.  
Then, we define
\begin{equation*}
\Delta_i:={\bigcup_{l=1}^{i}Q_{l,s-i+l}=}\{(t,h)\in Q_{l,s-i+l}\,:\,1\leq{l}\leq{i}\},\quad i\in[s].
\end{equation*}
Informally speaking, the set $\Delta_i$ is the set of dots belonging to a block of $D_i^{B}$.

{  Note that, for an $m$-block Ferrers diagram $\mc{F}$ of order $sm$, since
$$|\Delta_i\cap \mc{F}|=m^2|D_i^B\cap \mathbf{B}(\mc{F})|,$$
we can equivalently write
$$I_B(\mc{F},d)=\{i\in[s]: |\Delta_i\cap \mc{F}|\ge md\}.$$
}

We put here a small example to visually explain the difference between $D_i^B$ and $\Delta_i$.
\begin{example}
Let $\mathcal{F}=[[1,1,2,2,5]]_2$ be a $2$-block Ferrers diagram of order $10$.
\begin{center}
\begin{tikzpicture}[scale=0.4]
\draw[help lines, white, fill=teal!5] (0,1)-- (0,-9)-- (10,-9)--(10,1)--(0,1);
\draw[help lines, thick, blue] (0.1,0.9)-- (0.1,-0.9)-- (1.9,-0.9)--(1.9,0.9)--(0.1,0.9);
\draw[help lines, thick, blue] (8.1,-7.1)-- (8.1,-8.9)-- (9.9,-8.9)--(9.9,-7.1)--(8.1,-7.1);
\draw[help lines, thick, orange] (2.1,0.9)-- (2.1,-0.9)-- (3.9,-0.9)--(3.9,0.9)--(2.1,0.9);
\draw[help lines, thick, orange] (4.1,-1.1)-- (4.1,-2.9)-- (5.9,-2.9)--(5.9,-1.1)--(4.1,-1.1);
\draw[help lines, thick, orange] (8.1,-5.1)-- (8.1,-6.9)-- (9.9,-6.9)--(9.9,-5.1)--(8.1,-5.1);
\draw[help lines, thick, red] (4.1,0.9)-- (4.1,-0.9)-- (5.9,-0.9)--(5.9,0.9)--(4.1,0.9);
\draw[help lines, thick, red] (6.1,-1.1)-- (6.1,-2.9)-- (7.9,-2.9)--(7.9,-1.1)--(6.1,-1.1);
\draw[help lines, thick, red] (8.1,-3.1)-- (8.1,-4.9)-- (9.9,-4.9)--(9.9,-3.1)--(8.1,-3.1);
\draw[help lines, thick, green] (6.1,0.9)-- (6.1,-0.9)-- (7.9,-0.9)--(7.9,0.9)--(6.1,0.9);
\draw[help lines, thick, green] (8.1,-1.1)-- (8.1,-2.9)-- (9.9,-2.9)--(9.9,-1.1)--(8.1,-1.1);
\draw[help lines, thick, lime] (8.1,0.9)-- (8.1,-0.9)-- (9.9,-0.9)--(9.9,0.9)--(8.1,0.9);

\draw[help lines, dashed] (2,1) -- (2,-9);
\draw[help lines, dashed] (0,-1) -- (10,-1);
\draw[help lines, dashed] (4,1) -- (4,-9);
\draw[help lines, dashed] (0,-3) -- (10,-3);
\draw[help lines, dashed] (6,1) -- (6,-9);
\draw[help lines, dashed] (0,-5) -- (10,-5);
\draw[help lines, dashed] (8,1) -- (8,-9);
\draw[help lines, dashed] (0,-7) -- (10,-7);

 %prima riga
\draw[fill=black] (0.5,0.5) circle (0.1cm);
\draw[fill=black] (1.5,0.5) circle (0.1cm);
\draw[fill=black] (2.5,0.5) circle (0.1cm);
\draw[fill=black] (3.5,0.5) circle (0.1cm);
\draw[fill=black] (4.5,0.5) circle (0.1cm);
\draw[fill=black] (5.5,0.5) circle (0.1cm);
\draw[fill=black] (6.5,0.5) circle (0.1cm);
\draw[fill=black] (7.5,0.5) circle (0.1cm);
\draw[fill=black] (8.5,0.5) circle (0.1cm);
\draw[fill=black] (9.5,0.5) circle (0.1cm);

%seconda riga
\draw[fill=black] (0.5,-0.5) circle (0.1cm);
\draw[fill=black] (1.5,-0.5) circle (0.1cm);
\draw[fill=black] (2.5,-0.5) circle (0.1cm);
\draw[fill=black] (3.5,-0.5) circle (0.1cm);
\draw[fill=black] (4.5,-0.5) circle (0.1cm);
\draw[fill=black] (5.5,-0.5) circle (0.1cm);
\draw[fill=black] (6.5,-0.5) circle (0.1cm);
\draw[fill=black] (7.5,-0.5) circle (0.1cm);
\draw[fill=black] (8.5,-0.5) circle (0.1cm);
\draw[fill=black] (9.5,-0.5) circle (0.1cm)node[above,xshift=15pt]{D$^B_1$};

%terza riga
\draw[fill=black] (4.5,-1.5) circle (0.1cm);
\draw[fill=black] (5.5,-1.5) circle (0.1cm);
\draw[fill=black] (6.5,-1.5) circle (0.1cm);
\draw[fill=black] (7.5,-1.5) circle (0.1cm);
\draw[fill=black] (8.5,-1.5) circle (0.1cm);
\draw[fill=black] (9.5,-1.5) circle (0.1cm);

%quarta riga
\draw[fill=black] (4.5,-2.5) circle (0.1cm);
\draw[fill=black] (5.5,-2.5) circle (0.1cm);
\draw[fill=black] (6.5,-2.5) circle (0.1cm);
\draw[fill=black] (7.5,-2.5) circle (0.1cm);
\draw[fill=black] (8.5,-2.5) circle (0.1cm);
\draw[fill=black] (9.5,-2.5) circle (0.1cm)node[above,xshift=15pt]{D$^B_2$};

\draw[fill=black] (8.5,-3.5) circle (0.1cm);
\draw[fill=black] (9.5,-3.5) circle (0.1cm);

\draw[fill=black] (8.5,-4.5) circle (0.1cm);
\draw[fill=black] (9.5,-4.5) circle (0.1cm)node[above,xshift=15pt]{D$^B_3$};

\draw[fill=black] (8.5,-5.5) circle (0.1cm);
\draw[fill=black] (9.5,-5.5) circle (0.1cm);

\draw[fill=black] (8.5,-6.5) circle (0.1cm);
\draw[fill=black] (9.5,-6.5) circle (0.1cm)node[above,xshift=15pt]{D$^B_4$};

\draw[fill=black] (8.5,-7.5) circle (0.1cm);
\draw[fill=black] (9.5,-7.5) circle (0.1cm);

\draw[fill=black] (8.5,-8.5) circle (0.1cm);
\draw[fill=black] (9.5,-8.5) circle (0.1cm)node[above,xshift=15pt]{D$^B_5$};
\end{tikzpicture}
\begin{tikzpicture}[scale=0.4]
\draw[help lines, white, fill=teal!5] (0,1)-- (0,-9)-- (10,-9)--(10,1)--(0,1);
% \draw[help lines, thick, blue] (0.1,0.9)-- (0.1,-0.9)-- (1.9,-0.9)--(1.9,0.9)--(0.1,0.9);
% \draw[help lines, thick, blue] (8.1,-7.1)-- (8.1,-8.9)-- (9.9,-8.9)--(9.9,-7.1)--(8.1,-7.1);
% \draw[help lines, thick, orange] (2.1,0.9)-- (2.1,-0.9)-- (3.9,-0.9)--(3.9,0.9)--(2.1,0.9);
% \draw[help lines, thick, orange] (4.1,-1.1)-- (4.1,-2.9)-- (5.9,-2.9)--(5.9,-1.1)--(4.1,-1.1);
% \draw[help lines, thick, orange] (8.1,-5.1)-- (8.1,-6.9)-- (9.9,-6.9)--(9.9,-5.1)--(8.1,-5.1);
% \draw[help lines, thick, red] (4.1,0.9)-- (4.1,-0.9)-- (5.9,-0.9)--(5.9,0.9)--(4.1,0.9);
% \draw[help lines, thick, red] (6.1,-1.1)-- (6.1,-2.9)-- (7.9,-2.9)--(7.9,-1.1)--(6.1,-1.1);
% \draw[help lines, thick, red] (8.1,-3.1)-- (8.1,-4.9)-- (9.9,-4.9)--(9.9,-3.1)--(8.1,-3.1);
% \draw[help lines, thick, green] (6.1,0.9)-- (6.1,-0.9)-- (7.9,-0.9)--(7.9,0.9)--(6.1,0.9);
% \draw[help lines, thick, green] (8.1,-1.1)-- (8.1,-2.9)-- (9.9,-2.9)--(9.9,-1.1)--(8.1,-1.1);
% \draw[help lines, thick, lime] (8.1,0.9)-- (8.1,-0.9)-- (9.9,-0.9)--(9.9,0.9)--(8.1,0.9);

\draw[help lines, dashed] (2,1) -- (2,-9);
\draw[help lines, dashed] (0,-1) -- (10,-1);
\draw[help lines, dashed] (4,1) -- (4,-9);
\draw[help lines, dashed] (0,-3) -- (10,-3);
\draw[help lines, dashed] (6,1) -- (6,-9);
\draw[help lines, dashed] (0,-5) -- (10,-5);
\draw[help lines, dashed] (8,1) -- (8,-9);
\draw[help lines, dashed] (0,-7) -- (10,-7);

 %prima riga
\draw[blue,fill=blue] (0.5,0.5) circle (0.1cm);
\draw[blue,fill=blue] (1.5,0.5) circle (0.1cm);
\draw[orange,fill=orange] (2.5,0.5) circle (0.1cm);
\draw[orange,fill=orange] (3.5,0.5) circle (0.1cm);
\draw[red,fill=red] (4.5,0.5) circle (0.1cm);
\draw[red,fill=red] (5.5,0.5) circle (0.1cm);
\draw[green,fill=green] (6.5,0.5) circle (0.1cm);
\draw[green,fill=green] (7.5,0.5) circle (0.1cm);
\draw[lime,fill=lime] (8.5,0.5) circle (0.1cm);
\draw[lime,fill=lime] (9.5,0.5) circle (0.1cm);

%seconda riga
\draw[blue,fill=blue] (0.5,-0.5) circle (0.1cm);
\draw[blue,fill=blue] (1.5,-0.5) circle (0.1cm);
\draw[orange,fill=orange] (2.5,-0.5) circle (0.1cm);
\draw[orange,fill=orange] (3.5,-0.5) circle (0.1cm);
\draw[red,fill=red] (4.5,-0.5) circle (0.1cm);
\draw[red,fill=red] (5.5,-0.5) circle (0.1cm);
\draw[green,fill=green] (6.5,-0.5) circle (0.1cm);
\draw[green,fill=green] (7.5,-0.5) circle (0.1cm);
\draw[lime,fill=lime] (8.5,-0.5) circle (0.1cm);
\draw[lime,fill=lime] (9.5,-0.5) circle (0.1cm)node[black,above,xshift=15pt]{$\Delta_1$};

%terza riga
\draw[orange,fill=orange] (4.5,-1.5) circle (0.1cm);
\draw[orange,fill=orange] (5.5,-1.5) circle (0.1cm);
\draw[red,fill=red] (6.5,-1.5) circle (0.1cm);
\draw[red,fill=red] (7.5,-1.5) circle (0.1cm);
\draw[green,fill=green] (8.5,-1.5) circle (0.1cm);
\draw[green,fill=green] (9.5,-1.5) circle (0.1cm);

%quarta riga
\draw[orange,fill=orange] (4.5,-2.5) circle (0.1cm);
\draw[orange,fill=orange] (5.5,-2.5) circle (0.1cm);
\draw[red,fill=red] (6.5,-2.5) circle (0.1cm);
\draw[red,fill=red] (7.5,-2.5) circle (0.1cm);
\draw[green,fill=green] (8.5,-2.5) circle (0.1cm);
\draw[green,fill=green] (9.5,-2.5) circle (0.1cm)node[black,above,xshift=15pt]{$\Delta_2$};

\draw[red,fill=red] (8.5,-3.5) circle (0.1cm);
\draw[red,fill=red] (9.5,-3.5) circle (0.1cm);

\draw[red,fill=red] (8.5,-4.5) circle (0.1cm);
\draw[red,fill=red] (9.5,-4.5) circle (0.1cm)node[black,above,xshift=15pt]{$\Delta_3$};

\draw[orange,fill=orange] (8.5,-5.5) circle (0.1cm);
\draw[orange,fill=orange] (9.5,-5.5) circle (0.1cm);

\draw[orange,fill=orange] (8.5,-6.5) circle (0.1cm);
\draw[orange,fill=orange] (9.5,-6.5) circle (0.1cm)node[black,above,xshift=15pt]{$\Delta_4$};

\draw[blue,fill=blue] (8.5,-7.5) circle (0.1cm);
\draw[blue,fill=blue] (9.5,-7.5) circle (0.1cm);

\draw[blue,fill=blue] (8.5,-8.5) circle (0.1cm);
\draw[blue,fill=blue] (9.5,-8.5) circle (0.1cm)node[black,above,xshift=15pt]{$\Delta_5$};
\end{tikzpicture} 
\end{center}
\end{example}

The following lemma refers explicitly to the block diagonal structure of the considered diagram. Thus, we adapt the proof of Lemma \ref{lem:puntidots} to block Ferrers diagrams.
\begin{lemma}\label{lem:punti}
Let $\mc{F}$ be an $m$-block Ferrers diagram of order $sm$ and let $d\in\{2,\dots,sm\}$, 
{ and let $\delta \in [s]$, $r\in \{0,\ldots,m-1\}$ such that
$d-1=m(\delta-1)+r$. Let $j\in\{km,km+r \,:\, k\in\{0,\ldots,\delta-1\}\}$ be such that $(\mc{F},d)$ is
$j$-Singleton.  Then, $(\mc{F},d)$ is MSRD-constructible if and only if the
following hold.}
\begin{enumerate}
    \item \label{lem:sin1} $\mc{F}\cap\mc{S}_{sm,d,j}=\mc{F}\cap\mc{T}_{s,m,d,j}^B$, where $\mc{T}_{s,m,d,j}^B=\mc{T}_{s,m}^B\cap\mc{S}_{sm,d,j}$.
    \item \label{lem:sin2}  
   $ I_B(\mc{F},d) = \left\{
i\in\left\{\left\lceil\frac dm\right\rceil,\ldots,s\right\}:
\Delta_i\cap \mc{F}\cap \mc{S}_{sm,d,j}\neq\emptyset
\right\}$.
    \item \label{lem:sin3} If $i\in\left\{\lceil\frac{d}{m}\rceil,\dots,s\right\}$ and $\Delta_i\cap\mc{F}\cap\mc{S}_{sm,d,j}\neq\emptyset$, then $\Delta_i\cap\mc{F}\supseteq\Delta_i\cap\mc{L}_{sm,d,j}$.
\end{enumerate}
\end{lemma}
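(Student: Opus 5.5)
The proof will be a double-counting argument comparing $\nu_j(\mc{F},d)=|\mc{F}\cap\mc{S}_{sm,d,j}|$ with the MSRD sum, following the proof of Lemma~\ref{lem:puntidots} but with $\Delta_i$ in place of $D_i$. For each $i\in[s]$ set $h_i:=|D_i^B\cap\mathbf{B}(\mc{F})|$, so that $|\Delta_i\cap\mc{F}|=m^2h_i$ and $\Delta_i\cap\mc{F}$ is a union of $h_i$ full blocks. The first step is a \emph{local bound}: for every $i$,
\begin{equation*}
|\Delta_i\cap\mc{F}\cap\mc{S}_{sm,d,j}|\ \ge\ m\max\{mh_i-d+1,0\}.
\end{equation*}
Indeed $\mc{L}_{sm,d,j}$ consists of $d-1-j$ rows and $j$ columns, and each such row or column meets the $m$-row strip of $\Delta_i$ in at most $m$ dots (it crosses at most one block of the block diagonal). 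Hence at most $m(d-1)$ dots of $\Delta_i$ are removed, and the bound follows. Summing over $i$ gives
\begin{equation*}
\nu_{\min}(\mc{F},d)=\nu_j(\mc{F},d)=\sum_{i=1}^s|\Delta_i\cap\mc{F}\cap\mc{S}_{sm,d,j}|\ \ge\ m\sum_{i=1}^s\max\{mh_i-d+1,0\}.
\end{equation*}
So MSRD-constructibility is equivalent to equality in the local bound for every $i$. (This reproves the inequality already implied by Lemma~\ref{le:dimMSRD} and Theorem~\ref{th:dimension}, but in a termwise form.)

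The second step characterises, for a fixed $i$, when equality holds in the local bound. There are two cases.
\begin{itemize}
\item If $mh_i<d$ (that is, $i\notin I_B(\mc{F},d)$), equality means $\Delta_i\cap\mc{F}\cap\mc{S}_{sm,d,j}=\emptyset$.
\item If $mh_i\ge d$, then $h_i\ge\lceil d/m\rceil$, which forces $i\ge\lceil d/m\rceil$. Equality then means exactly $m(d-1)$ dots of $\Delta_i\cap\mc{F}$ lie in $\mc{L}_{sm,d,j}$. This requires every one of the $d-1$ deleted lines to meet $\Delta_i\cap\mc{F}$ in a full segment of $m$ dots. Since $\Delta_i\cap\mc{L}_{sm,d,j}$ is a union of such segments, this is equivalent to $\Delta_i\cap\mc{F}\supseteq\Delta_i\cap\mc{L}_{sm,d,j}$, which is condition~(3).
\end{itemize}
Together the two cases give (2) and (3). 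The forward direction of (2) uses that $i\in I_B$ yields $|\Delta_i\cap\mc{F}\cap\mc{S}|\ge m(mh_i-d+1)>0$. The reverse direction uses that if $i\notin I_B$, the intersection must be empty.

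The third step relates (1) to (2) and (3). For the forward direction, suppose a block $Q_{a,b}\in\mathbf{B}(\mc{F})$ with $a>b$ meets $\mc{S}_{sm,d,j}$, and let it lie on block diagonal $i$. By (2), $i\in I_B$, so by (3) the whole of $\Delta_i\cap\mc{L}_{sm,d,j}$ is in $\mc{F}$. Since $a>b$, the diagonal $\Delta_i$ wraps below the main block diagonal. I then want to show that the deleted top rows and right columns along this diagonal force blocks of $\mc{F}$ whose count makes $h_i$ too large: $h_i\le s$ is incompatible with the equality count, as in the Lemma~\ref{lem:puntidots} argument, where a subdiagonal dot forces the diagonal to be longer than possible. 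Conversely, to obtain (2) and (3) from (1), I use that (1) makes every relevant $\Delta_i\cap\mc{S}$ lie in the block-triangular region, so the diagonal structure is that of $\mc{T}^B_{s,m}$. For the reverse implication of the whole lemma, assume (1)–(3) and compute: by (2) the nonempty pieces $\Delta_i\cap\mc{F}\cap\mc{S}$ occur exactly for $i\in I_B$. By (3) each such $\Delta_i$ loses exactly $m(d-1)$ dots (using (1) to guarantee each deleted line crosses $\Delta_i$ inside $\mc{F}$ in a full length-$m$ segment). Summing gives equality, hence MSRD-constructibility.

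The main obstacle is the second and third steps when $j=km+r$ with $r\neq0$, where a deleted line boundary cuts through the middle of a block. There one must check carefully that each deleted row or column meets $\Delta_i$ in exactly one block-segment of length $m$ and never straddles two blocks of the same block diagonal. One must also check that the block-triangular shape in (1) is exactly what makes the partial-block deletions still count $m$ per line. I would first verify this on $\mc{T}^B_{s,m}$ using Proposition~\ref{prop:jsingtriang}, then transfer it.
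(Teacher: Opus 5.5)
There is a genuine gap, and it is in your first step. The identity
\[
\nu_j(\mc{F},d)=\sum_{i=1}^s|\Delta_i\cap\mc{F}\cap\mc{S}_{sm,d,j}|
\]
is false in general. By definition $\Delta_i=\bigcup_{l=1}^{i}Q_{l,s-i+l}$, so the $\Delta_i$ partition only the block upper-triangular region $\mc{T}^B_{s,m}$, not $[sm]^2$. There is no ``wrapping'': a block $Q_{a,b}$ with $a>b$ lies on no $\Delta_i$. What is true is
\[
\nu_j(\mc{F},d)=|\mc{F}\cap\mc{S}_{sm,d,j}|\ge|\mc{F}\cap\mc{T}^B_{s,m,d,j}|=\sum_i|\Delta_i\cap\mc{F}\cap\mc{S}_{sm,d,j}|.
\]
So your claim that MSRD-constructibility is equivalent to termwise equality in the local bound is wrong. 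A concrete counterexample: $m=s=2$, $\mc{F}=[4]^2$, $d=2$ (so $\delta=1$, $r=1$), $j=0$.
\begin{itemize}
\item The local bounds are attained: $2$ dots on $\Delta_1$ and $6$ on $\Delta_2$.
\item Conditions (2) and (3) hold.
\item Yet $\nu_{\min}(\mc{F},2)=12$ while the MSRD sum is $8$. The missing $4$ dots are those of $Q_{2,1}$, and condition (1) fails.
\end{itemize}
Hence (1) is genuinely independent of (2) and (3). Your third step, which tries to extract (1) from (2) and (3) via a diagonal ``wrapping below the main block diagonal'', cannot be completed.

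The repair is the paper's argument. Consider the chain
\[
\nu_{\min}(\mc{F},d)=|\mc{F}\cap\mc{S}_{sm,d,j}|\ge|\mc{F}\cap\mc{T}^B_{s,m,d,j}|=\sum_{i=1}^s|\Delta_i\cap\mc{F}\cap\mc{S}_{sm,d,j}|\ge m\sum_{i=1}^s\max\{mh_i-d+1,0\}.
\]
MSRD-constructibility means equality throughout. Equality in the first inequality is exactly (1), and your step-2 case analysis of termwise equality then gives (2) and (3).

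In the converse, the role of (1) is to replace $|\mc{F}\cap\mc{S}_{sm,d,j}|$ by the sum over the $\Delta_i$. It does not guarantee full length-$m$ segments inside $\mc{F}$; that is the role of (3).

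The checks you flag as the main obstacle are routine, but should be written out. Take $i\ge\lceil d/m\rceil=\delta$.
\begin{itemize}
\item If $j=km$, the deleted rows lie in block rows $\le\delta-k$ and the deleted columns in block columns $\ge s-k+1$.
\item If $j=km+r$, they lie in block rows $\le\delta-1-k$ and block columns $\ge s-k$.
\end{itemize}
In either case each deleted line meets $\Delta_i$ in exactly $m$ dots, and no block $Q_{a,s-i+a}$ of $\Delta_i$ is met by both a deleted row and a deleted column. Hence $|\Delta_i\cap\mc{L}_{sm,d,j}|=m(d-1)$ exactly. The same bookkeeping shows $\Delta_i\cap\mc{S}_{sm,d,j}=\emptyset$ for $i<\delta$. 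You need this in the converse, since (2) says nothing about those indices.
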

\begin{proof}
{Assume that $(\mc{F},d)$ is MSRD-constructible. Set
$I_B:=I_B(\mc{F},d).$
Equivalently,
$$I_B=
\left\{i\in[s]:|\Delta_i\cap \mc{F}|\ge md\right\}.$$
{
We first observe that, for every
$i\in\left\{\left\lceil\frac dm\right\rceil,\ldots,s\right\}$,
one has
\[
|\Delta_i\cap \mc{L}_{sm,d,j}|=m(d-1).
\]
Indeed, write $d-1=m(\delta-1)+r$.
If $j=km$, then $\mc{L}_{sm,d,j}$ removes $k$ full block columns,
$\delta-1-k$ full block rows, and $r$ additional rows in the next
block row. Hence, on each block diagonal $\Delta_i$ with
$i\ge \lceil d/m\rceil$, this removes $(\delta-1)m^2+rm=m(d-1)$
dots. If $j=km+r$, then $\mc{L}_{sm,d,j}$ removes $k$ full block columns, $\delta-1-k$ full block rows, and $r$ additional columns in the next
block column. Again, on each such $\Delta_i$, this removes
$(\delta-1)m^2+rm=m(d-1)$
dots.}

Moreover, if $i<\left\lceil d/m\right\rceil$, then
$|\Delta_i\cap \mc{F}|\leq |\Delta_i|=im^2<md$, so no such $i$ belongs to $I_B$. Hence
$I_B\subseteq \left\{\left\lceil d/m\right\rceil,\ldots,s\right\}.$
Now, set
$$Y:=\left\{ i\in\left\{\left\lceil d/m\right\rceil,\ldots,s\right\} \,:\,
\Delta_i\cap \mc{F}\cap \mc{S}_{sm,d,j}\neq\emptyset
\right\}.$$
We claim that $I_B\subseteq Y$.}
{Indeed, let $i\in I_B$. Observe that
$\Delta_i=(\Delta_i\cap \mc{S}_{sm,d,j})\cup(\Delta_i\cap \mc{L}_{sm,d,j})$,
and hence
$$|\Delta_i\cap \mc{S}_{sm,d,j}|=|\Delta_i|-|\Delta_i\cap \mc{L}_{sm,d,j}|=
im^2-m(d-1).$$
If $\Delta_i\cap \mc{F}\cap \mc{S}_{sm,d,j}=\emptyset$, then
\begin{align*}
|\Delta_i\cap \mc{F}\cup(\Delta_i\cap \mc{S}_{sm,d,j})|
&= |\Delta_i\cap \mc{F}|+|\Delta_i\cap \mc{S}_{sm,d,j}| \\
&\ge md+im^2-m(d-1) \\
&=im^2+m \\
&>|\Delta_i|,
\end{align*}
a contradiction. Therefore
$\Delta_i\cap \mc{F}\cap \mc{S}_{sm,d,j}\neq\emptyset$,
and so $i\in Y$. Thus $I_B\subseteq Y$.

To prove \ref{lem:sin1}, observe that, since $(\mathcal{F},d)$ is $j$-Singleton, by definition
\begin{equation*}
\num{F}{d}=|\mc{F}\cap\set{S}|.
\end{equation*}
}

{
Moreover, since $(\mc{F},d)$ is MSRD-constructible and
$|\Delta_i\cap \mc{F}|=m^2|D_i^B\cap \mathbf{B}(\mc{F})|$,
we have
\begin{align*}
\nu_{\min}(\mc{F},d)
&=
m\sum_{i=1}^s
\max\{m|D_i^B\cap \mathbf{B}(\mc{F})|-(d-1),0\} \\
&=
\sum_{i\in I_B}(|\Delta_i\cap \mc{F}|-m(d-1)) =
\sum_{i\in I_B}(|\Delta_i\cap \mc{F}|-|\Delta_i\cap \mc{L}_{sm,d,j}|) \\
&\le
\sum_{i\in I_B}(|\Delta_i\cap \mc{F}|-|\Delta_i\cap \mc{F}\cap \mc{L}_{sm,d,j}|) =
\sum_{i\in I_B}|\Delta_i\cap \mc{F}\cap \mc{S}_{sm,d,j}| \\
&\le
\sum_{i\in Y}|\Delta_i\cap \mc{F}\cap \mc{S}_{sm,d,j}| =
|\mc{F}\cap \mc{T}^B_{s,m,d,j}|.
\end{align*}
Hence, $|\mc{F}\cap\set{S}|\leq|\mc{F}\cap\mc{T}_{s,m,d,j}^B|$ and, clearly, $\mc{F}\cap\mc{T}_{s,m,d,j}^B\subseteq\mc{F}\cap\set{S}$, implying $\mc{F}\cap\set{S}=\mc{F}\cap\mc{T}_{s,m,d,j}^B$.

}

{
To prove \ref{lem:sin2} and \ref{lem:sin3}, observe that all inequalities above must be equalities.
In particular, since $I_B\subseteq Y$, equality in
$$\sum_{i\in I_B}|\Delta_i\cap \mc{F}\cap \mc{S}_{sm,d,j}| \leq \sum_{i\in Y}|\Delta_i\cap \mc{F}\cap \mc{S}_{sm,d,j}|$$
implies $I_B=Y$, because every summand indexed by $Y$ is positive by definition of $Y$.
This proves \ref{lem:sin2}.
Moreover, equality in
$$\sum_{i\in I_B}(|\Delta_i\cap \mc{F}|-|\Delta_i\cap \mc{L}_{sm,d,j}|) \leq \sum_{i\in I_B}(|\Delta_i\cap \mc{F}|-|\Delta_i\cap \mc{F}\cap \mc{L}_{sm,d,j}|)$$
implies that, for every $i\in I_B=Y$,
$\Delta_i\cap \mc{L}_{sm,d,j}=\Delta_i\cap \mc{F}\cap \mc{L}_{sm,d,j}$.
Hence
$\Delta_i\cap \mc{L}_{sm,d,j}\subseteq \Delta_i\cap \mc{F}$, or, equivalently,
$$\Delta_i\cap \mc{F}\supseteq \Delta_i\cap \mc{L}_{sm,d,j},$$
which proves \ref{lem:sin3}.

Conversely, assume that the three conditions hold. By condition \ref{lem:sin2}, we have
$$I_B=\left\{ i\in\left\{\left\lceil\frac{d}{m}\right\rceil,\ldots,s\right\}:
\Delta_i\cap \mc{F}\cap \mc{S}_{sm,d,j}\neq\emptyset\right\}.$$
Using condition \ref{lem:sin1}, we get
\begin{align*}
\nu_{\min}(\mc{F},d)
&=|\mc{F}\cap \mc{S}_{sm,d,j}| =|\mc{F}\cap \mc{T}^B_{s,m,d,j}| = \sum_{i\in I_B}|\Delta_i\cap \mc{F}\cap \mc{S}_{sm,d,j}|.
\end{align*}
By condition \ref{lem:sin3}, for every $i\in I_B$,
$\Delta_i\cap \mc{L}_{sm,d,j}\subseteq \Delta_i\cap \mc{F},$
and hence
$$
|\Delta_i\cap \mc{F}\cap \mc{S}_{sm,d,j}| =|\Delta_i\cap \mc{F}|-|\Delta_i\cap \mc{L}_{sm,d,j}|.$$
Therefore,
\begin{align*}
\nu_{\min}(\mc{F},d)
&=
\sum_{i\in I_B}
\left(|\Delta_i\cap \mc{F}|-|\Delta_i\cap \mc{L}_{sm,d,j}|\right) \\
&= \sum_{i\in I_B}
\left(|\Delta_i\cap \mc{F}|-m(d-1)\right) \\
&= m\sum_{i=1}^s
\max\{m|D_i^B\cap \mathbf{B}(\mc{F})|-d+1,0\}.
\end{align*}
Thus, $(\mc{F},d)$ is MSRD-constructible.}
\end{proof}

%%END OF PART MOVED

{We can now reverse Theorem \ref{th:connection2} as follows.
\begin{theorem}
{
Let $\mathcal{F}$ be a Ferrers diagram of order $s$ and $\mc{F}^m$ be its $m$-block version. Let $\delta\in[s], d\in[sm]$  such that $d-1=m(\delta-1)+r$, with $r\in[m-1]$. Suppose that $(\mathcal{F}^m,d)$ is MSRD-constructible. The following statements hold.
    \begin{enumerate}
        \item If $\delta=s$, then $(\mathcal{F}^m,m(\delta-1)+r'+1)$ is MSRD-constructible for any $r'\in\{0,\ldots,m-1\}$, and, in particular,
$(\mathcal{F},\delta)$ MDS-constructible.
        \item If $\delta \leq s-1$ and there exists some $j\in\{\,0,\ldots,\delta-1\,\}$ such that 
  $(\mathcal{F},\delta)$ is $j$-Singleton and $(\mc{F},\delta+1)$ is $j$-Singleton or $(j+1)$-Singleton, then $(\mathcal{F}^m,m(\delta-1)+r'+1)$ is MSRD-constructible for any $r'\in\{0,\ldots,m\}$, and, in particular,
$(\mathcal{F},\delta)$ and $(\mathcal{F},\delta+1)$ are MDS-constructible.

    \end{enumerate} }
\end{theorem}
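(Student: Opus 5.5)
The idea is to regard both sides of the MSRD-constructibility identity as functions of the residue, and to show that their difference is affine in that residue. For $r'\in\{0,\ldots,m\}$ put $d_{r'}:=m(\delta-1)+r'+1$, so that $d_r=d$. Define
\[
f(r'):=\nu_{\min}(\mc{F}^m,d_{r'}),\qquad g(r'):=m\sum_{i=1}^s\max\{m|D_i^B\cap\mathbf{B}(\mc{F}^m)|-d_{r'}+1,0\}.
\]
Two facts will be used. First, $g(r')\le f(r')$ for every admissible $r'$: by Lemma~\ref{le:dimMSRD}, $g(r')$ is the dimension of the code of Construction~\ref{con:MSRD} over a large enough field, and Theorem~\ref{th:dimension} bounds that dimension by $f(r')$. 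Second, the hypothesis says $f(r)=g(r)$ with $r\in[m-1]$ an interior point. If $f-g$ is affine on the relevant range, it is nonnegative at both endpoints and zero at an interior point, so it vanishes identically.

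\emph{Affinity of $g$.} Recall $|D_i^B\cap\mathbf{B}(\mc{F}^m)|=|D_i\cap\mc{F}|$ and set $I:=I(\mc{F},\delta)$.
\begin{itemize}
\item For $r'\in\{0,\ldots,m-1\}$, the block diagonals contributing are exactly those indexed by $I$. Hence $g(r')=m^2\sum_{i\in I}(|D_i\cap\mc{F}|-\delta+1)-mr'|I|$.
\item For $r'=m$, the diagonals with $|D_i\cap\mc{F}|=\delta$ contribute $0$ in either form, so the same formula still holds.
\end{itemize}
Thus $g$ is affine on $\{0,\ldots,m\}$.

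\emph{Affinity of $f$ in case (2).} Here I use the fixed $j$ of the hypothesis.
\begin{itemize}
\item For $r'\in[m-1]$, Lemma~\ref{le:jsing} gives $f(r')=m^2\nu_j(\mc{F},\delta)-mr'k$, where $k=\nu_j(\mc{F},\delta)-\nu_{j'}(\mc{F},\delta+1)$ and $j'\in\{j,j+1\}$ is the Singleton index of $(\mc{F},\delta+1)$.
\item At $r'=0$, Lemma~\ref{le:jsingm} gives $f(0)=m^2\nu_{\min}(\mc{F},\delta)=m^2\nu_j(\mc{F},\delta)$.
\item At $r'=m$, we have $d_m-1=m\delta$, so Lemma~\ref{le:jsingm} applied with $\delta+1$ gives $f(m)=m^2\nu_{j'}(\mc{F},\delta+1)=m^2\nu_j(\mc{F},\delta)-m\cdot m\,k$.
\end{itemize}
So the same affine formula holds on all of $\{0,\ldots,m\}$, and therefore $f\equiv g$ there, which is the MSRD-constructibility claim. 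Reading it off at the endpoints: $r'=0$ gives $d_0-1=m(\delta-1)$, and Theorem~\ref{th:conndiv} yields that $(\mc{F},\delta)$ is MDS-constructible. At $r'=m$ we have $d_m-1=m\delta$, and Theorem~\ref{th:conndiv} applied to $\delta+1$ yields that $(\mc{F},\delta+1)$ is MDS-constructible.

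\emph{Case (1), $\delta=s$.} Only $r'\in\{0,\ldots,m-1\}$ is available.
\begin{itemize}
\item As in the proofs of Proposition~\ref{prop:jsingtriang} and Theorem~\ref{th:connection}, Lemma~\ref{le:nublock} shows that each admissible deletion leaves at most the single block $Q_{s-k,s-k}$, cut by $r'$ rows or columns.
\item Hence $f(r')=(m-r')\,m\min_k[\,(s-k,s-k)\in\mc{F}\,]=(m-r')\,m\,\nu_{\min}(\mc{F},s)$.
\item Similarly $g(r')=(m-r')\,m\,[\,D_s\cap\mc{F}=D_s\,]$.
\end{itemize}
Thus $f-g=(m-r')\,m\,(a-b)$ with constants $a,b$. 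Vanishing at $r\le m-1$ forces $a=b$, so $f\equiv g$ on $\{0,\ldots,m-1\}$. Then $r'=0$ and Theorem~\ref{th:conndiv} give the MDS-constructibility of $(\mc{F},s)$.

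The main obstacle is the affinity of $f$: I must check carefully that the same Singleton index governs all $r'$, including the endpoints $r'=0,m$, which rely on the divisible-case Lemma~\ref{le:jsingm} rather than Lemma~\ref{le:jsing}. I must also check that the constant $k$ matches in the $j'=j$ and $j'=j+1$ subcases.
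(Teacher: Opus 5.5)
Your argument for case (2) is correct, and it takes a different route from the paper.

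\textbf{How case (2) differs from the paper.} The paper works structurally. It applies the characterization of Lemma~\ref{lem:punti} at distance $d$, then checks that its three conditions persist when $r$ is replaced by any $r'\in\{0,\ldots,m\}$. You instead argue numerically:
\begin{enumerate}
\item On all of $\{0,\ldots,m\}$ you show $f(r')=m^2\nu_{\min}(\mathcal{F},\delta)-mr'\bigl(\nu_{\min}(\mathcal{F},\delta)-\nu_{\min}(\mathcal{F},\delta+1)\bigr)$, using Lemma~\ref{le:jsingm} at the two endpoints and Lemma~\ref{le:jsing} in between.
\item You note that $g$ is also affine on $\{0,\ldots,m\}$.
\item You conclude that an affine $f-g$ which is nonnegative at $0$ and $m$ (by Construction~\ref{con:MSRD} and Theorem~\ref{th:dimension}) and vanishes at the interior point $r$ must vanish identically.
\end{enumerate}
This is shorter and avoids re-verifying the conditions of Lemma~\ref{lem:punti}. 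The consistency check you flag is automatic: $k=\nu_{\min}(\mathcal{F},\delta)-\nu_{\min}(\mathcal{F},\delta+1)$ whether $j'=j$ or $j'=j+1$.

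\textbf{Case (1) contains a false step.} Consider deleting the last $km$ columns and the first $m(s-1-k)+r'$ rows, or the last $km+r'$ columns and the first $m(s-1-k)$ rows. Either deletion leaves the block rectangle with block rows $s-k,\ldots,s$ and block columns $1,\ldots,s-k$. That rectangle is guaranteed to contain at most the block $Q_{s-k,s-k}$ of $\mathcal{F}^m$ only when the contraction lies in $\mathcal{T}_s$, which is the situation of Proposition~\ref{prop:jsingtriang}. In Theorem~\ref{th:connection} the analogous fact was deduced from the hypotheses, and only for one particular index.
\begin{itemize}
\item For $\mathcal{F}=[s]^2$ and $k=0$, the whole last block row survives. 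Indeed $f(r')=sm(m-r')$, while $\min_k[\,(s-k,s-k)\in\mathcal{F}\,]=1$.
\item In general, $f(r')$ is a minimum over $k$ of quantities $m(m-r')A+m^2B$, where $A$ counts blocks cut by $r'$ lines and $B$ counts untouched blocks. This need not be proportional to $m-r'$.
\item Even your final formula $f(r')=(m-r')m\,\nu_{\min}(\mathcal{F},s)$ fails. For $\mathcal{F}=[4,4,4,5,6,9,9,9,9]$ one has $\nu_{\min}(\mathcal{F},9)=3$, but $f(r')=\min\{4m(m-r'),\,2m(m-r')+m^2\}>3m(m-r')$ for $r'\ge 1$.
\end{itemize}
So the identity $f-g=(m-r')m(a-b)$ is unproved.

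\textbf{How to repair it.} The hypothesis $f(r)=g(r)$ must be used to control the shape of a minimizing deletion, not just to match constants.
\begin{enumerate}
\item At $r'=r$, some deletion of the above form realizes $f(r)=g(r)=m(m-r)\epsilon$, where $\epsilon=[\,D_s\subseteq\mathcal{F}\,]$. So $m(m-r)A+m^2B=m(m-r)\epsilon$.
\item Since $0<m-r<m$, this forces $B=0$ and $A=\epsilon$.
\item The same deletion pattern at distance $m(s-1)+r'+1$ leaves exactly $m(m-r')\epsilon=g(r')$ dots, hence $f(r')\le g(r')$.
\item Together with $g\le f$, this gives equality for all $r'\in\{0,\ldots,m-1\}$.
\end{enumerate}
This is essentially the paper's argument for case (1).
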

\begin{proof}
Let us consider (1). So, $d=(s-1)m+r+1$ for some $r \in [m-1]$. From Lemma \ref{le:nublock}, there exists $j\in \{0,\ldots,s-1\}$ such that 
$\nu_{\min}(\mc{F}^m,d)=\min\{\nu_{mj}(\mc{F}^m,d),\nu_{mj+r}(\mc{F}^m,d)\}$.

Moreover,
$$
\begin{aligned}
\nu_{\min}(\mc{F}^m,d)&= m\sum_{i=1}^s
\max\{m|D_i^B\cap \mathbf{B}(\mc{F}^m)|-d+1,0\}\\&= \sum_{i=1}^s
\max\{|\Delta_i\cap \mc{F}^m|-m^2s+m(m-r),0\}\in\{0,m(m-r)\}.
\end{aligned}
$$

Suppose $\nu_{\min}(\mc{F}^m,d)=0$. Note that from the block structure we have that $\nu_{mj}(\mc{F}^m,d)=0$ implies $\nu_{mj+r}(\mc{F}^m,d)=0$, and vice versa. So, $(\mc{F}^m,d)$ is $mj$-Singleton and $\nu_{mj}(\mc{F}^m,d)=0$. Always for the block structure, we get that $\nu_{mj}(\mc{F}^m,m(s-1)+r'+1)=0$ for any $r'\in\{0,   \ldots,m-1\}$. So, $(\mc{F}^m,m(s-1)+r'+1)$ is clearly MSRD-constructible for any $r'\in\{0,\ldots,m-1\}$.

If $\nu_{\min}(\mc{F}^m,d)=m(m-r)$, then $\Delta_s\cap\mc{F}^m=\Delta_s$ is full and thus, for any $r'\in\{0,\ldots,m-1\}$, $\mc{F}^m\cap\mc{S}_{sm,m(s-1)+r'+1,mj}\subseteq Q_{s-j,s-j}$, implying $\nu_{mj}(\mc{F}^m,m(s-1)+r'+1)=m(m-r')$.

Moreover,
$$
\begin{aligned}
m(m-r')&= \sum_{i=1}^s
\max\{|\Delta_i\cap \mc{F}^m|-m^2s+m(m-r')),0\}\\&\le \nu_{\min}(\mc{F}^m,m(s-1)+r'+1)\le \nu_{mj}(\mc{F}^m,m(s-1)+r'+1)=m(m-r').
\end{aligned}
$$
Therefore, $(\mc{F}^m,m(s-1)+r'+1)$ is MSRD-constructible, for any $r'\in\{0,\ldots,m-1\}$. From Theorem \ref{th:conndiv} we conclude that $(\mathcal{F},\delta)$ is MDS-constructible.

{Let us, now, consider case (2).}
    We can suppose that $(\mathcal{F},\delta)$ and
 $(\mc{F},\delta+1)$ are both $j$-Singleton (the other case is similar).
 Now, from Lemma \ref{le:jsingm} and Lemma \ref{le:jsing} we have that $(\mathcal{F}^m,m(\delta-1)+r'+1)$ is $mj$-Singleton for any $r'\in\{0,\ldots,m\}$, so $\nu_{\min}(\mathcal{F}^m,m(\delta-1)+r'+1)=|\mc{F}^m\cap\mc{S}_{sm,m(\delta-1)+r'+1,mj}|$. 

 Since $(\mathcal{F}^m,d)$ is MSRD-constructible, from Lemma \ref{lem:punti} we get
 $\mc{F}^m\cap\mc{S}_{sm,d,mj}=\mc{F}^m\cap\mc{T}_{s,m,d,mj}^B$, and due to the block structure of $\mc{F}^m$, denoting by $d'=m(\delta-1)+r'+1$, it must hold $\mc{F}^m\cap\mc{S}_{sm,d',mj}=\mc{F}^m\cap\mc{T}_{s,m,d',mj}^B$, for any $r'\in\{0,\ldots,m\}$, since we are always adding or deleting rows from the block-row $R^B_{\delta-j}$.

Let $I_B(\mc{F}^m,d)=\{i\in\{\lceil\frac{d}{m}\rceil,\dots,s\}\,:\,|\Delta_i\cap\mc{F}^m|\geq{dm}\}$. We can note that for any $r'\in\{0,\ldots,m\}$ we have 
$I_B(\mc{F}^m,d')\subseteq I_B(\mc{F}^m,d)$.
Indeed, we have $\left\lceil\frac{d}{m}\right\rceil=\delta\le\left\lceil\frac{d'}{m}\right\rceil$ for any $r'$. So, for $r'\ge r$ we have trivially $\left\{i\in\left\{\left\lceil\frac{d'}{m}\right\rceil,\dots,s\right\}:|\Delta_i\cap\mc{F}^m|\ge md'\right\}\subseteq I_B(\mc{F}^m,d)$.
While for $r'\le r$, due to the block structure we have that if $|\Delta_i\cap\mc{F}^m|\ge md'=m^2(\delta-1)+m(r'+1)$, since $m^2$ divides $|\Delta_i\cap\mc{F}^m|$, then $|\Delta_i\cap\mc{F}^m|\ge m^2\delta\ge md$.

Now, always from the block structure, it follows that if $i\in I_B(\mc{F}^m,d)$, then $\Delta_i\cap\mc{F}^m\supseteq\Delta_i\cap\mc{L}_{sm,d',mj}$, for any $r'\in\{0,\ldots,m\}$.

This implies also
$$\left\{i\in\left\{\left\lceil\frac{d'}{m}\right\rceil,\dots,s\right\}:\Delta_i\cap\mc{F}^m\cap\mc{S}_{sm,d',mj}\neq\emptyset\right\}=I_B(\mc{F}^m,d')$$
for any $r'\in\{0,\ldots,m\}$.

 So, from Lemma \ref{lem:punti} we can conclude that $(\mathcal{F}^m,d')$ is MSRD-constructible for any $r'\in\{0,\ldots,m\}$.

 Finally, from Theorem \ref{th:conndiv} we obtain also that $(\mathcal{F},\delta)$ and $(\mathcal{F},\delta+1)$ are MDS-constructible.
\end{proof}
}

\begin{remark}
    For the case of ($m$-)monotone (and thus convex) Ferrers diagrams, we have seen that for any $\delta\in [s-1]$ there always exists  $j\in\{0,\dots,\delta-1\}$ such that $(\mc{F},\delta)$ is $j$-Singleton and $(\mc{F},\delta+1)$ is $j$-Singleton or $(j+1)$-Singleton. Meaning that, if we want to obtain $\num{F}{\delta+1}$, we just need to consider all $j$ for which $\num{F}{\delta}=\nu_j(\mc{F},\delta)$, delete another column or row, and check which one gives us the minimum. That is, for ($m$-)monotone Ferrers diagrams we have 
    $$
\num{F}{\delta+1}=\min\{\nu_j(\mc{F},\delta+1),\nu_{j+1}(\mc{F},\delta+1)\,:\, (\mc{F},\delta) \text{ is $j$-Singleton}\}.
$$
    It would be interesting to verify if this holds in general for any diagrams.
    If this is the case, {the MDS-constructibility  of $(\mathcal{F},\delta)$ and $(\mathcal{F},\delta+1)$ would be equivalent to the MSRD-constructibility of $(\mathcal{F}^m,m(\delta-1)+r+1)$ for any $r\in\{0,\ldots,m\}$.} 
\end{remark}

\section{Reducing the problem to triangular diagrams}\label{sec:triangular_reduction}

Recall that Theorem \ref{th:optMSRD} gives a construction of MFD codes for MSRD-constructible pairs over fields of size larger than a certain constant. In this section, we take some steps further in removing that restriction on the field size.

First, we recall a result from \cite{bib:neri} adapted to the block notation.

\begin{lemma}[Lemma 4.19, \cite{bib:neri}]
\label{le:subdiblock}
Let $\mc{F}$ be an $m$-block Ferrers diagram of order $sm$, let $d\in\{2,\dots,sm\}$ and let $j\in\{0,\dots,d-1\}$ be such that $(\mc{F},d)$ is $j$-Singleton. Let $\mc{F}'\subseteq\mc{F}$ be a Ferrers diagram of order $sm$ with the property that $\mc{F}\cap\mc{L}_{sm,d,j}=\mc{F'}\cap\mc{L}_{sm,d,j}$. Then $(\mc{F}',d)$ is $j$-Singleton.
\end{lemma}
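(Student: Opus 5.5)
The plan is a direct sandwich argument on the values $\nu_i(\cdot,d)$, using the identity $\nu_i(\mc{G},d)=|\mc{G}|-|\mc{G}\cap\mc{L}_{sm,d,i}|$, valid for any Ferrers diagram $\mc{G}$ of order $sm$. The block structure of $\mc{F}$ plays no role, so the argument of \cite[Lemma 4.19]{bib:neri} transfers verbatim. We must show that $\nu_j(\mc{F}',d)\le\nu_i(\mc{F}',d)$ for every $i\in\{0,\dots,d-1\}$.

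\textbf{Step 1: evaluate $\nu_j(\mc{F}',d)$.} Using the hypothesis $\mc{F}\cap\mc{L}_{sm,d,j}=\mc{F}'\cap\mc{L}_{sm,d,j}$, we get
\[
\nu_j(\mc{F}',d)=|\mc{F}'|-|\mc{F}\cap\mc{L}_{sm,d,j}|=\nu_j(\mc{F},d)-|\mc{F}\setminus\mc{F}'|.
\]

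\textbf{Step 2: bound $\nu_i(\mc{F}',d)$ from below.} Since $\mc{F}'\subseteq\mc{F}$, we have $|\mc{F}'\cap\mc{L}_{sm,d,i}|\le|\mc{F}\cap\mc{L}_{sm,d,i}|$. Therefore
\[
\nu_i(\mc{F}',d)=|\mc{F}'|-|\mc{F}'\cap\mc{L}_{sm,d,i}|\ge|\mc{F}'|-|\mc{F}\cap\mc{L}_{sm,d,i}|=\nu_i(\mc{F},d)-|\mc{F}\setminus\mc{F}'|.
\]

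\textbf{Step 3: compare.} Since $(\mc{F},d)$ is $j$-Singleton, $\nu_j(\mc{F},d)\le\nu_i(\mc{F},d)$ for every $i$. Combining with Steps 1 and 2 gives
\[
\nu_j(\mc{F}',d)=\nu_j(\mc{F},d)-|\mc{F}\setminus\mc{F}'|\le\nu_i(\mc{F},d)-|\mc{F}\setminus\mc{F}'|\le\nu_i(\mc{F}',d).
\]
Hence $\nu_{\min}(\mc{F}',d)=\nu_j(\mc{F}',d)$, so $(\mc{F}',d)$ is $j$-Singleton.

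There is no real obstacle here. The only point to check is the identity $\nu_i=|\cdot|-|\cdot\cap\mc{L}|$ for an arbitrary Ferrers diagram, which was noted in the preliminaries. We also use that $\mc{F}'$ has the same order $sm$, so the sets $\mc{L}_{sm,d,i}$ are the same for $\mc{F}$ and $\mc{F}'$.
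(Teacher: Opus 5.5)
Your sandwich argument is correct: the identity $\nu_i(\mathcal{G},d)=|\mathcal{G}|-|\mathcal{G}\cap\mathcal{L}_{sm,d,i}|$, combined with $\mathcal{F}'\subseteq\mathcal{F}$ and the agreement of $\mathcal{F}$ and $\mathcal{F}'$ on $\mathcal{L}_{sm,d,j}$, gives exactly the chain of inequalities you write. The paper gives no proof of its own and simply imports Lemma 4.19 of Neri--Stanojkovski, whose argument is this same counting; as you observe, the block structure plays no role.
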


{
\begin{lemma}\label{lem:diagonals_blocks}
    Let $\mc{F}$ be an $m$-block Ferrers diagram of order $sm$, and let
$d\in\{2,\ldots,sm\}$. Assume that $I_B(\mc{F},d)\neq\emptyset$, and set
$v:=\min I_B(\mc{F},d)$ and $l:=\max I_B(\mc{F},d).$
Then
$$I_B(\mc{F},d)=\{v,v+1,\ldots,l\},$$
and the block diagonal $D_v^B$ is full, i.e. $D_v^B\subseteq \mathbf{B}(\mc{F}).$
Consequently, 
$\Delta_i\subseteq \mc{F}$ for every $i\in[v]$. 
\end{lemma}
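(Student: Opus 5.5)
The plan is to pass to the contraction and argue with the dot diagonals of an ordinary Ferrers diagram. Set $\mc{F}':=\phi_m^{-1}(\mc{F})$, a Ferrers diagram of order $s$, and $a_i:=|D_i\cap\mc{F}'|$, so that $a_i=|D_i^B\cap\mathbf{B}(\mc{F})|$ for every $i\in[s]$. Since $a_i$ is an integer, $ma_i\ge d$ holds if and only if $a_i\ge\delta':=\lceil d/m\rceil$. Hence
$$I_B(\mc{F},d)=\{i\in[s]\,:\,a_i\ge\delta'\}.$$
Moreover $D_i^B\subseteq\mathbf{B}(\mc{F})$ if and only if $D_i\subseteq\mc{F}'$, which is equivalent to $\Delta_i\subseteq\mc{F}$. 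So the whole statement reduces to two combinatorial facts about the diagonals of $\mc{F}'$.

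\emph{Fact A: if $D_i\subseteq\mc{F}'$ and $i\ge2$, then $D_{i-1}\subseteq\mc{F}'$.} The cells $(l,s-i+l)$ with $2\le l\le i$ lie in $D_i\subseteq\mc{F}'$. Their upper neighbours $(l-1,s-i+l)$ are then in $\mc{F}'$ by property (2) of Definition~\ref{def:ferr}. As $l$ ranges over $2,\dots,i$, these upper neighbours are exactly the cells of $D_{i-1}$, so $D_{i-1}\subseteq\mc{F}'$.

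\emph{Fact B: if $D_i\not\subseteq\mc{F}'$ and $i\ge2$, then $a_{i-1}\ge a_i$.} Pick a missing cell $(l_0,s-i+l_0)\notin\mc{F}'$. For each cell $(l,s-i+l)\in\mc{F}'$ we send it to a cell of $D_{i-1}\cap\mc{F}'$ as follows.
\begin{itemize}
\item If $l<l_0$, send it to its right neighbour $(l,s-i+l+1)$, which is in $\mc{F}'$ by property (1); here $l\le i-1$.
\item If $l>l_0$, send it to its upper neighbour $(l-1,s-i+l)$, which is in $\mc{F}'$ by property (2); here the row index $l-1$ is at least $l_0$.
\end{itemize}
The first kind of image has row index below $l_0$ and the second kind has row index at least $l_0$. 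So the map is injective and lands in $D_{i-1}\cap\mc{F}'$, which gives $a_{i-1}\ge a_i$. This injection is the only genuinely delicate point, because diagonals of a Ferrers diagram need not be contiguous: for instance $[1,1,3]$ has a hole in $D_3$.

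Fullness of $D_v^B$. Let $v=\min I_B$. If $v=1$, then $a_1\ge\delta'\ge1$, so $D_1$ is full. If $v\ge2$ and $D_v$ were not full, Fact B would give $a_{v-1}\ge a_v\ge\delta'$, contradicting the minimality of $v$. Hence $D_v$ is full, i.e.\ $D_v^B\subseteq\mathbf{B}(\mc{F})$. Applying Fact A repeatedly then gives $D_i\subseteq\mc{F}'$, hence $\Delta_i\subseteq\mc{F}$, for every $i\le v$.

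Interval property. Let $v<i<l$; we must show $a_i\ge\delta'$. Consider the indices $k=l,l-1,\dots,i$ in turn. If some $D_k$ with $i\le k\le l$ is full, Fact A makes $D_i$ full, so $a_i=i>v\ge a_v\ge\delta'$, using $a_v\le v$. Otherwise every $D_k$ with $i<k\le l$ is non-full. Fact B then yields the chain $a_i\ge a_{i+1}\ge\dots\ge a_l\ge\delta'$. In both cases $i\in I_B(\mc{F},d)$, so $I_B(\mc{F},d)=\{v,v+1,\dots,l\}$.
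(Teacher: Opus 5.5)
Your proof is correct and follows essentially the same route as the paper. Your Fact B is exactly the paper's injection (r)/(u) from $X_v$ into $X_{v-1}$, stated on the contraction instead of on block indices, and Fact A is the paper's upward propagation of fullness; the one place you go further is the interval property, which the paper only asserts in one line via the Ferrers property, whereas you derive it from Facts A and B (the diagonal counts increase through the full diagonals and then weakly decrease).
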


\begin{proof}
For every $i \in [s]$ write
$$X_i:=\{a\in[i]\,:\, Q_{a,s-i+a}\in\mathbf{B}(\mc{F})\}$$
and
$$b_i:=|X_i|=|D_i^B\cap \mathbf{B}(\mc{F})|.$$
We first observe that the set $I_B(\mc{F},d)$ is an interval. Indeed, if
$i<k$ and $i,k\in I_B(\mc{F},d)$, then every block diagonal between $D_i^B$ and $D_k^B$ contains at least $\min\{b_i,b_k\}$ blocks of $\mc{F}$, by the
Ferrers diagram property. Hence, every intermediate index also belongs to $I_B(\mc{F},d)$.

Now, let $v:=\min I_B(\mc{F},d)$. We claim that $D_v^B$ is full. If $v=1$, then this is clearly true. Hence, Suppose, by contradiction, that $v>1$ and $D_v^B$ is not full. Then, $X_v\subsetneq[v]$, $b_v<v$ and there exists $a \in [v]\setminus X_v$. Since $\mc{F}$ is a
Ferrers diagram, every block of $D_v^B\cap \mathbf{B}(\mc{F})$, except possibly the
first one, forces a block of $D_{v-1}^B\cap \mathbf{B}(\mc{F})$ by moving one step right in the same block row and a block of $D_{v-1}^B\cap \mathbf{B}(\mc{F})$ by moving one step up in the same block column. 
In particular, 
\begin{enumerate}
    \item[(r)] if $i \in X_v\cap[a-1]$, then $i\in X_{v-1}$;
    \item[(u)] if $i \in X_v\cap\{a+1,\ldots,v\}$, then  $i-1 \in X_{v-1}$.
\end{enumerate}
This gives an injection from $X_v$ to $X_{v-1}$ and hence
$b_{v-1}\ge b_v.$
Therefore,
$mb_{v-1}\ge mb_v\ge d$,
so $v-1\in I_B(\mc{F},d)$, contradicting the minimality of $v$. Thus, $D_v^B$ is full.

Finally, if $D_v^B$ is full, then the Ferrers property implies that all
block diagonals $D_i^B$ with $1\le i\le v$ are full. Equivalently, for every $i\in[v]$,
$\Delta_i\subseteq \mc{F}.$
This concludes the proof.
\end{proof}
}

Now, we are able to give a result that is the analog of Theorem \ref{th:optany} for MSRD-constructible pairs. Observe that, in our case, we have to suppose the existence of MFD codes on block triangular diagrams.
\begin{theorem}
\label{theo:tri}
Let $\mc{F}$ be an $m$-block Ferrers diagram of order $sm$ and $d\in\{2,\dots,sm\}$. % and $\mathbb{F}_q$ be a finite field.
Assume that:
\begin{enumerate}
 \item $(\mc{F},d)$ is MSRD-constructible;
    \item { if $I_B(\mc{F},d)\neq\emptyset$, then there exists a
$[\mc{T}^B_{l,m},\nu_{\min}(\mc{T}^B_{l,m},d),d]_q$
MFD code, where
$l:=\max I_B(\mc{F},d)$.} 

\end{enumerate}
Then, there exists an $[\mathcal{F},{\nu_{\min}(\mathcal{F},d)},d]_q$ MFD code.
\end{theorem}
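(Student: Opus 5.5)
The plan is to mimic the reduction of \cite[Theorem 4.22]{bib:neri}. We pass from $\mc{T}^B_{l,m}$ to $\mc{F}$ through an intermediate diagram, using both parts of Lemma~\ref{lem:incldots}.

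\emph{Trivial case.} If $I_B(\mc{F},d)=\emptyset$, MSRD-constructibility gives $\nu_{\min}(\mc{F},d)=0$, and the zero code is MFD.

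\emph{Embedding the triangle.} Otherwise set $v:=\min I_B(\mc{F},d)$ and $l:=\max I_B(\mc{F},d)$. Consider $\mc{T}:=\bigcup_{i=1}^{l}\Delta_i$. This is the copy of $\mc{T}^B_{l,m}$ placed in the first $l$ block rows and the last $l$ block columns: $\Delta_i$ with $i\le l$ consists of the blocks $Q_{a,s-i+a}$ with $a\le i$, and these lie exactly there. It is again a Ferrers diagram of order $sm$, and its block diagonals are the $\Delta_i$ with $i\le l$, so $\nu_{\min}$ and the MSRD-sum for $\mc{T}$ coincide with those of $\mc{T}^B_{l,m}$ (one only adds empty rows and columns). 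Hence hypothesis (2) gives a $[\mc{T},\nu_{\min}(\mc{T},d),d]_q$ MFD code. Set $\mc{F}':=\mc{F}\cap\mc{T}$, which is again an $m$-block Ferrers diagram.

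\emph{Step 1: $\nu_{\min}(\mc{F}',d)=\nu_{\min}(\mc{F},d)$.} Since $\Delta_i\subseteq\mc{T}$ for $i\le l$, we have $\Delta_i\cap\mc{F}'=\Delta_i\cap\mc{F}$ for $i\le l$. For $i>l$, neither diagram has a contributing diagonal, by the definition of $l$ for $\mc{F}$ and because $\mc{F}'\subseteq\mc{F}$. So the MSRD-sums $m\sum_i\max\{m|D_i^B\cap\mathbf{B}(\cdot)|-d+1,0\}$ of $\mc{F}$ and $\mc{F}'$ coincide, and by hypothesis (1) the common value is $\nu_{\min}(\mc{F},d)$. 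On the other hand, $\nu_{\min}(\mc{F}',d)\le\nu_{\min}(\mc{F},d)$ by monotonicity of each $\nu_j$ under inclusion. Also, the MSRD-sum of $\mc{F}'$ is at most $\nu_{\min}(\mc{F}',d)$: apply Lemma~\ref{le:dimMSRD} over some large auxiliary field and then Theorem~\ref{th:dimension}. This is legitimate because the inequality is purely combinatorial and does not depend on $q$. Together these give equality, so $(\mc{F}',d)$ is MSRD-constructible with the same value as $\mc{F}$.

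\emph{Step 2: $\nu_{\min}(\mc{F}',d)=\nu_{\min}(\mc{T},d)-|\mc{T}\setminus\mc{F}'|$.} By Corollary~\ref{th:monoMSRD1}, $\nu_{\min}(\mc{T},d)$ equals the MSRD-sum of $\mc{T}$, whose contributing diagonals are $i\in\{\lceil d/m\rceil,\ldots,l\}$. By Lemma~\ref{lem:diagonals_blocks}, $\Delta_i\subseteq\mc{F}$ for all $i\le v$, and $I_B(\mc{F},d)=\{v,\ldots,l\}$. Therefore every dot of $\mc{T}\setminus\mc{F}'$ lies on some $\Delta_i$ with $v<i\le l$. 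Each such $i$ is contributing for both $\mc{T}$ and $\mc{F}'$, so there the term drops by exactly $|\Delta_i\setminus\mc{F}|$. Any $i<v$ with $mi\ge d$ contributes equally to both sums, since $\Delta_i$ is full. Subtracting the two MSRD-sums then gives exactly $|\mc{T}\setminus\mc{F}'|$.

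\emph{Conclusion.} By Lemma~\ref{lem:incldots}(2), intersecting the MFD code on $\mc{T}$ with $\mathbb{F}_q^{\mc{F}'}$ gives an MFD code on $\mc{F}'$. By Lemma~\ref{lem:incldots}(1) together with Step 1, that code is an $[\mc{F},\nu_{\min}(\mc{F},d),d]_q$ MFD code.

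The main obstacle is the bookkeeping in Step 2. One must check that no dot of $\mc{T}\setminus\mc{F}$ sits on a diagonal of $\mc{T}$ whose MSRD-term is zero or gets truncated by the $\max\{\cdot,0\}$. This is precisely where the interval and fullness statements of Lemma~\ref{lem:diagonals_blocks} are needed, and I would verify it diagonal by diagonal.
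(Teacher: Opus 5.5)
Your proof is correct. Its architecture matches the paper's: you pass through an intermediate diagram $\mc{F}'$ with $\mc{F}'\subseteq\mc{T}$ and $\mc{F}'\subseteq\mc{F}$, then apply Lemma~\ref{lem:incldots}(2) followed by (1). In fact your $\mc{F}'=\mc{F}\cap\mc{T}$ is the same set as the paper's $(\mc{F}\cap\mc{S}_{sm,d,j})\cup\bigcup_{i\le l}(\Delta_i\cap\mc{L}_{sm,d,j})$, since the paper establishes both $\mc{F}\cap\mc{S}_{sm,d,j}\subseteq\mc{T}$ and $\mc{T}\cap\mc{L}_{sm,d,j}\subseteq\mc{F}$.

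The genuine difference is how the two numerical hypotheses of Lemma~\ref{lem:incldots} are verified. The paper fixes a Singleton index $j\in\{km,km+r\}$ and stays in the Singleton framework. Lemma~\ref{lem:punti} (with Lemma~\ref{lem:diagonals_blocks} for $i<v$) controls $\mc{F}\cap\mc{S}_{sm,d,j}$ and gives $\mc{T}\cap\mc{L}_{sm,d,j}\subseteq\mc{F}$. Proposition~\ref{prop:jsingtriang} makes the triangle $j$-Singleton, and Lemma~\ref{le:subdiblock} transfers this to $\mc{F}'$. After that, both identities are plain counts inside $\mc{S}_{sm,d,j}$, using $\mc{T}\setminus\mc{F}'\subseteq\mc{S}_{sm,d,j}$.

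You avoid the choice of $j$, Lemma~\ref{lem:punti} and Lemma~\ref{le:subdiblock} by comparing diagonal sums instead. The combinatorial inequality ``diagonal sum $\le\nu_{\min}$'' comes from Construction~\ref{con:MSRD} over an auxiliary field plus Theorem~\ref{th:dimension}, the same device the paper uses in Theorem~\ref{th:connection}. It gives your Step~1 and, as a by-product, the MSRD-constructibility of $(\mc{F}',d)$. Lemma~\ref{lem:diagonals_blocks}, which both arguments use, then makes the subtraction in your Step~2 exact.

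The price is that you need MSRD-constructibility of $(\mc{T}^B_{l,m},d)$. Instead of routing this through Corollary~\ref{th:monoMSRD1}, and hence through Theorem~\ref{th:connection2} and \cite[Proposition 4.15]{bib:neri}, you can check it directly. Write $d-1=m(\delta-1)+r$ with $0\le r\le m-1$. Then $d\le lm$ because $l\in I_B(\mc{F},d)$, and $\lceil d/m\rceil=\delta$. So the diagonal sum is $m\sum_{i=\delta}^{l}(mi-d+1)$, which equals the closed formula of Proposition~\ref{prop:jsingtriang}.

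Finally, the case ``$i<v$ with $mi\ge d$'' in your Step~2 is vacuous. Since $\Delta_i\subseteq\mc{F}$ for all $i\le v$, the minimality of $v$ forces $v=\lceil d/m\rceil$.
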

\begin{proof}
Let $(\mc{F},d)$ be an MSRD-constructible pair and let $j\in\{0, \ldots, d-1\}$ be such that $(\mc{F},d)$ is $j$-Singleton.
By Lemma~\ref{le:nublock}, since $\mc{F}$ is an $m$-block Ferrers diagram, we may choose $j$
in the set $j\in\{km, km+r\,:\, k\in\{0,\ldots,\delta-1\}\}$, where
$$d-1=m(\delta-1)+r,\qquad 0\le r\le m-1.$$

{
Set
$$Y:=\left\{ i\in \left\{ \left\lceil \frac{d}{m} \right\rceil,\ldots,s \right\} \, :\,  \Delta_i\cap \mc{F}\cap \mc{S}_{sm,d,j}\neq\emptyset
\right\}.$$
By Lemma~\ref{lem:punti}, we have $Y=I_B(\mc{F},d)$.

If $I_B(\mc{F},d)=\emptyset$, then $\nu_{\min}(\mc{F},d)=0$, and the zero code is
an $[\mc{F},0,d]_q$ MFD code. Hence suppose that $I_B(\mc{F},d)\neq\emptyset$
and set $l:=\max I_B(\mc{F},d)$ and $v:=\min I_B(\mc{F},d)$. Define
$$\mc{F}' := (\mc{F}\cap \mc{S}_{sm,d,j}) \cup
\left(
\bigcup_{i=1}^l
(\Delta_i\cap \mc{L}_{sm,d,j})
\right).$$
By Lemma~\ref{lem:punti} part \ref{lem:sin3}, we have $\Delta_i\cap \mc{L}_{sm,d,j}\subseteq \mc{F}$ for every $i \in \{v,\ldots,l\}$. Moreover, by Lemma \ref{lem:diagonals_blocks}, $\Delta_i\cap \mc{L}_{sm,d,j}\subseteq \Delta_i\subseteq \mc{F}$ for every $i \in [v]$. Thus, we deduce
$\mc{F}'\subseteq \mc{F}$. Furthermore, since
$l=\max I_B(\mc{F},d)=\max Y$, we have $\mc{F}'\subseteq \mc{T}^B_{l,m}$.

With the choice of $j$ made at the beginning, Proposition~\ref{prop:jsingtriang} implies that the pair $(\mc{T}^B_{l,m},d)$ is $j$-Singleton. Let $\mc{C_T}$ be the MFD code on $\mc{T}^B_{l,m}$, which exists by hypothesis.
Since
$$\mc{F}'\cap \mc{L}_{sm,d,j}=\mc{T}^B_{l,m}\cap \mc{L}_{sm,d,j},$$
Lemma~\ref{le:subdiblock} gives that $(\mc{F}',d)$ is also $j$-Singleton. Hence
$$\nu_{\min}(\mc{T}^B_{l,m},d)=\nu_{\min}(\mc{F}',d)+|\mc{T}^B_{l,m}\setminus \mc{F}'|.$$
By Lemma~\ref{lem:incldots} part~(2),
${\mc{C}}' := \mc{C_T}\cap \mathbb F_q^{\mc{F}'}$
is an $[\mc{F}',\nu_{\min}(\mc{F}',d),d]_q$ MFD code.

Finally, since $\mc{F}'\cap \mc{S}_{sm,d,j}=\mc{F}\cap \mc{S}_{sm,d,j}$, we have
$$\nu_{\min}(\mc{F}',d)=\nu_{\min}(\mc{F},d).$$
Since $\mc{F}'\subseteq \mc{F}$, Lemma~\ref{lem:incldots} part~(1) gives an
$[\mc{F},\nu_{\min}(\mc{F},d),d]_q$ MFD code.}
\end{proof}

\begin{remark}
From Theorem \ref{theo:tri}, we have that, in order to prove Conjecture \ref{con:etzsilb} for the case of MSRD-constructible pairs, it is enough to prove it for the case of block triangular Ferrers diagrams.
\end{remark}

\begin{remark}
\label{rem:ph}
In the case that $m=p^h$ for some prime $p$ and some integer $h>0$, from \cite[Theorem 4.9]{bib:neri} we have that there exists  a $[\mc{T}_{l,m}^B,\nu_{\min}(\mc{T}_{l,m}^B,d),d]_q$ MFD code on any finite field of characteristic $p$ for every $l\ge  1$ and $d\in\{2,\ldots,lm\}$.
\end{remark}
The following corollary is a direct consequence of Theorem \ref{theo:tri} in the particular case of Remark \ref{rem:ph}.
\begin{corollary}
\label{cor:tri2}
Let $p$ be a prime and $h>0$ be an integer, let $\mc{F}$ be a $p^h$-block Ferrers diagram of order $sp^h$, $d\in\{2,\dots,sp^h\}$ and let $\mathbb{F}_q$ be a finite field of characteristic $p$. If $(\mc{F},d)$ is MSRD-constructible, then there exists an $[\mathcal{F},{\nu_{\min}(\mathcal{F},d)},d]_q$ MFD code.  
\end{corollary}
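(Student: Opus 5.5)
The plan is to obtain this as a direct application of Theorem~\ref{theo:tri}. Hypothesis (1) of that theorem, the MSRD-constructibility of $(\mc{F},d)$, is exactly what we assume, so only hypothesis (2) needs to be checked.

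If $I_B(\mc{F},d)=\emptyset$, hypothesis (2) holds vacuously. In that case the zero code already attains $\nu_{\min}(\mc{F},d)=0$. Otherwise, set $l:=\max I_B(\mc{F},d)\in[s]$.

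Next we need an $[\mc{T}^B_{l,p^h},\nu_{\min}(\mc{T}^B_{l,p^h},d),d]_q$ MFD code over $\mathbb{F}_q$. For this we invoke Remark~\ref{rem:ph}, that is, \cite[Theorem 4.9]{bib:neri}. It provides such a code over every finite field of characteristic $p$, for every $l\ge 1$ and every $d\in\{2,\ldots,lp^h\}$.

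The only point needing care is the range of $d$. The remark requires $d\le lp^h$, whereas we only know $d\le sp^h$. Since $l\in I_B(\mc{F},d)$, we have $p^h|D_l^B\cap\mathbf{B}(\mc{F})|\ge d$. Because $|D_l^B\cap\mathbf{B}(\mc{F})|\le |D_l^B|=l$, this gives $d\le lp^h$. Together with $d\ge 2$, we get $d\in\{2,\ldots,lp^h\}$, so the remark applies.

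Hypothesis (2) of Theorem~\ref{theo:tri} therefore holds, and the theorem yields an $[\mc{F},\nu_{\min}(\mc{F},d),d]_q$ MFD code. No real obstacle arises; the range check on $d$ is the only step that is not purely formal.
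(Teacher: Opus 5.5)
Your proposal is correct and follows the paper's own route: the paper obtains this corollary directly from Theorem~\ref{theo:tri}, with hypothesis~(2) supplied by Remark~\ref{rem:ph}. Your check that $l\in I_B(\mc{F},d)$ forces $d\le p^h\,|D_l^B\cap\mathbf{B}(\mc{F})|\le lp^h$, so that the remark applies, is a detail the paper leaves implicit.
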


\subsection{MFD codes over block triangular Ferrers diagrams}

From the previous results, we have that solving Conjecture \ref{con:etzsilb} for MSRD-constructible pairs can be reduced to solving it for the case of block triangular Ferrers diagrams. The results of Neri and Stanojkovski \cite{bib:neri} partially solve this problem when $m$ is a prime power, showing the existence of an MFD code over any field with characteristic that divides $m$. 
Another result for block triangular Ferrers diagrams has been obtained in \cite{bib:Liu2023}. In particular, the author showed that there exists a $[\mc{T}_{s,2}^B,\nu_{\min}(\mc{T}_{s,2}^B,4),4]_q$ MFD code over any field $\mathbb{F}_q$ and any positive integer $s$. 
{Moreover, as observed in Remark \ref{rm:monotone_triangular}, using Construction \ref{con:MSRD} -- or, equivalently, \cite[Theorem 5.7]{rakhi2023} -- for any $q>s$, we can obtain an $[\mc{T}_{s,m}^B,\nu_{\min}(\mc{T}_{s,m}^B,d),d]_q$ MFD code for any $d\in[sm]$.}

To the best of our knowledge, there are no other general results concerning block triangular Ferrers diagrams. 

We can note that, for an $m$-block Ferrers diagram $\mc{F}$, when we consider the parameter $d$ we can restrict to the case $d\ge m+2$. Indeed, if $d\le m+1$, then we have always that -- up to considering its standard version $\mc{F}_{\mathrm{st}}$ -- either the last $d-1$ columns or the first $d-1$ rows contain $sm$ elements each one. So, from  \cite[Theorem 2]{bib:etzion1}, there exists an $[\mathcal{F},{\nu_{\min}(\mathcal{F},d)},d]_q$ MFD code over any field $\mathbb{F}_q$.

In this section, we prove Conjecture \ref{con:etzsilb} for MSRD-constructible pairs in the case of $d=sm,sm-1$, with no restrictions on $s,m$ and $q$.

\begin{lemma}\label{lem:TsmMFD}
    Let $m,s\ge 2$. Then, for any prime power $q$ there exists a $[\mc{T}_{s,m}^B,\nu_{\min}(\mc{T}_{s,m}^B,sm),sm]_q$ MFD code.
\end{lemma}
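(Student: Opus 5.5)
The plan is to evaluate the Etzion--Silberstein bound explicitly and then exhibit a code of that dimension using only the field structure of $\mathbb{F}_{q^m}$. No field-size assumption is needed, so Construction~\ref{con:MSRD} is not used.

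\emph{Step 1: compute the bound.} For $d=sm$ we have $d-1=m(s-1)+(m-1)$, so $\delta=s$ and $r=m-1$. The case $\delta=s$ of Proposition~\ref{prop:jsingtriang} then gives
\[
\nu_{\min}(\mc{T}^B_{s,m},sm)=m(m-r)=m .
\]
So it suffices to construct an $m$-dimensional subspace of $\mathbb{F}_q^{\mc{T}^B_{s,m}}$ in which every nonzero matrix has full rank $sm$.

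\emph{Step 2: the construction.} Fix a linear embedding $\psi:\mathbb{F}_{q^m}\to\mathbb{F}_q^{m\times m}$, for instance by sending $\alpha$ to the matrix of multiplication by $\alpha$ in a fixed $\mathbb{F}_q$-basis. Its image $\mc{A}$ is an $m$-dimensional $\mathbb{F}_q$-subspace in which every nonzero matrix is invertible, i.e.\ an $[m\times m,m,m]_q$ MRD code. Define
\[
\mc{C}:=\{\operatorname{diag}(A,A,\ldots,A)\in\mathbb{F}_q^{sm\times sm}\,:\,A\in\mc{A}\},
\]
where the block-diagonal matrix has $s$ copies of $A$ placed on the blocks $Q_{i,i}$, $i\in[s]$. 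Since every $Q_{i,i}$ lies in $\mc{T}^B_{s,m}$, we have $\mc{C}\subseteq\mathbb{F}_q^{\mc{T}^B_{s,m}}$. The map $A\mapsto\operatorname{diag}(A,\ldots,A)$ is linear and injective, so $\dim\mc{C}=m$.

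\emph{Step 3: minimum distance.} If $A\neq 0$, then $A$ is invertible, so $\operatorname{diag}(A,\ldots,A)$ is invertible and has rank $sm$. Hence $\mc{C}$ is an $[\mc{T}^B_{s,m},m,sm]_q$ code. By Step~1 this meets the bound, so $\mc{C}$ is MFD.

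The only point needing care is the bound computation in Step~1, namely confirming the value $m(m-r)$ in the $\delta=s$ case, and this is already provided by Proposition~\ref{prop:jsingtriang}. The rest is immediate, and the argument works for every prime power $q$.
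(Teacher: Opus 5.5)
Your proof is correct and is essentially the paper's construction. The paper takes the maps $x\mapsto\alpha x$ on $\mathbb{F}_{q^{sm}}$ for $\alpha\in\mathbb{F}_{q^m}$, written in a basis adapted to the decomposition $\gamma^i\mathbb{F}_{q^m}$; each such matrix is exactly your $\operatorname{diag}(A,\ldots,A)$. Your Step~1 adds a justification of $\nu_{\min}(\mc{T}^B_{s,m},sm)=m$ via Proposition~\ref{prop:jsingtriang} (case $\delta=s$, $r=m-1$), which the paper only asserts.
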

\begin{proof}
    Let $\{1,\gamma,\ldots,\gamma^{s-1}\}$ be a basis of $\mathbb{F}_{q^{sm}}$ over $\mathbb{F}_{q^{m}}$. That is, an element $x$ in $\mathbb{F}_{q^{sm}}$ can be represented as $x= x_0+\gamma x_1+\ldots+\gamma^{s-1}x_{s-1}$.
    
    Then, for any $\alpha\in\mathbb{F}_{q^m}$ we can consider the $\mathbb{F}_q$-linear mapping
    $$\begin{array}{rccl}\psi_{\alpha}:&\F_{q^{sm}}&\rightarrow &\F_{q^{sm}}\\
& x& \longmapsto & \alpha x\end{array}.$$
Now, let $\{\delta_1,\ldots,\delta_m\}$ be a basis of $\mathbb{F}_{q^m}$ over $\mathbb{F}_q$, then $\{\gamma^i\delta_1,\ldots,\gamma^i\delta_m\,:\, i=0,\ldots,s-1\}$ 
is a basis of $\mathbb{F}_{q^{sm}}$ over $\mathbb{F}_q$. Since $\psi_\alpha$ maps $\gamma^i\mathbb{F}_{q^m}$ into itself, we have that $\psi_\alpha$, when represented with respect to the previous basis, is a block triangular matrix with support in $\mc{T}_{s,m}^B$, in particular it is a block diagonal matrix. It is easy to see that such a matrix has full rank, whenever $\alpha\ne 0$, and the set $M:=\{\psi_\alpha\,:\,\alpha\in \mathbb{F}_{q^m}\}$ is an $m$-dimensional $\mathbb{F}_q$-vector space. Therefore, $M$ is a $[\mc{T}_{s,m}^B,\nu_{\min}(\mc{T}_{s,m}^B,sm),sm]_q$ MFD code since $\nu_{\min}(\mc{T}_{s,m}^B,sm)=m$.
\end{proof}

\begin{lemma}\label{lem:Tsm-1MFD}
    Let $m,s\ge 2$. Then, for any prime power $q$ there exists a $[\mc{T}_{s,m}^B,\nu_{\min}(\mc{T}_{s,m}^B,sm-1),sm-1]_q$ MFD code.
\end{lemma}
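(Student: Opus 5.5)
The plan is to build the code explicitly inside $\mathbb{F}_q^{\mc{T}^B_{s,m}}$, with no condition on the field size. First compute the target dimension with Proposition~\ref{prop:jsingtriang}. Since $d-1=sm-2=m(s-1)+(m-2)$, we have $\delta=s$ and $r=m-2$. The formula then gives
$$\nu_{\min}(\mc{T}^B_{s,m},sm-1)=m(m-r)=2m.$$
So we need a $2m$-dimensional $\mathbb{F}_q$-space $\mc{C}$ of block upper triangular matrices in which every nonzero element has rank at least $sm-1$, i.e. kernel of $\mathbb{F}_q$-dimension at most $1$.

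Blocks of the form $x\mapsto \alpha x$, as in Lemma~\ref{lem:TsmMFD}, cannot do this. A two-parameter family of field multiplications always has an element vanishing on some block, which drops the rank by $m\ge 2$. The diagonal blocks must therefore have rank defect at most one, so I would use linearized binomials. Identify each diagonal block with $\mathbb{F}_{q^m}$ and, for $(a,b)\in\mathbb{F}_{q^m}^2$, put
$$D_{a,b}(x)=ax+bx^{q}$$
on every diagonal block $Q_{i,i}$. This is a $2m$-dimensional Gabidulin-type family, and each nonzero $D_{a,b}$ has kernel of $\mathbb{F}_q$-dimension at most $1$. On the superdiagonal blocks $Q_{i,i+1}$, $i\in[s-1]$, put a coupling map $E_{a,b}$ depending $\mathbb{F}_q$-linearly on $(a,b)$; all other blocks are zero. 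The code is $\mc{C}=\{M_{a,b}\}$, which is supported on $\mc{T}^B_{s,m}$ and has dimension $2m$ provided $(a,b)\mapsto M_{a,b}$ is injective, which is clear from the diagonal.

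The rank bound comes from back-substitution on the kernel. Take $x=(x_1,\dots,x_s)$ with $M_{a,b}x=0$.
\begin{itemize}
\item The last block row gives $D_{a,b}x_s=0$, so $x_s\in\ker D_{a,b}$.
\item Row $i<s$ gives $D_{a,b}x_i=-E_{a,b}x_{i+1}$.
\item Suppose $E_{a,b}$ is chosen so that $E_{a,b}(\ker D_{a,b})\cap \mathrm{Im}\,D_{a,b}=\{0\}$ whenever $D_{a,b}$ is singular. Then, inductively from the bottom, row $i$ forces $E_{a,b}x_{i+1}\in\mathrm{Im}\,D_{a,b}$, hence $x_{i+1}=0$, and then $x_i\in\ker D_{a,b}$.
\item Only $x_1\in\ker D_{a,b}$ survives, so $\dim\ker M_{a,b}\le 1$ and $\rk M_{a,b}\ge sm-1$.
\item If $D_{a,b}$ is invertible, the matrix is block triangular with invertible diagonal blocks, so it has full rank $sm$.
\end{itemize}

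The main obstacle is choosing $E_{a,b}$ linearly in $(a,b)$ with the transversality property above. I would first try $E_{a,b}=\mathrm{id}$, or a fixed multiplication $x\mapsto\theta x$; this is trivially linear in $(a,b)$, being constant. Then check that a singular $D_{a,b}$ with kernel $\langle w\rangle$ satisfies $\theta w\notin \mathrm{Im}\,D_{a,b}$.

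For the check, describe $\mathrm{Im}\,D_{a,b}$ as the kernel of a trace functional, $\{y:\mathrm{Tr}(cy)=0\}$, where $c$ is determined by the adjoint $q$-polynomial $a x+(bx)^{q^{-1}}$. The condition becomes $\mathrm{Tr}(c\theta w)\neq 0$, and one verifies that a single $\theta$ works for all singular pairs $(a,b)$. If no constant $\theta$ works uniformly, the fallback is to make the coupling depend on $b$, e.g. $E_{a,b}(x)=bx$ or $E_{a,b}(x)=bx^{q^{2}}$, which is still linear in $(a,b)$, and redo the trace computation. Once the coupling is fixed, the dimension count $2m=\nu_{\min}(\mc{T}^B_{s,m},sm-1)$ shows that $\mc{C}$ is MFD over every $\mathbb{F}_q$.
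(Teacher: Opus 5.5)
\textbf{There is a genuine gap, and it sits exactly at the step you flag as the main obstacle.} Your architecture is the same as the paper's: binomials $ax+bx^q$ on every diagonal block, a coupling on the block superdiagonal, and back-substitution on the kernel. The count $\nu_{\min}(\mc{T}^B_{s,m},sm-1)=2m$ and the reduction to a transversality condition are also correct. But none of the couplings you propose works, and the claim that one verifies a single $\theta$ works uniformly is false.

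To see this, make the image explicit. $D_{a,b}$ is singular only if $a,b\neq 0$ and $a=-bw^{q-1}$ for some $w\neq 0$. In that case $\ker D_{a,b}=w\F_q$ and $D_{a,b}(wu)=bw^q(u^q-u)$, so
\[
\mathrm{Im}\,D_{a,b}=bw^q\cdot\ker\mathrm{Tr}_{q^m/q}.
\]
Your induction also needs $E_{a,b}$ to be injective on $\ker D_{a,b}$ to get $x_{i+1}=0$. Granting that, transversality is the condition $\mathrm{Tr}_{q^m/q}\bigl(E_{a,b}(w)/(bw^q)\bigr)\neq 0$. Here $w$ and $b$ range independently over $\F_{q^m}^*$. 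The three candidates fail as follows.
\begin{enumerate}
\item Take any coupling $E$ not depending on $(a,b)$, scalar or not. Fixing $w$ and varying $b$ makes $E(w)/(bw^q)$ run over all of $\F_{q^m}^*$. Since $m\ge 2$, this set contains nonzero trace-zero elements. So the hyperplane $\mathrm{Im}\,D_{a,b}$ rotates with $b$, and no fixed coupling avoids all of these hyperplanes.
\item For $E_{a,b}(x)=bx$ the condition becomes $\mathrm{Tr}(w^{1-q})\neq 0$.
\item For $E_{a,b}(x)=bx^{q^2}$ it becomes $\mathrm{Tr}(w^{q-1})\neq 0$.
\end{enumerate}
Both of the last two fail at $w\in\F_q^*$ whenever the characteristic divides $m$. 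Concretely, take $q=m=s=2$ and $a=b=1$, where all three candidates (with $\theta=1$) coincide with the identity. Then $D(x)=x+x^2$ on $\F_4$ has kernel and image both equal to $\F_2$, so $E(1)=1\in\mathrm{Im}\,D$. The resulting $4\times 4$ matrix has rank $2<3$. They also fail, for example, for $m=2$ and $q\equiv 3\pmod 4$, taking $w$ with $w^{1-q}=\sqrt{-1}$, which has norm $1$ and trace $0$.

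\textbf{The missing idea is that the coupling must scale with the diagonal map}, so that $E_{a,b}(w)$ is a fixed multiple of $bw^q$. The paper takes $E_{a,b}(x)=\zeta a x$ with $\mathrm{Tr}_{q^m/q}(\zeta)\neq 0$; this is its term $-\alpha\zeta x$, since $a=-\alpha$ in its notation. With this choice:
\begin{enumerate}
\item $E_{a,b}(w)=-\zeta bw^q$, so the condition becomes $\mathrm{Tr}_{q^m/q}(\zeta)\neq 0$, independently of $(a,b)$.
\item $E_{a,b}$ is injective whenever $D_{a,b}$ is singular, because then $a\neq 0$.
\end{enumerate}
The choice $E_{a,b}(x)=\zeta b x^q$ would work for the same reason. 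With such a coupling your back-substitution goes through unchanged and gives $\dim\ker M_{a,b}\le 1$.
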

\begin{proof}
    Let us consider the case $s=2$. Then, let $\gamma\in\mathbb{F}_{q^{2m}}$ such that $\mathbb{F}_{q^{2m}}=\mathbb{F}_{q^{m}}+\gamma\mathbb{F}_{q^{m}}$.
Let $\zeta\in \Fqm$ be such that $\mathrm{Tr}_{q^m/q}(\zeta)\ne 0$.

For $\alpha,\beta \in \Fqm$ we define the $\Fq$-linear map
$$\begin{array}{rccl}\psi_{\alpha, \beta}:&\F_{q^{2m}}&\rightarrow &\F_{q^{2m}}\\
& x+\gamma y& \longmapsto & (\beta x^q-\alpha x)+\gamma(\beta y^q-\alpha y -\alpha \zeta x)\end{array}$$
Let $(\delta_1,\ldots,\delta_m)$ be a basis of $\Fqm$ over $\Fq$, and  consider the basis of $\F_{q^{2m}}$ over $\Fq$ given by $(\delta_1,\ldots,\delta_m,$ $\gamma\delta_1,\ldots,\gamma\delta_m)$.  
Since $\psi_{\alpha,\beta}$ restricted to $\Fqm$ has image in $\Fqm$, then the associated matrix is a block triangular matrix.

Now, we want to show that $\mathrm{ker}(\psi_{\alpha,\beta})$ has at most dimension $1$ over $\Fq$. First note that for any $\lambda\in\Fqm^*$ the maps $\psi_{0,\lambda}$ and $\psi_{\lambda,0}$ are invertible. Thus, we can consider $\alpha,\beta\in\Fqm^*$. Moreover, without loss of generality, we can assume that $\alpha=1$.
Under these assumptions,  we have that 
$$x+\gamma y \in \ker(\psi_{1,\beta})\Longleftrightarrow \left\{\begin{array}{lcl} \beta x^q-x&=&0 \\
\beta y^q-y&=&\zeta x
\end{array}\right. .$$

The linear map $\beta x^q-x$ (over $\Fqm$) is invertible if and only if $N_{q^m/q}(\beta)\ne 1$.
%So, if $N_{q^m/q}(\beta)\ne 1$ we have that $\psi_{1,\beta}$ is invertible.
Hence, suppose that $N_{q^m/q}(\beta)=1$. Thus, there exists $\bar \beta\in\Fqm$ such that $\beta=\bar \beta^q/\bar \beta$.
Hence, $\beta x^q-x=0$ implies that $\bar \beta x\in \Fq$. 
Now, from $\beta y^q-y=\zeta x$ we have that $\bar \beta^q y^q-\bar \beta y=\bar \beta\zeta x$, which implies $\mathrm{Tr}_{q^m/q}(\bar \beta\zeta x)=0$. Since $\bar \beta x\in \Fq$ we get $\mathrm{Tr}_{q^m/q}(\bar \beta\zeta x)=\bar \beta x \mathrm{Tr}_{q^m/q}(\zeta)$=0, and this happens if and only if $x=0$.
Therefore, the dimension of $\ker(\psi_{1,\beta})$ is at most $1$.

Now, let $s>2$ and $\{1,\gamma,\ldots,\gamma^{s-1}\}$ be a basis of $\mathbb{F}_{q^{sm}}$ over $\mathbb{F}_{q^{m}}$. Considering $\zeta$ as above, we define the mapping
$$\begin{array}{rccl}\psi_{\alpha, \beta}:&\F_{q^{sm}}&\rightarrow &\F_{q^{sm}}\\
& \sum\limits_{i=0}^{s-1}\gamma^ix_i& \longmapsto & (\beta x_0^q-\alpha x_0)+\sum\limits_{i=1}^{s-1}\gamma^i(\beta x_i^q-\alpha x_i -\alpha \zeta x_{i-1}).
\end{array}$$

As before, we can note that such linear map is given by a block triangular matrix with support in $\mc{T}_{s,m}^B$. 
Also in this case, we need to investigate the kernel of the mappings $\psi_{1,\beta}$'s. So, we have

$$x_0+\gamma x_1+\ldots+\gamma^{s-1}x_{s-1} \in \ker(\psi_{1,\beta})\Longleftrightarrow \left\{\begin{array}{lcl} \beta x_0^q-x_0&=&0 \\
\beta x_1^q-x_1&=&\zeta x_0\\
\vdots&&\\
\beta x_{s-1}^q-x_{s-1}&=&\zeta x_{s-2}
\end{array}\right.$$
From the above arguments, when $N_{q^m/q}(\beta)=1$, we iteratively obtain that $x_0+\gamma x_1+\ldots+\gamma^{s-1}x_{s-1} \in \ker(\psi_{1,\beta})$ if and only if $x_0=x_1=\ldots=x_{s-2}=0$ and $x_{s-1}\in\ker(\beta x^q-x)$. Therefore, $\ker(\psi_{1,\beta})$ has dimension $1$.

Thus, the set $M:=\{\psi_{\alpha,\beta}\,:\,\alpha,\beta\in\Fqm\}$ is an $\Fq$-vector space of dimension $2m$ and any nonzero matrix in this space has rank at least $sm-1$. Since $\nu_{\min}(\mc{T}_{s,m}^B,sm-1)=2m$, we have that $M$ is a $[\mc{T}_{s,m}^B,\nu_{\min}(\mc{T}_{s,m}^B,sm-1),sm-1]_q$ MFD code.
\end{proof}

Lemmas \ref{lem:TsmMFD} and \ref{lem:Tsm-1MFD} can be summarized in the following theorem. 

\begin{theorem}\label{th:triangular_conj}
Let $m,s\ge 2$. Then, for any prime power $q$ there exists a $[\mc{T}_{s,m}^B,\nu_{\min}(\mc{T}_{s,m}^B,d),d]_q$ MFD code for $d\in\{1,\ldots,m+1\}\cup\{sm-1,sm\}$.
\end{theorem}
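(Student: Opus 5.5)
The theorem splits into three ranges of $d$, and I will treat each by pointing to an existing result.

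\emph{Range $d\in\{sm-1,sm\}$.} These two values are exactly Lemma~\ref{lem:Tsm-1MFD} and Lemma~\ref{lem:TsmMFD}. Both lemmas hold for every prime power $q$ and every $m,s\ge 2$, so nothing further is needed here.

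\emph{Range $d=1$.} We have $\nu_{\min}(\mc{T}^B_{s,m},1)=|\mc{T}^B_{s,m}|$. The full space $\mathbb{F}_q^{\mc{T}^B_{s,m}}$ has this dimension and minimum distance at least $1$, so it is an MFD code.

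\emph{Range $2\le d\le m+1$.} Here I follow the observation made just before Lemma~\ref{lem:TsmMFD}.
\begin{itemize}
\item Write $d-1\le m$. The last $d-1$ columns of $\mc{T}^B_{s,m}$ lie in the last block column $C^B_s$.
\item That block column is full, since $\phi_m^{-1}(\mc{T}^B_{s,m})=\mc{T}_s$ has last column of length $s$. So each of these $d-1$ columns contains $sm$ dots, i.e. they are full columns of the $sm\times sm$ grid.
\item The diagram therefore satisfies the hypothesis of \cite[Theorem 2]{bib:etzion1}: when the rightmost $d-1$ columns are full, an MFD code exists over every field. Such a code can be obtained as a subcode of a Gabidulin MRD code in $\mathbb{F}_q^{sm\times sm}$ whose codewords are supported on the diagram.
\item By Proposition~\ref{prop:jsingtriang}, $(\mc{T}^B_{s,m},d)$ is $(d-1)$-Singleton, since $d-1=mj+r$ for a suitable $j$. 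This means deleting the last $d-1$ columns achieves $\nu_{\min}$, so the cited construction reaches the bound. This matches the form in which \cite[Theorem 2]{bib:etzion1} is stated.
\end{itemize}

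The main point to check is the last range. One must verify that the hypotheses of \cite[Theorem 2]{bib:etzion1} apply verbatim: the $d-1$ rightmost columns must have $sm$ dots each, and $d\le m+1$ ensures they stay inside the full last block column. Once that holds, the three ranges together give the stated theorem.
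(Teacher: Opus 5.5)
Your proposal is correct and follows the paper's own argument. The cases $d\in\{sm-1,sm\}$ are exactly Lemmas~\ref{lem:Tsm-1MFD} and~\ref{lem:TsmMFD}. The range $d\le m+1$ is handled, as in the remark preceding those lemmas, by observing that the last $d-1$ columns lie in the full last block column, so \cite[Theorem 2]{bib:etzion1} applies over every field. Your appeal to Proposition~\ref{prop:jsingtriang} is harmless but unnecessary: the MRD-subcode has dimension $\nu_{d-1}(\mc{T}^B_{s,m},d)\ge\nu_{\min}(\mc{T}^B_{s,m},d)$, and Theorem~\ref{th:dimension} then forces equality.
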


As a consequence, from Theorem \ref{theo:tri} and Theorem \ref{th:triangular_conj} we have the following.
\begin{corollary}\label{cor:triangular_construction}
   Let $m,s\ge 2$ and $\mc{F}$ be an $m$-block Ferrers diagram of order $sm$. If $(\mc{F},d)$ is MSRD-constructible, with $d\in\{1,\ldots,m+1\}\cup\{sm-1, sm\}$, then there exists an  $[\mc{F},\nu_{\min}(\mc{F},d), d]_q$ MFD code over any
finite field $\Fq$. 
\end{corollary}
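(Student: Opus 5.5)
The plan is to split according to the value of $d$ and to reduce, whenever possible, to the block triangular case through Theorem~\ref{theo:tri}.

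\emph{Small distances.} For $d\in\{1,\ldots,m+1\}$ I will not use MSRD-constructibility at all.
\begin{itemize}
\item When $d=1$, the whole space $\mathbb F_q^{\mc F}$ is an MFD code, since $\nu_{\min}(\mc F,1)=|\mc F|$.
\item When $2\le d\le m+1$, I invoke the observation preceding Lemma~\ref{lem:TsmMFD}. After passing to the standard version $\mc F_{\mathrm{st}}$, either the last $d-1$ columns or the first $d-1$ rows are full (each contains $sm$ dots). This holds because $d-1\le m$ and the relevant outermost block column or block row is nonempty. Then \cite[Theorem 2]{bib:etzion1} gives an MFD code over every $\Fq$.
\end{itemize}
The point I must check here is that fullness of the first block row or last block column after standardization survives the reordering, so that the result applies with the correct order. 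Empty block columns only shift the diagram, and $\nu_{\min}$ is unchanged under standardization.

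\emph{Large distances.} For $d\in\{sm-1,sm\}$, with $d\ge 2$ since $s,m\ge2$, I apply Theorem~\ref{theo:tri}.
\begin{itemize}
\item Hypothesis (1) is the MSRD-constructibility we are given.
\item If $I_B(\mc F,d)=\emptyset$, the zero code works.
\item Otherwise, for hypothesis (2) we need an MFD code on $\mc T^B_{l,m}$ with $l=\max I_B(\mc F,d)$ and the same distance $d$. Since $i\in I_B$ forces $m|D_i^B\cap\mathbf B(\mc F)|\ge d\ge sm-1>(s-1)m$, we get $|D_i^B\cap\mathbf B(\mc F)|=s$, hence $i=s$ and $l=s$.
\end{itemize}
So the required code lives on $\mc T^B_{s,m}$ with $d\in\{sm-1,sm\}$, and Lemmas~\ref{lem:TsmMFD} and \ref{lem:Tsm-1MFD} (equivalently Theorem~\ref{th:triangular_conj}) provide it over every $\Fq$. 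Theorem~\ref{theo:tri} then yields the MFD code on $\mc F$.

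\emph{Main obstacle.} The delicate step is $l=s$ when $d=sm-1$ and $m=2$. In that case $(s-1)m=2s-2<2s-1=d$ still holds, so the bound $|D_i^B\cap\mathbf B(\mc F)|=s$ remains valid. More generally the inequality needs $m\ge2$, which is part of the hypothesis.

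I would also double-check the small-$d$ case for the adjoint orientation, where the rows rather than the columns are full. This follows by symmetry, since taking adjoints preserves both the rank and $\nu_{\min}$.
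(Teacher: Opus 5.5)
Your proof is correct and in substance follows the paper, which obtains the corollary in one line from Theorem~\ref{theo:tri} and Theorem~\ref{th:triangular_conj}.

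\textbf{Large distances.} For $d\in\{sm-1,sm\}$ your route is the paper's. Your check that every $i\in I_B(\mc F,d)$ has $|D_i^B\cap\mathbf B(\mc F)|=s$, hence $l=\max I_B(\mc F,d)=s$, is exactly what makes Lemmas~\ref{lem:TsmMFD} and~\ref{lem:Tsm-1MFD} applicable, and the paper leaves it implicit. In fact, once $l=s$, $D_s^B$ is full, so $\mc T^B_{s,m}\subseteq\mc F$ with the same $\nu_{\min}$, and Lemma~\ref{lem:incldots}(1) alone already finishes.

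\textbf{Small distances.} For $d\le m+1$ you differ slightly. The paper still passes through Theorem~\ref{theo:tri} and uses Theorem~\ref{th:triangular_conj} on $\mc T^B_{l,m}$, whose small-distance part is itself the Etzion--Silberstein full-rows/full-columns observation. You instead apply that observation directly to $\mc F$. This makes clear that MSRD-constructibility plays no role in this range, and it avoids dealing with $\mc T^B_{l,m}$ for small $l$.

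\textbf{Points to tidy.}
\begin{itemize}
\item After standardization, the full rows (or columns) contain $o(\mc F)$ dots, not $sm$; the paper has the same imprecision.
\item The right justification for fullness is this: the row or column realizing $o(\mc F)$ is full in $\mc F_{\mathrm{st}}$, and the block structure makes the first $m$ rows (respectively, the last $m$ columns) identical to it. Saying the block row or column is ``nonempty'' is not enough.
\item If $d>o(\mc F)$, which happens only when $\mc F=\emptyset$ or when $o(\mc F)=m$ and $d=m+1$, the criterion does not apply. In that case at most $d-1$ rows are nonempty, so $\nu_{\min}(\mc F,d)=0$ and the zero code suffices.
\end{itemize}
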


\bibliographystyle{abbrv}
\bibliography{biblio.bib}

\end{document}